\newcommand{\AY}[1]{{#1}}
\newcommand{\GG}[1]{{#1}}
\newcommand{\2}{\vspace{0.15cm}}
\newcommand{\smallQED}{\hfill {\tiny ($\Box$)}}
\newcommand{\YC}[1]{{#1}}
\newtheorem{theorem}{Theorem}[section]
\newtheorem{lemma}[theorem]{Lemma}
\newtheorem{corollary}[theorem]{Corollary}
\newtheorem{remark1}[theorem]{Remark}
\newtheorem{conjecture}[theorem]{Conjecture}
\newtheorem{claim}[theorem]{Claim}
\newtheorem{observation}[theorem]{Observation}
\begin{document}
	
	\title{Lower Bounds for Maximum Weight Bisections of Graphs with Bounded Degrees}
	\author{
	Stefanie Gerke\thanks{Department of Mathematics. Royal Holloway University of London.  {\tt stefanie.gerke@rhul.ac.uk}.} \hspace{2mm} Gregory Gutin\thanks{Department of Computer Science. Royal Holloway University of London. {\tt g.gutin@rhul.ac.uk}.} \hspace{2mm} Anders Yeo\thanks {Department of Mathematics and Computer Science, University of Southern Denmark. {\tt andersyeo@gmail.com}, and Department of Mathematics, University of Johannesburg, Auckland Park, 2006 South Africa.} \hspace{2mm} Yacong Zhou\thanks{Department of Computer Science. Royal Holloway University of London. {\tt Yacong.Zhou.2021@live.rhul.ac.uk}.} }
	\date{}
	\maketitle
	\begin{abstract}
A bisection in a graph is a cut in which the number of vertices in the two parts differ by at most 1.
In this paper, we give lower bounds for the maximum weight of bisections of edge-weighted graphs with bounded maximum degree. Our results improve a bound of Lee, Loh, and Sudakov (J. Comb. Th. Ser. B 103 (2013)) for (unweighted) maximum bisections in graphs whose maximum degree is either even or equals 3, and for almost all graphs. We show that a tight lower bound for maximum size of bisections in 3-regular graphs obtained by Bollob\'as and Scott (J. Graph Th. 46 (2004)) can be extended to weighted subcubic graphs. We also consider edge-weighted triangle-free subcubic graphs and show that a much better lower bound (than for edge-weighted subcubic graphs) holds for such graphs especially if we exclude $K_{1,3}$. We pose three conjectures. 
	\end{abstract}

	\section{Introduction}
In this paper, we consider {\em edge-weighted} (or, {\em weighted}) graphs $G=(V,E,w)$, where the weight $w(e)$ of each edge $e\in E$ is a non-negative real number. Given a graph $G=(V,E)$ a \emph{cut} of $G$ is the edge set between a non-empty set $A\not= V$ and $V\setminus A$. The sets $A$ and $V\setminus A$ are {\em partition classes} (or, {\em parts}). 
A {\em bisection} is a cut in which the sizes of its parts differ by at most 1.

Several researchers \cite{AGN1993,LZ1982,Locke1982} have shown that if the chromatic number of $G$ is $\chi$, then it admits a bipartition of size at least $\frac{\chi+1}{2\chi}|E(G)|$ when $\chi$ is odd and $\frac{\chi}{2(\chi-1)}|E(G)|$ when $\chi$ is even. Thus, it is implied by Brook's Theorem that if the maximum degree $\Delta(G)$ of a graph $G$ is at most $k$, then for odd $k$, every graph with maximum degree at most $k$ has a cut with at least $\frac{k+1}{2k}|E(G)|$ edges, and for even $k$, every graph with maximum degree at most $k$ has a cut with $\frac{k+2}{2(k+1)}|E(G)|$ edges. 
 
 These bounds are tight as the maximum cuts of complete graphs have this size. Note that the bound in the odd case implies the bound in the even case as every graph with maximum degree at most $k$ also has maximum degree at most $k+1$.

This result can be easily generalized to edge-weighted graphs $G$ 
by replacing the number of edges  $|E(G)|$ with the total weight  $w(G)$ of the edges. We conjecture that the same bound holds not only for a maximum weight cut but also for a maximum weight  bisection.
	\begin{conjecture}\label{cj:1}
		Let $k$ be a positive integer and let $G$ be a graph with maximum degree $\Delta(G)\leq k$. If $k$ is odd  then $G$ has a bisection of weight at least $\frac{k+1}{2k}w(G)$, and if $k$ is even then $G$ has a bisection of weight at least $\frac{k+2}{2(k+1)}w(G)$. 
	\end{conjecture}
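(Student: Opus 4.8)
The plan is to decouple the two demands of the conjecture — obtaining cut weight at least the stated fraction of $w(G)$, and obtaining near-equal parts — by first producing a heavy (but possibly unbalanced) cut from a vertex colouring, and then repairing the balance by local vertex moves whose cost is charged against the slack in the weight bound. By Brooks' theorem every component of $G$ has chromatic number at most $k+1$, with equality only for complete graphs and (when $k=2$) odd cycles; such components have optimal bisections that already meet the bound and are automatically balanced, so I would set them aside and assume a proper colouring with classes $V_1,\dots,V_\chi$ where $\chi\le k$ for odd $k$ and $\chi\le k+1$ for even $k$. Contracting each class to a single vertex yields a weighted graph $H$ on $\chi$ vertices with $w(H)=w(G)$, every edge of $G$ surviving since its endpoints receive different colours.

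The weight bound then comes from averaging: a uniformly random split of the $\chi$ classes into groups of sizes $\lceil\chi/2\rceil$ and $\lfloor\chi/2\rfloor$ cuts a fixed edge of $H$ with probability $\frac{\chi+1}{2\chi}$ when $\chi$ is odd and $\frac{\chi}{2(\chi-1)}$ when $\chi$ is even. Both fractions decrease in $\chi$, and a short check shows each is at least the target fraction throughout the stated range, so the expected cut weight is at least $\frac{k+1}{2k}w(G)$ for odd $k$, respectively $\frac{k+2}{2(k+1)}w(G)$ for even $k$; averaging fixes a colour-respecting cut $(A_0,B_0)$ meeting the bound. Now suppose $|A_0|-|B_0|=2s>0$. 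I would move $s$ vertices from $A_0$ to $B_0$, each chosen to maximise $d_{A_0}(v)-d_{B_0}(v)$ (the weighted degrees of $v$ into the two sides), since moving $v$ changes the cut by exactly this amount; the aim is to show the cumulative loss over the $s$ moves stays within the gap between the cut weight obtained and the threshold. To create usable slack one can avoid worst-case averaging and instead run a single random experiment tuned to penalise imbalance — maximising, say, expected cut weight minus a multiple of the expected size difference — so that the selected cut is both heavy and nearly balanced before any repair.

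The main obstacle is exactly this balancing step, and it is the reason the statement is only conjectured. The contraction/averaging argument is blind to the sizes of the colour classes, so a graph with one dominant class can force $(A_0,B_0)$ to be arbitrarily lopsided and $s$ to be large. The extremal graphs (disjoint unions of complete graphs) have singleton class structure and are balanced for free, which is encouraging; but this shows only that the hard instances are bounded away from the extremal ones and hence carry \emph{some} slack, not that the slack is quantitatively enough to pay for $s$ repair moves of bounded cost. Resolving this seems to require either a colouring whose classes are size-balanced, so that some split is automatically a near-bisection, or abandoning contraction in favour of a direct local analysis of a maximum-weight bisection, in which the inequalities asserting that no swap of $u\in A$ with $v\in B$ increases the cut are summed against the degree bound. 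The latter route trades the clean probability computation for a delicate accounting of swap inequalities under the balance constraint, and forcing that accounting to match the complete-graph extremal ratio $\frac{k+1}{2k}$ is, I expect, where the genuine difficulty lies.
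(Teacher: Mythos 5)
You are attempting to prove a statement that the paper itself leaves as a conjecture: the paper settles only the even case, Class 1 graphs, almost all graphs, and the odd case $k=3$, while the general odd case remains open there as well. Judged on those terms, your proposal is accurate about where it stands. The cut-weight half of your argument is correct: Brooks' theorem, contraction of colour classes, and the observation that a uniformly random near-equal split of the $\chi$ classes cuts every edge with probability exactly $\frac{\chi+1}{2\chi}$ ($\chi$ odd) or $\frac{\chi}{2(\chi-1)}$ ($\chi$ even), both of which dominate the target ratio in the stated range. And you correctly identify that the entire difficulty is the balancing step: a proper colouring can have wildly unequal classes, so the split $(A_0,B_0)$ can be arbitrarily lopsided, and neither your greedy repair (charging $s$ moves against unquantified slack) nor the "penalised random experiment" is carried out. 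Your two proposed escape routes do not close this gap either: size-balanced colour classes is precisely the equitable-colouring argument of Lee, Loh and Sudakov, which is known to cost an additive $\frac{k(k+1)}{4}$ (resp. $\frac{k(k+2)}{4}$), and the swap-inequality analysis of a maximum bisection has no known accounting that reaches the complete-graph ratio.

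What your analysis misses --- and it is the key idea behind every case the paper does settle --- is a mechanism that dissolves the balance problem instead of paying for it: colour \emph{edges}, not vertices. Lemma~\ref{lem:bbsg} shows that if $B$ is a vertex-disjoint union of bipartite subgraphs whose partite sets $(X_i,Y_i)$ are independent in $G$ and satisfy $|X_i|=|Y_i|$, then flipping each pair $(X_i,Y_i)$ with an independent fair coin always yields a genuine bisection (balance is automatic, pair by pair) in which every edge of $B$ is cut with probability $1$ and every other edge with probability at least $\tfrac12$; hence $G$ has a bisection of weight at least $\frac{w(G)+w(B)}{2}$. A matching is such a $B$, so Vizing's theorem ($\chi'(G)\le k+1$) supplies a matching of weight at least $w(G)/(k+1)$, and Lemma~\ref{lem:bbsg} immediately gives a bisection of weight at least $\frac{k+2}{2(k+1)}w(G)$: the even case of the conjecture in full (Corollary~\ref{thm:even}), with no repair step and no additive loss. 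The same pairing argument with $\chi'(G)=\Delta(G)$ handles Class 1 graphs (Theorem~\ref{thm:chi}) and hence almost all graphs, and, combined with the $3$-bisection and forest machinery (Lemmas~\ref{lem:3-bisection}--\ref{lem:cubic}), the odd case $k=3$. The productive reformulation of the open difficulty is therefore not "how do I rebalance a colour-respecting cut" but "how do I find a member of $\mathcal{B}_b(G)$ of weight at least $w(G)/k$ when $k$ is odd"; note that the paper's Remark (Fig.~\ref{fig:bipartite}) exhibits Class 2 graphs whose maximum matchings are strictly lighter than $w(G)/k$, so for odd $k\ge 5$ such a structure cannot be just a matching, which is exactly why the conjecture remains open.
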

	As far as we know there are no publications on  bounds for the maximum  bisections of weighted graphs. However, there are some results related to our conjecture for the unweighted case. In \cite{LLS}, Lee, Loh, and Sudakov showed the following result by using equitable colourings. 
	\begin{theorem} \cite{LLS}
		 If $G$ has maximum degree at most $k$, then there exists a bisection of size at least $\frac{k+1}{2k}|E(G)|-\frac{k(k+1)}{4}$
		if $k$ is odd and $\frac{k+2}{2(k+1)}|E(G)|-\frac{k(k+2)}{4}$ if  $k$ is even.
	\end{theorem}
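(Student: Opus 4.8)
The plan is to combine an equitable colouring with a probabilistic balanced partition of the colour classes, and then to repair the vertex-balance so as to obtain a genuine bisection while paying only an additive $O(k^2)$ penalty. Put $r=k+1$. Since $\Delta(G)\le k$, the Hajnal--Szemer\'edi theorem supplies an equitable colouring $V(G)=V_1\cup\cdots\cup V_r$; thus each $V_i$ is an independent set and the class sizes pairwise differ by at most $1$. Because the colouring is proper, every edge of $G$ joins two distinct classes, so I pass to the quotient multigraph $H$ on vertex set $[r]$ in which the multiplicity of $\{i,j\}$ is the number of $G$-edges between $V_i$ and $V_j$. Any $2$-colouring $[r]=S\cup\bar S$ induces a colour-respecting cut $(\bigcup_{i\in S}V_i,\bigcup_{i\in\bar S}V_i)$ of $G$ whose size equals the size of the cut $(S,\bar S)$ of $H$, and $|E(H)|=|E(G)|$.

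First I would extract a large colour-respecting cut. Choose $(S,\bar S)$ uniformly at random among the balanced $2$-colourings of $[r]$, i.e.\ with $|S|=\lceil r/2\rceil$ and $|\bar S|=\lfloor r/2\rfloor$. A direct computation shows that a fixed edge $\{i,j\}$ of $H$ is cut with probability exactly the maximum-cut density of $K_r$, namely $\frac{r+1}{2r}$ when $r$ is odd and $\frac{r}{2(r-1)}$ when $r$ is even. Hence the expected cut size is this density times $|E(G)|$, and some balanced colour-partition attains at least this value. With $r=k+1$ these densities are precisely $\frac{k+1}{2k}$ (for odd $k$, when $r$ is even) and $\frac{k+2}{2(k+1)}$ (for even $k$, when $r$ is odd), producing the leading terms of the theorem.

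It remains to convert this cut into a bisection, and equitability is what keeps the cost small. When $r$ is even I would keep the whole-class partition: any balanced colour-partition is already vertex-balanced up to $|S|$ classes of the larger size, so its vertex-imbalance is at most $r/2=\frac{k+1}{2}$, and relocating at most $\lfloor r/4\rfloor\approx\frac{k+1}{4}$ vertices (each chosen to lose at most its degree $\le k$) reaches $\bigl||A|-|B|\bigr|\le 1$; this already yields the additive term $\frac{k(k+1)}{4}$. When $r$ is odd the $(\lceil r/2\rceil,\lfloor r/2\rfloor)$ split is vertex-imbalanced by roughly one whole class, so instead I would reserve the class $V_*$ carrying the fewest incident edges (at most $2|E|/r$ of them), take a balanced $(s,s)$ split of the remaining $2s$ classes, and split $V_*$ itself evenly between the two sides. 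Since $V_*$ is independent, splitting it creates no new within-part edges, and the small-incidence choice of $V_*$ makes the expected cut of this partition at least $\frac{r+1}{2r}|E(G)|$; only the residual $O(k)$ vertex-imbalance then has to be corrected by relocations, giving the term $\frac{k(k+2)}{4}$.

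The main obstacle is this final accounting. The probabilistic step only guarantees a large cut for a random balanced (class-)partition, whereas the vertex-balance repair must be carried out on that very partition without erasing the gain; the delicate points are (i) certifying that only $O(k)$ relocations are needed--this is exactly where equitability, and in the odd case the freedom to split the lightest independent class, are essential--and (ii) bounding the accumulated loss tightly enough to land on the exact constants $\frac{k(k+1)}{4}$ and $\frac{k(k+2)}{4}$ rather than a crude $O(k^2)$. I expect the cleanest route is to analyse the random balanced partition together with the even split of $V_*$ \emph{simultaneously}, so that a single expectation computation already bounds the bisection size from below and the relocation argument is only needed for the $O(1)$ parity corrections.
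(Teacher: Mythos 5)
The paper does not actually prove this theorem; it quotes it from Lee, Loh, and Sudakov and notes that their proof uses equitable colourings, which is precisely your route: a Hajnal--Szemer\'edi equitable colouring into $k+1$ classes, a uniformly random balanced assignment of whole classes to the two sides (splitting an independent class $V_*$ of minimum edge-incidence evenly between the sides when $k+1$ is odd), and a repair of the leftover imbalance costing at most $k$ cut edges per moved vertex. The details you flagged as delicate do check out: the expected cut with $V_*$ split evenly is at least $\frac{k+2}{2(k+1)}|E(G)|$ exactly, because $V_*$ meets at most $2|E(G)|/(k+1)$ edges and the remaining $k$ classes are cut at density $\frac{k}{2(k-1)}$; and since the vertex imbalance is at most $\frac{k+1}{2}$ (odd $k$) or $\frac{k+2}{2}$ (even $k$) and each relocation reduces it by $2$, the repair loses at most $k\left\lceil\frac{k-1}{4}\right\rceil\le\frac{k(k+1)}{4}$, respectively $k\left\lceil\frac{k}{4}\right\rceil\le\frac{k(k+2)}{4}$, edges.
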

	One can generalize this result easily to weighted graphs using an argument similar to that in \cite{LLS}. In Section \ref{sec:basic}, we will improve the lower bound of the above result by showing that Conjecture \ref{cj:1} holds when $k$ is even.   It turns out that the even case of Conjecture \ref{cj:1} is relatively easy, but the odd case is quite hard. In Section \ref{sec:basic}, we prove a lemma modifying a generic lower bound for maximum weighted cuts obtained in \cite{GY} to maximum weighted bisections. This lemma and Vizing's theorem (see \cite{SSTF,Vizing}) are used in proofs of various results of this paper. 

Bollob\'as and Scott \cite{BS2004} showed the following result, which means Conjecture \ref{cj:1} holds for unweighted regular graphs.
   \begin{theorem}\label{thm:BS} \cite{BS2004}
   	Let $k$ be an odd positive integer. If $G$ is $k$-regular, then there exists a bisection of size at least $\frac{k+1}{2k}|E(G)|$.
   \end{theorem}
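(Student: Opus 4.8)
The plan is to reduce the statement to producing a \emph{balanced} cut that is locally unfriendly, and then to obtain such a cut by starting from a maximum cut and rebalancing it while controlling the loss. First I would record two elementary reductions. Since $G$ is $k$-regular with $k$ odd, the handshake identity $2|E(G)|=kn$ forces $n:=|V(G)|$ to be even, so a bisection here means a partition into two parts of size exactly $n/2$. Writing $C(A,B)$ for the size of the cut and $e(A),e(B)$ for the numbers of edges inside the parts, we have $C=|E(G)|-e(A)-e(B)=\tfrac{kn}{2}-e(A)-e(B)$; hence the target $C\ge\frac{k+1}{2k}|E(G)|=\frac{(k+1)n}{4}$ is equivalent to $e(A)+e(B)\le\frac{(k-1)n}{4}$.

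Next I would exploit a maximum cut $(A,B)$. By local optimality no single vertex wants to switch sides, so every vertex $v$ has at most as many neighbours on its own side as across; since $k$ is odd this means $d_{\mathrm{in}}(v)\le\frac{k-1}{2}$ for all $v$, and therefore $e(A)+e(B)=\tfrac12\sum_v d_{\mathrm{in}}(v)\le\frac{(k-1)n}{4}$. Thus a maximum cut already meets the bound, and the only issue is balance. Writing $\delta(v)=d_{\mathrm{out}}(v)-d_{\mathrm{in}}(v)\ge 1$ (an odd number), one computes $C=\frac{kn+\sum_v\delta(v)}{4}$, so the \emph{slack} $s:=C-\frac{(k+1)n}{4}=\frac14\sum_v(\delta(v)-1)$ measures the excess over the target. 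A short calculation with the same quantities gives $k(|A|-|B|)=\Sigma_B-\Sigma_A$, where $\Sigma_A=\sum_{v\in A}\delta(v)$ and $\Sigma_B=\sum_{v\in B}\delta(v)$; in particular, if every vertex is tight ($\delta\equiv 1$) then $|A|=|B|$ and the maximum cut is already a bisection of exactly the required size.

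It remains to handle the case $|A|>|B|$, and this is where the real work lies. The idea is to transfer a set $S\subseteq A$ with $|S|=\tfrac12(|A|-|B|)$ to $B$; moving $S$ changes the cut by exactly $-\bigl(\sum_{v\in S}\delta(v)+2e(S)\bigr)$, so choosing $S$ to be an \emph{independent} set of low-excess vertices makes the loss equal to $\sum_{v\in S}\delta(v)$. The main obstacle is then to prove that this loss does not exceed the slack $s$. I would attack this by a double-counting argument: the identities above express the number $\tfrac12(|A|-|B|)=\frac{\Sigma_B-\Sigma_A}{2k}$ of vertices that must be moved in terms of $\Sigma_A,\Sigma_B$, while averaging shows that the smallest excesses in $A$ have mean at most $\Sigma_A/|A|$; combining these with $s=\frac{\Sigma_A+\Sigma_B-n}{4}$ should yield loss $\le s$, and hence a bisection of size at least $\frac{(k+1)n}{4}$. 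Two technical points would need care: extracting an independent transfer set concentrated on tight vertices (here a proper edge-colouring supplied by Vizing's theorem, or a greedy/matching argument inside $G[A]$, lets one keep $e(S)$ under control), and, should a single simultaneous transfer prove too lossy, replacing it by an amortised sequence of moves governed by a potential that tracks $C$ against the current imbalance. I expect this balancing step, namely guaranteeing that the slack always pays for the rebalancing, to be the crux of the whole proof.
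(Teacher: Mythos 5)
Your reductions and identities are all correct: $n$ must be even; the target is equivalent to $e(A)+e(B)\le\frac{(k-1)n}{4}$; any locally optimal cut satisfies $d_{\mathrm{in}}(v)\le\frac{k-1}{2}$ and hence already meets the size bound; the slack identity $s=\frac14\sum_v(\delta(v)-1)$, the identity $k(|A|-|B|)=\Sigma_B-\Sigma_A$, and the loss formula $\sum_{v\in S}\delta(v)+2e(S)$ all check out. The gap sits exactly at the step you yourself flag as the crux, and it is not a repairable technicality in the form you propose: you need an \emph{independent} set $S\subseteq A$ of size $t=\tfrac12(|A|-|B|)$ with $\sum_{v\in S}\delta(v)\le s$, but for a locally optimal cut $\delta(v)=k-2d_{G[A]}(v)$, so the cheap vertices are precisely the vertices of maximum degree in $G[A]$ --- the worst possible candidates for an independent set --- and your averaging bound is only valid when independence is ignored. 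Concretely, let $A$ be a disjoint union of cliques of order $\frac{k+1}{2}$ in which every vertex has $\delta=1$, and let $B$ be an independent set with all neighbours in $A$: for $k=9$ take nine copies of $K_5$ (so $|A|=45$) joined by a $5$-regular/$9$-regular bipartite graph to an independent set $B$ with $|B|=25$. This cut is locally optimal (local optimality is the only property of maximality your argument uses), and $t=10$, yet the largest independent set in $G[A]$ has only $9$ vertices, so the proposed transfer set does not exist at all; neither Vizing's theorem (which yields matchings, not independent sets) nor greedy extraction can change this. What does work in that configuration is moving two \emph{whole} cliques: the loss is $2\cdot\frac{(k+1)^2}{4}=50=s$, producing a bisection of size exactly $\frac{(k+1)n}{4}$. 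This shows the correct transfer sets are sometimes badly non-independent, with the $2e(S)$ penalty compensated by all $\delta$'s being $1$, so the rebalancing step has to be rebuilt around that trade-off (or around genuine maximality rather than local optimality); as written, the crux of your proof is open.

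On the comparison you were asked to make: the paper does not prove this theorem at all --- it is quoted from Bollob\'as and Scott \cite{BS2004} --- and the closest argument the paper contains is for the weighted subcubic case ($k=3$) in Section \ref{sec:subcubic}, by a completely different route: a 3-bisection of a cubic multigraph (Lemma \ref{lem:3-bisection}) is upgraded to a maximum bisection whose parts induce forests (Lemma \ref{lem:cubic}), those forests are packed into a member of $\mathcal{B}_b(G)$ (Lemma \ref{lem:forest}), and the pairing argument (Lemma \ref{lem:bbsg}) converts this into the bound. So even with the rebalancing step repaired, yours would be an independent proof of the cited result rather than a variant of anything in the paper.
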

 A 3-regular graph is called {\em cubic} and a graph of maximum degree at most 3 {\em subcubic}.
   In Section \ref{sec:subcubic}, we will show that Conjeture \ref{cj:1} holds for weighted subcubic graphs, which generalizes the cubic case of Theorem \ref{thm:BS}. We will also show that the conjecture holds for almost all graphs. 
	
Due to the above-mentioned lower bound by  Bollob\'as and Scott \cite{BS2002}, $\frac{2}{3}|E(G)|$ is a tight lower bound for the maximum size of a cut in a subcubic graph. However, Bondy and Locke \cite{BL1986} showed that this bound can be improved for triangle-free subcubic graphs as follows.
\begin{theorem}\label{thm:BL}\cite{BL1986}
		Every triangle-free subcubic graph $G=(V,E)$ has a cut with weight at least $\frac{4}{5}|E(G)|$. 
	\end{theorem}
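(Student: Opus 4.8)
The plan is to analyze a maximum cut directly and exploit triangle-freeness through a local exchange argument. Fix a cut $(A,B)$ of maximum size and let $C$ and $I$ denote the sets of crossing and internal (uncut) edges, so that $|C|+|I|=|E|$ and the claim $|C|\ge\frac45|E|$ is equivalent to $|C|\ge 4|I|$. The first step is the standard local-optimality observation: moving a vertex $v$ across the cut changes the cut size by (internal degree of $v$) minus (crossing degree of $v$), so maximality forces every vertex to have at least half of its edges crossing. In a subcubic graph this means every vertex is incident to at most one internal edge, hence $I$ is a matching $M$, with $|I|=|M|=:m$, and every degree-$3$ endpoint of $M$ sends exactly two edges across the cut.

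The second step is a charging identity. Let $X$ be the set of the $2m$ endpoints of $M$, and classify each crossing edge by how many of its endpoints lie in $X$: let $t_0,t_1,t_2$ count the crossing edges with $0,1,2$ endpoints in $X$. Counting incidences between $X$ and $C$ gives $\sum_{v\in X}c(v)=2t_2+t_1$, where $c(v)$ is the crossing degree of $v$, and therefore
\[
|C|=t_0+t_1+t_2=\sum_{v\in X}c(v)-t_2+t_0 .
\]
Triangle-freeness enters here: if $uv\in M$ then $u$ and $v$ have no common neighbour, so the (up to four) edges leaving $\{u,v\}$ into the opposite side are genuinely distinct, which is what makes the incidence count $\sum_{v\in X}c(v)$ accurate and, in the cubic case, equal to $4m$. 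Thus it remains to produce enough type-$0$ crossing edges: concretely, to establish $|C|\ge 4m$ it suffices to show that $t_0\ge 4m-\sum_{v\in X}c(v)+t_2$, the right-hand side measuring exactly the deficiency coming from degree-$2$ endpoints of $M$ together with crossing edges whose both endpoints are matched.

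The heart of the proof, and the step I expect to be the main obstacle, is controlling this deficiency by a switching argument. I would choose the maximum cut extremally, minimising a secondary potential (for instance the number $t_2$ of doubly-matched crossing edges, or lexicographically the multiset of crossing degrees). Given a crossing edge $uu'$ with $u,u'\in X$ lying on opposite sides, one simultaneously moves $u$ and $u'$ to the other side; a direct check shows this preserves the cut size, and triangle-freeness guarantees the relevant neighbours are distinct so that the move is clean and strictly decreases the chosen potential, contradicting extremality. The delicate part is verifying that such a neutral swap always strictly improves the potential and that the degree-$2$ endpoints cannot conspire to violate the inequality except in the extremal configuration, which one expects to be governed by disjoint $5$-cycles --- the graph $C_5$, with maximum cut $4=\frac45\cdot 5$, being the tight example. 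An alternative, and possibly cleaner, route is induction on $|E|$: delete a vertex or contract a suitable short path, apply the bound to the smaller triangle-free subcubic graph, and reinsert while accounting for the removed edges; here the obstacle shifts to ensuring that the reduction keeps the graph triangle-free and subcubic and that the reinsertion loses at most the allowed $\frac15$ fraction on the deleted edges.
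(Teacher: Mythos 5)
Your setup is fine as far as it goes: for a maximum cut the internal edges form a matching $M$ of size $m$, and with $X=V(M)$, $t_0,t_1,t_2$ the crossing edges having $0,1,2$ endpoints in $X$, one gets $|C|=t_0+\sum_{v\in X}c(v)-t_2$, so that in the cubic case the theorem reduces to $t_0\ge t_2$, and in the general subcubic case to $t_0\ge t_2+d_2$, where $d_2$ is the number of degree-$2$ endpoints of $M$. (One quibble: triangle-freeness plays no role in making this incidence count ``accurate''---the count is exact for any graph; its real use is in the step you defer.) But that deferred step is not a technicality: it \emph{is} the theorem. Everything you prove holds verbatim for arbitrary subcubic graphs, where the correct bound is only $\frac{2}{3}|E|$ (witness $K_4$), so any completion of your argument must use triangle-freeness in an essential, global way, and your proposal never does so.

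Concretely, the switching you describe does not close the gap. Take a crossing edge $uu'$ with $u,u'\in X$ and move both endpoints across: the cut size is indeed preserved, and local maximality of the new cut forces the new internal edges $uw$, $u'w'$ to extend a matching, but there is no argument that $t_2$ (or any named potential) strictly decreases---the new matched vertices $w,w'$ may themselves be endpoints of crossing edges into $X$, so $t_2$ can stay constant or grow, and neutral swaps can cycle. Since the inequality $t_0\ge t_2+d_2$ is \emph{tight} for $C_5$ and for the Petersen graph, no strict-decrease argument of this simple form can exist near the extremal configurations; one needs the much more careful structural analysis of Bondy and Locke (their proof proceeds by an involved induction with local modifications along paths, not a single exchange rule). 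Your fallback suggestion of ``induction by deleting a vertex or contracting a path'' has the same status: the reinsertion step, where at most a $\frac15$ fraction of the touched edges may be lost, is exactly where the difficulty lives, and it is left entirely unaddressed. So the proposal is an honest reduction plus a statement of the hard lemma, not a proof.
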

	It was conjectured by Gutin and Yeo \cite{GY} that the same proportion ($\frac{4}{5}$) is also true for cuts in weighted graphs, and one of the main results in \cite{GY} is the following:
	
	\begin{theorem}\label{thm:GY}\cite{GY}
		Every triangle-free subcubic weighted graph $G=(V,E,w)$ has a cut with weight at least $\frac{8}{11}w(G)$. 
	\end{theorem}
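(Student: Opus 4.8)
The plan is to fix a cut $(A,B)$ of maximum weight and bound the total weight $M$ of its \emph{monochromatic} edges (those with both endpoints on the same side); since the cut then has weight $w(G)-M$, the desired inequality $w(G)-M \ge \tfrac{8}{11}w(G)$ is equivalent to $M \le \tfrac{3}{11}w(G)$, i.e.\ to $w(G)-M \ge \tfrac{8}{3}M$. So it suffices to show that the cut weight is at least $\tfrac{8}{3}$ times the monochromatic weight.

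First I would extract the structure forced by maximality. For each vertex $v$, moving $v$ to the other side changes the cut by $d_{\mathrm{same}}(v)-d_{\mathrm{opp}}(v)$, where $d_{\mathrm{same}}(v)$ and $d_{\mathrm{opp}}(v)$ are the weighted degrees of $v$ to its own side and to the opposite side; maximality gives $d_{\mathrm{same}}(v)\le d_{\mathrm{opp}}(v)$ for every $v$. Because $G$ is subcubic and weights are positive, this immediately rules out a vertex incident with three monochromatic edges, and, for a degree-$2$ vertex, with two monochromatic edges. Hence the monochromatic subgraph $H$ has maximum degree at most $2$, so $H$ is a disjoint union of paths and cycles; moreover, since adjacent monochromatic edges share a side, every component of $H$ lies entirely inside $A$ or entirely inside $B$. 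Triangle-freeness then guarantees that the two endpoints of any monochromatic edge have no common neighbour, and that $H$ has no short cycles.

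Next I would set up a discharging argument: I assign to the cut edges the task of paying for nearby monochromatic weight, using that at every vertex of $H$ the incident cut weight is at least the incident monochromatic weight (a restatement of $d_{\mathrm{same}}\le d_{\mathrm{opp}}$). Triangle-freeness ensures that the cut edges available at the two ends of a monochromatic edge are distinct, so the payments do not trivially collide. The delicate point is that a degree-$3$ vertex carrying two monochromatic edges has only a single cut edge, which must then absorb the charge of both; one therefore has to track the global structure of the monochromatic paths and cycles and show that no cut edge is over-charged from both of its endpoints at once.

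The main obstacle is exactly this constant. The naive vertex-by-vertex charging, summed over $H$, only yields cut weight $\ge$ monochromatic weight (ratio $\tfrac12$), because a cut edge with both endpoints in $H$ is double-counted; pushing the ratio up to $\tfrac83$ requires a genuinely finer, triangle-free-specific analysis. I would carry this out by classifying the local configurations around each monochromatic edge according to the degrees and the side-patterns of the neighbours, encoding the maximality inequalities and the triangle-free disjointness as linear constraints, and verifying that the worst case yields the factor $\tfrac{8}{11}$ --- equivalently, by invoking the generic per-vertex lower bound for weighted max-cut developed in the same framework and checking that it evaluates to at least $\tfrac{8}{11}w(G)$ on triangle-free subcubic graphs. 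Pinning down the extremal configuration that fixes the constant $\tfrac{8}{11}$ (rather than Bondy--Locke's $\tfrac45$) is precisely where the weighted case departs from the unweighted one.
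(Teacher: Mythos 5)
Your proposal has a genuine gap at exactly the step you defer. You reduce the theorem to showing that a maximum cut satisfies $\mathrm{cut} \ge \tfrac{8}{3}\cdot(\text{monochromatic weight})$, but the only consequences of maximality that you actually extract — the single-vertex switching inequalities $d_{\mathrm{same}}(v)\le d_{\mathrm{opp}}(v)$, the resulting degree-$\le 2$ structure of the monochromatic subgraph $H$, and the triangle-free disjointness of the cut edges at the two ends of a monochromatic edge — are provably too weak to certify any ratio above $1$, i.e.\ anything beyond the trivial $\tfrac12 w(G)$ bound. Concretely, take the $4$-cycle $v_1v_2v_3v_4v_1$ with $w(v_1v_2)=w(v_3v_4)=1$ and $w(v_2v_3)=w(v_4v_1)=1+\varepsilon$, partitioned as $(\{v_1,v_2\},\{v_3,v_4\})$: every vertex strictly satisfies $d_{\mathrm{same}}(v)<d_{\mathrm{opp}}(v)$, $H$ is a union of two edges each inside one side, the endpoints of each monochromatic edge have no common neighbour, yet $\mathrm{cut}/\mathrm{mono}=1+\varepsilon\ll \tfrac83$ and the cut weight is about $\tfrac12 w(G)$. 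This partition is of course not a maximum cut, but that is precisely the point: any discharging or linear-programming argument that uses only your listed local constraints admits this configuration, so it cannot prove the bound. Global maximality would have to enter through multi-vertex switches (whole segments of the paths/cycles of $H$, as in Bondy--Locke), and you give no mechanism for this; that is exactly the part that is known not to survive the passage to weighted graphs — which is why \cite{GY} obtain only $\tfrac{8}{11}$ and leave the weighted analogue of Theorem \ref{thm:BL} as a conjecture. For the same reason, your plan to ``pin down the extremal configuration that fixes $8/11$'' is misdirected: $\tfrac{8}{11}$ is not believed to be extremal (the conjectured truth is $\tfrac45$), so no local worst-case analysis should be expected to terminate at that constant.

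For comparison: this theorem is not proved in the present paper at all — it is quoted from \cite{GY} — and the method there is visible in the framework reproduced here. One does not analyse a maximum cut; one \emph{constructs} a subgraph $B\in\mathcal{B}(G)$ whose components are induced and bipartite, and then applies the averaging bound of Lemma \ref{lem:1}, $\mathrm{cut}\ge \tfrac12\bigl(w(G)+w(B)\bigr)$. The heavy $B$ is obtained along the lines of Lemma \ref{lem:triangle-free cubic}: take a maximum-weight matching $M$, contract it, apply Vizing's theorem to $G/M$ to find a heavy matching there, and expand its edges back into induced $4$-vertex bipartite pieces (this is where triangle-freeness is used), yielding $w(B)\ge \tfrac15 w(G)+\tfrac45 w(M)$; combining the resulting bounds so as to eliminate $w(M)$ produces the constant $\tfrac{8}{11}$. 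Your closing suggestion to ``invoke the generic per-vertex lower bound'' conflates this bipartite-subgraph lemma with a per-vertex discharging bound; the two are not equivalent, and only the former is known to yield $\tfrac{8}{11}$.
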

	Note that the bound of Theorem \ref{thm:BL} cannot be extended to bisections in triangle-free subcubic graphs simply because the maximum bisection of $K_{1,3}$ has two edges. 
Moreover, $K_{1,3}$ shows that the best bound can be at most $(2/3) w(G)$.
However, it turns out that $K_{1,3}$ is the only triangle-free subcubic graph with a maximum bisection of size $(2/3) w(G)$.
	
	In Section \ref{sec:triangle-free}, using extensively probabilistic arguments we show a better lower bound for a maximum weight bisection of weighted triangle-free subcubic graphs, which are not a $K_{1,3}$. Indeed, we show that every triangle-free subcubic graph $G$, which is not a $K_{1,3}$, has a bisection with weight at least $({613}/{855})w(G)\approx 0.716959w(G)$, and we believe that this bound is not tight. Since the maximum bisection of \AY{the} Petersen graph has at most 11 edges, $(11/15)w(G)\approx0.733333w(G)$ may be the best possible lower bound. Thus, we conjecture the following:	
	\begin{conjecture}\label{conj:triangle-free cubic}
		Every weighted triangle-free subcubic graph $G=(V,E,w)$ has a bisection with weight at least $\frac{11}{15}w(G)$ unless $G\cong K_{1,3}$.
	\end{conjecture}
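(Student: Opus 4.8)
The plan is to follow and sharpen the probabilistic--structural framework used in Section~\ref{sec:triangle-free} to obtain the weaker bound $\frac{613}{855}w(G)$, pushing the constant all the way to the extremal value $\frac{11}{15}$ with the Petersen graph as the (conjecturally unique) tight example. Before anything else I would dispose of degenerate structure by reduction rules: vertices of degree at most $2$, pendant edges, and any $K_{1,3}$-component can be removed or contracted while controlling the effect on $w(G)$ and on the balance constraint, so that one may assume $G$ is, in essence, a connected cubic triangle-free graph. The point of this normalisation is that $\frac{11}{15}$ should be attained only by Petersen-like graphs, so after reductions the target inequality ought to hold with strict slack everywhere except on a short explicit list of graphs, which can then be checked directly.

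The main engine would be the cut-to-bisection lemma proved in Section~\ref{sec:basic}: it converts a lower bound on the maximum weighted cut into a lower bound on the maximum weighted bisection at the cost of a ``rebalancing'' term, and Vizing's theorem lets one $4$-edge-colour $G$ and move whole colour classes to repair the balance cheaply. Concretely I would (a) prove a cut lower bound for weighted triangle-free subcubic graphs strictly better than the $\frac{8}{11}$ of Theorem~\ref{thm:GY} --- ideally the conjectured $\frac{4}{5}$, but at least $\frac{11}{15}+\delta$ for some $\delta>0$ --- so as to create the slack needed to absorb rebalancing; and (b) bound the rebalancing loss by a refined discharging analysis over the local configurations of a cubic triangle-free graph (short paths, induced $5$-cycles), arguing that in a near-optimal cut the weight lost when a vertex is switched from the larger to the smaller side is small on average. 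Combining (a) and (b) through the Section~\ref{sec:basic} lemma should yield a balanced cut of weight at least $\frac{11}{15}w(G)$.

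The decisive obstacle is hidden in step~(a). Since every bisection is in particular a cut, the conjecture already \emph{implies} $\mathrm{maxcut}(G)\ge\frac{11}{15}w(G)$, which is strictly stronger than the best known weighted cut bound $\frac{8}{11}w(G)$ of Theorem~\ref{thm:GY} (note $\frac{11}{15}>\frac{8}{11}$); thus proving Conjecture~\ref{conj:triangle-free cubic} is at least as hard as improving that cut bound past $\frac{11}{15}$, and the cleanest route appears to pass through the still-open Gutin--Yeo weighted cut conjecture ($\frac{4}{5}$). Equally delicate is that, unlike the unweighted setting where the Petersen graph pins down the extremal case, arbitrary edge weights may create new near-tight families, so the discharging in step~(b) must be carried out weight-by-weight rather than edge-by-edge. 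I expect that closing the remaining gap from $\frac{613}{855}$ to $\frac{11}{15}$ will require either a genuinely new argument that builds the balance constraint into the cut construction from the outset, or first settling the weighted $\frac{4}{5}$ cut conjecture and then showing that a near-maximum cut can be rebalanced with loss at most $\frac{1}{15}w(G)$.
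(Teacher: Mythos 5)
You should first note that the statement you were asked to prove is, in the paper, an open conjecture: the authors do not prove Conjecture~\ref{conj:triangle-free cubic}. What they actually prove (Theorem~\ref{thm:main} and its extension in Section~4.2) is the weaker bound $\frac{613}{855}w(G)\approx 0.7170\,w(G)$ for every triangle-free subcubic weighted graph other than $K_{1,3}$, and they explicitly leave the gap up to $\frac{11}{15}\approx 0.7333$ open. So there is no ``paper proof'' for your attempt to be measured against, and the relevant question is whether your proposal closes the conjecture. It does not.

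The genuine gap is that your plan reduces the conjecture to statements that are themselves open, and you concede as much. Your step~(a) requires a maximum weighted cut lower bound for triangle-free subcubic graphs of at least $\frac{11}{15}+\delta$; the best known bound is $\frac{8}{11}w(G)$ (Theorem~\ref{thm:GY}), and $\frac{8}{11}<\frac{11}{15}$, so this step is precisely the open Gutin--Yeo problem (their $\frac{4}{5}$ conjecture, or any improvement past $\frac{11}{15}$), not a lemma you can invoke. Your step~(b) is equally unsupported: you assert that a near-maximum cut can be rebalanced into a bisection with loss at most $\frac{1}{15}w(G)$ ``by discharging,'' but no mechanism is given, and the paper's own rebalancing machinery --- Lemma~\ref{lem:bbsg} applied to random subgraphs in $\mathcal{B}_b(G)$, combined via a convex combination with the matching-based bound of Lemma~\ref{lem:triangle-free cubic} --- is exactly what tops out at $\frac{613}{855}$. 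Moving Vizing colour classes to repair balance, as you suggest, has no cost analysis at all; in the weighted setting a single colour class can carry almost all of $w(G)$. Finally, your opening reduction (``remove or contract degree-$\le 2$ vertices and pendant edges to reduce to the cubic case'') glosses over where much of the paper's real work lies: handling bridges, tight components, and the claw exclusion occupies all of Section~4.2 (Theorem~\ref{TightProb}, Lemma~\ref{lem:2component}, and the induction over 2-edge-connected components), and these reductions must preserve both the weight ratio and the balance constraint simultaneously. As written, your proposal is a research programme that correctly diagnoses why the conjecture is hard, but it proves nothing beyond what the paper already establishes.
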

For disjoint $A, B\subseteq V(G)$, let $G[A]$ denote the subgraph induced by $A$ and $G[A, B]$ denote the subgraph induced by the edges between $A$ and $B$. 	
	
	\section{Conjecture \ref{cj:1} for even $k$ and almost all graphs}\label{sec:basic}

	Let $\mathcal{B}(G)$ denote the set of bipartite subgraphs \AY{of} $G$ such that every
	connected component of it is an induced subgraph of $G$. Gutin and Yeo \cite{GY} showed the following:
	
	\begin{lemma}\label{lem:1}\cite{GY}
		Let $G=(V,E,w)$ be a weighted graph and $B\in \mathcal{B}(G)$. Then, $G$ has a cut with weight at least $\frac{w(G)+w(B)}{2}$.
	\end{lemma}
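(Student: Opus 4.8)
The plan is to exhibit the desired cut through a random vertex partition that is ``aligned'' with $B$ and then apply the first-moment method. Write $C_1,\dots,C_m$ for the connected components of $B$; each $C_i$ is bipartite, so fix a bipartition $V(C_i)=X_i\cup Y_i$. Build a random partition $(A,V\setminus A)$ of $V(G)$ as follows. For each $i$ independently toss a fair coin; if it comes up heads, place $X_i$ in $A$ and $Y_i$ in $V\setminus A$, and if tails, do the reverse. Each vertex of $G$ lying in no component of $B$ is assigned to $A$ or to $V\setminus A$ by its own independent fair coin.

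First I would check that every edge of $B$ is cut with probability $1$: such an edge joins $X_i$ to $Y_i$ for some $i$, and both choices of orientation of $C_i$ separate $X_i$ from $Y_i$. Hence these edges contribute exactly $w(B)$ to the cut. The key step is to control the edges of $E\setminus E(B)$. Here I would use the hypothesis that each $C_i$ is an \emph{induced} subgraph of $G$: if some edge $e\in E\setminus E(B)$ had both endpoints in a single $V(C_i)$, then $e$ would belong to $G[V(C_i)]=C_i\subseteq B$, a contradiction. Therefore every edge of $E\setminus E(B)$ has its two endpoints governed by two \emph{distinct} and independent fair coins (two different components, a component and a free vertex, or two free vertices). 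In each case the two endpoints land on opposite sides with probability exactly $\tfrac12$, so such an edge is cut with probability $\tfrac12$.

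By linearity of expectation, the expected weight of the random cut equals
\[
  w(B)+\tfrac12\bigl(w(G)-w(B)\bigr)=\frac{w(G)+w(B)}{2},
\]
so some outcome of the random experiment yields a partition of weight at least $(w(G)+w(B))/2$. To see that this partition is a genuine cut (a proper nonempty part), note that if $w(G)>0$ the degenerate outcomes placing all vertices on one side have weight $0$, strictly below the mean, so a realization of positive weight---hence a proper cut---must exist; the case $w(G)=0$ is trivial.

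The main obstacle is really the single structural observation above: the ``induced'' condition in the definition of $\mathcal B(G)$ is exactly what forbids a non-$B$ edge from being buried inside one component, and this is what makes the endpoint-coins independent and delivers the clean factor $\tfrac12$ on $E\setminus E(B)$. Everything else is the standard first-moment computation; if a constructive statement were wanted, I would derandomize by the method of conditional expectations, deciding the coins one at a time so as never to decrease the conditional expected cut weight.
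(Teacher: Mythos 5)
Your proof is correct and takes essentially the same route as the paper: the paper cites this lemma from \cite{GY} and proves the analogous Lemma~\ref{lem:bbsg} by exactly this argument --- independently flip a fair coin to orient each component's bipartition (and each leftover vertex), observe that edges of $B$ are cut with probability $1$ and all remaining edges with probability at least $\tfrac12$, and apply linearity of expectation. Your explicit appeal to the induced-component condition (to forbid a non-$B$ edge from lying inside one $V(C_i)$) and the check that a positive-weight outcome is a proper cut are just careful spellings-out of the same argument.
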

	We say that $B\in \mathcal{B}_b(G)$, if $B$ is the union of vertex-disjoint bipartite subgraphs $B_i$'s (not necessarily connected) of $G$ with partite sets $(X_i, Y_i)$ where $G[X_i]$ and $G[Y_i]$ have no edges and $|X_i|=|Y_i|$. The following lemma can be proved using a similar argument to the one in the proof of Lemma \ref{lem:1}. For the sake of completeness, we provide a proof here.
	\begin{lemma}\label{lem:bbsg}
		Let $G=(V,E,w)$ be a weighted graph and $B\in \mathcal{B}_b(G)$. Then, $G$ has a bisection with weight at least $\frac{w(G)+w(B)}{2}$.
	\end{lemma}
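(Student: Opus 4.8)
The plan is to mimic the probabilistic construction behind Lemma \ref{lem:1}, but to exploit the balancedness built into $\mathcal{B}_b(G)$ so that \emph{every} outcome of the random process is automatically a bisection. Write $B=\bigcup_i B_i$ with parts $(X_i,Y_i)$ as in the definition, and let $S$ be the set of vertices lying in no $B_i$ (the ``singletons''). First I would fix an arbitrary pairing of the vertices of $S$ into pairs, leaving one vertex unpaired when $|S|$ is odd. I then define a random partition $(A,V\setminus A)$ by three independent families of fair coin flips: for each $i$, with probability $1/2$ send $X_i$ to $A$ and $Y_i$ to $V\setminus A$, and otherwise do the reverse; for each singleton pair $\{u,v\}$, send one of $u,v$ to $A$ and the other to $V\setminus A$, choosing which uniformly at random; and assign the leftover singleton, if any, to a side uniformly at random.

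The key observation is that this process always produces a bisection. Since $|X_i|=|Y_i|$, each block $B_i$ contributes exactly $|X_i|$ vertices to each side no matter how its coin lands, and each singleton pair contributes exactly one vertex to each side; hence $|A|$ and $|V\setminus A|$ can differ only because of the unpaired singleton, so they differ by at most $1$.

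It remains to bound the expected cut weight. Because $G[X_i]$ and $G[Y_i]$ contain no edges, every edge of $G$ with both endpoints in $V(B_i)=X_i\cup Y_i$ runs between $X_i$ and $Y_i$ and is therefore separated by every outcome; these within-block edges are vertex-disjoint across the $i$'s and contain all of $B$, so they are cut with probability $1$ and have total weight at least $w(B)$. Every remaining edge joins two distinct blocks (two different $B_i$'s, a block and a singleton, or two singletons in different pairs, or a pair edge), and for each such edge at least one endpoint sits in a group whose side is decided by a coin independent of the other endpoint, so it is cut with probability at least $1/2$ (an edge inside a single singleton pair is in fact always cut). By linearity of expectation the expected weight of the cut is at least $w(B)+\tfrac12\bigl(w(G)-w(B)\bigr)=\tfrac{w(G)+w(B)}{2}$, and since every outcome is a bisection, some bisection attains at least this value.

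The only genuine obstacle is the tension between the two requirements: the weight bound needs each crossing edge to be cut with probability at least $1/2$, while the bisection constraint needs the part sizes to stay balanced. Pairing the singletons resolves this, since a random split of a pair keeps the two sides balanced yet leaves each endpoint marginally uniform, so it neither breaks the balance nor lowers any cut probability below $1/2$. I expect the one point needing care to be the bookkeeping that the within-block weight is \emph{at least} $w(B)$ rather than exactly $w(B)$, because each $B_i$ need not contain every edge of $G[X_i,Y_i]$; this only helps, as those extra edges are cut for free.
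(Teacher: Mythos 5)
Your proof is correct and follows essentially the same route as the paper: the paper also pairs up the vertices not covered by $B$ (by adjoining each pair as a two-vertex bipartite block, leaving at most one vertex over), then flips one independent fair coin per block to assign $X_i$ and $Y_i$ to opposite sides, observes that the outcome is always a bisection, and concludes by linearity of expectation since edges inside blocks are cut with probability $1$ and all others with probability at least $1/2$. The only cosmetic difference is that you keep the singleton pairs as separate groups rather than absorbing them into $B$; the argument is otherwise identical.
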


	\begin{proof}
		Without loss of generality, we may assume that $|V(G)\setminus V(B)|\leq 1$. For, if there are at least two vertices $\{v,w\}\in V(G)\setminus V(B)$, then we can add the bipartite subgraph $B'$ with partite sets  $\{v\},\{w\}$ to $B$ and denote the new subgraph again by $B$. Clearly, $B$ is still in $\mathcal{B}_b(G)$ and the weight of $B$ does not decrease. Therefore, we only need to prove the bound for this new subgraph $B$.
		
		Suppose that $B=\cup_{i=1}^t B_i$, \AY{where $B_i$ is a bipartite subgraph with bipartition $(X_i,Y_i)$}.  For each $i\in [t]$, we color the vertices in $X_i$ red with probability $1/2$ and blue otherwise, independently of the colors of the other sets $X_j$. If $X_i$ is coloured red then $Y_i$ is coloured blue, and if $X_i$ is coloured blue then $Y_i$ is coloured red. If there exists a vertex in $V(G)\setminus V(B)$, then we colour \AY{it red}  with probability $1/2$ and blue otherwise. Let $X$ be the set of red vertices and let $Y$ be the set of blue vertices. Consider the bisection $(X, Y)$. In fact, $(X,Y)$ is always a bisection since $|X_i|=|Y_i|$ for all $i\in [t]$ and $|V(G)\setminus V(B)|\leq 1$. Observe that each edge in $B$ will be in this bisection with probability 1. All other edges will be in this bisection with probability at least 1/2. Thus, by the linearity of expectation, we have
		\[\mathbb{E}(w(X,Y))\geq\frac{w(G)-w(B)}{2}+w(B)=\frac{w(G)+w(B)}{2},\]
		which completes the proof.
	\end{proof}
	Since any matching $M$ of $G$ is clearly in $\mathcal{B}_b(G)$, we have the following corollary.
	
		\begin{corollary}\label{cor:mwmatching}
		Let $G=(V,E,w)$ be a weighted graph and $M$ its maximum weight matching. Then, $G$ has a bisection with weight at least $\frac{w(G)+w(M)}{2}$.
	\end{corollary}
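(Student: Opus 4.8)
The plan is to exhibit the matching $M$ as a member of the family $\mathcal{B}_b(G)$ and then apply Lemma \ref{lem:bbsg} directly; no further argument should be needed. Write $M = \{e_1, \dots, e_m\}$ with $e_i = \{x_i, y_i\}$. First I would take $B_i$ to be the single-edge bipartite subgraph on $\{x_i, y_i\}$ with partite sets $X_i = \{x_i\}$ and $Y_i = \{y_i\}$, and set $B = \bigcup_{i=1}^m B_i$.

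Next I would check the three defining conditions of $\mathcal{B}_b(G)$ for this $B$. Since each $X_i$ and each $Y_i$ is a singleton, the induced subgraphs $G[X_i]$ and $G[Y_i]$ contain no edges, and trivially $|X_i| = |Y_i| = 1$. Because $M$ is a matching, its edges are pairwise vertex-disjoint, so the subgraphs $B_i$ are vertex-disjoint. Hence $B = M$ belongs to $\mathcal{B}_b(G)$, and $w(B) = w(M)$.

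Finally, applying Lemma \ref{lem:bbsg} to this $B$ gives a bisection of weight at least $\frac{w(G) + w(B)}{2} = \frac{w(G) + w(M)}{2}$, which is exactly the claimed bound.

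I do not expect any genuine obstacle here: the whole content is the structural observation that a matching decomposes into vertex-disjoint single-edge bipartite pieces, each of which automatically satisfies the balance condition $|X_i| = |Y_i|$ and the internal-independence condition of $\mathcal{B}_b(G)$. The corollary is therefore an immediate specialization of Lemma \ref{lem:bbsg} to the case where every partite set has size one, and the only thing worth stating explicitly is the verification that $M \in \mathcal{B}_b(G)$.
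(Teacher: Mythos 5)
Your proof is correct and matches the paper's own reasoning: the paper derives this corollary by simply noting that any matching $M$ is clearly in $\mathcal{B}_b(G)$ and invoking Lemma \ref{lem:bbsg}, which is exactly the specialization you carry out (your explicit verification of the singleton partite sets is just a more detailed writing of the same observation).
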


We will need to use the following bound for chromatic index known as Vizing's Theorem.

\begin{theorem}\label{Vizing}\cite{Vizing}
For any simple graph $G$, $\Delta(G)\leq\chi'(G)\leq \Delta(G)+1$.
\end{theorem}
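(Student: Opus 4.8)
The inequality $\Delta(G) \le \chi'(G)$ is immediate: if $v$ is a vertex of degree $\Delta(G)$, then the $\Delta(G)$ edges incident to $v$ pairwise share the endpoint $v$, so any proper edge-colouring must give them distinct colours, forcing at least $\Delta(G)$ colours. The substance of the theorem is the upper bound $\chi'(G) \le \Delta(G)+1$, which I would prove by induction on $|E(G)|$ using $k := \Delta(G)+1$ available colours. For the inductive step, delete an arbitrary edge $e = xy_0$, colour $G-e$ properly with $k$ colours by the induction hypothesis, and try to extend the colouring to $e$. Since every vertex has degree at most $\Delta(G)=k-1$, at each vertex at least one colour is \emph{missing} (absent from its incident edges); write $M(v)$ for the set of colours missing at $v$. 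If $M(x)\cap M(y_0)\ne\emptyset$ we just assign such a colour to $e$, so assume this intersection is empty.

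The central device is a \emph{Vizing fan}: a maximal sequence $y_0,y_1,\dots,y_s$ of distinct neighbours of $x$ in which, for each $i\ge 1$, the colour of $xy_i$ lies in $M(y_{i-1})$. It is built greedily by repeatedly choosing a colour missing at the current last vertex $y_i$ and, if that colour already appears at $x$, following it to the neighbour $y_{i+1}$ that carries it. On such a fan there is a \emph{rotation} operation: for any position $j$, reassigning to each $xy_{i-1}$ the colour of $xy_i$ (for $1\le i\le j$) and uncolouring $xy_j$ yields a valid partial colouring of $G$, because each reassigned colour was missing at the relevant endpoint and the set of colours appearing at $x$ is unchanged. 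If the fan terminates with a colour $\beta\in M(y_s)$ that is also missing at $x$, then rotating the whole fan and colouring the now-uncoloured edge $xy_s$ with $\beta$ completes the colouring.

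The remaining case is the heart of the matter, and the step I expect to be the main obstacle. Here the fan terminates with $\beta\in M(y_s)$ equal to the colour of some earlier fan edge $xy_{j+1}$, so $\beta$ is missing at both $y_j$ and $y_s$ but present at $x$ (in particular $\beta\ne\alpha$ for any $\alpha\in M(x)$). I would fix $\alpha\in M(x)$ and examine the subgraph $H$ consisting of the edges coloured $\alpha$ or $\beta$, each component of which is a path or an even cycle. Each of $x$, $y_j$, $y_s$ is missing one of $\alpha,\beta$, hence has degree at most one in $H$; three such vertices cannot all be endpoints of a single alternating path, so $x$ lies in a different component of $H$ from at least one of $y_j,y_s$. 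Swapping $\alpha$ and $\beta$ along that vertex's component makes $\alpha$ missing there while leaving $\alpha\in M(x)$, after which the appropriate rotation (up to position $s$ or up to position $j$) produces an uncoloured edge both of whose endpoints miss $\alpha$, allowing us to finish.

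The delicate points, which I would verify with care, are exactly the interactions between the Kempe swap and the fan rotation: one must check that the $\alpha/\beta$-swap affects neither the edges incident to $x$ (all of which lie in the component of $x$, since $\alpha\in M(x)$) nor the missing-colour data at the vertices involved in the subsequent rotation, so that the rotation remains valid and the freed colour really is common to both endpoints of the final uncoloured edge. This bookkeeping across the swap-then-rotate composition, rather than any single clever trick, is the crux of Vizing's argument.
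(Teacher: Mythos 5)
The paper never proves this statement at all: it is Vizing's theorem, imported from the literature (the citation is Vizing's 1964 paper, with the book of Stiebitz, Scheide, Toft and Favrholdt also pointed to in the introduction) and used purely as a black box, via Corollary \ref{cor:mwmatching}, to prove Theorem \ref{thm:chi} and hence the even case of Conjecture \ref{cj:1}. So there is no proof in the paper to compare yours against; judged on its own, your proposal is the standard fan-and-Kempe-chain argument, and it is correct. The bookkeeping you defer in the final case does go through, for exactly the reasons you hint at: all fan edges meet $x$, so their colours are pairwise distinct, none of them is $\alpha$ (which is missing at $x$), and exactly one of them, $xy_{j+1}$, has colour $\beta$. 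Hence a swap on a component of the $\alpha/\beta$-subgraph not containing $x$ fixes every edge at $x$, keeps $\alpha$ missing at $x$, and alters missing-colour data only in the colours $\alpha$ and $\beta$. The rotation up to position $j$ transfers only colours outside $\{\alpha,\beta\}$, so it is unaffected by a swap at $y_j$'s component; in the other subcase, where the full rotation gives $xy_j$ the colour $\beta$, the vertex $y_j$ lies in $x$'s component, hence is untouched by the swap at $y_s$'s component and still misses $\beta$. Either way the freed edge has $\alpha$ missing at both ends, completing the induction.
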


From Vizing's Theorem, every graph \AY{is classified into one of} two classes. A graph $G$ is in {\em Class 1} if $\chi'(G)=\Delta(G)$ and in {\em Class 2} if $\chi'(G)=\Delta(G)+1$. Now, we prove the following theorem which confirms Conjecture \ref{cj:1} for graphs in Class 1.
\begin{theorem}\label{thm:chi}
	Every weighted graph $G=(V,E,w)$ has a bisection with weight at least $\frac{\chi'(G)+1}{2\chi'(G)}w(G)$. In particular, Conjecture \ref{cj:1} holds for all graphs in Class 1.
\end{theorem}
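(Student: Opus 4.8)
The plan is to convert the chromatic-index bound into a weight guarantee on a single matching, and then feed that matching into Corollary~\ref{cor:mwmatching}. Observe first that if we can exhibit any matching $M$ of $G$ with $w(M)\ge w(G)/\chi'(G)$, then Corollary~\ref{cor:mwmatching} immediately yields a bisection of weight at least
\[
\frac{w(G)+w(M)}{2}\ \ge\ \frac{w(G)+w(G)/\chi'(G)}{2}\ =\ \frac{\chi'(G)+1}{2\chi'(G)}\,w(G),
\]
which is exactly the desired bound. So the whole theorem reduces to producing one sufficiently heavy matching.

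To produce such a matching I would use the defining property of the chromatic index. By definition there is a proper edge colouring of $G$ using $\chi'(G)$ colours, and its colour classes $M_1,\dots,M_{\chi'(G)}$ are matchings that partition $E(G)$. Hence $\sum_{i=1}^{\chi'(G)} w(M_i)=w(G)$, and by averaging at least one colour class $M_j$ satisfies $w(M_j)\ge w(G)/\chi'(G)$. Since $M_j$ is a matching, the maximum weight matching $M$ of $G$ satisfies $w(M)\ge w(M_j)\ge w(G)/\chi'(G)$, which is precisely the matching needed above. (The degenerate case $E(G)=\emptyset$ is trivial, since then $w(G)=0$ and any bisection works.)

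For the ``in particular'' part, I would invoke Vizing's Theorem together with the monotonicity of the bound. For a Class~1 graph we have $\chi'(G)=\Delta(G)\le k$, and since $\frac{x+1}{2x}=\frac12+\frac{1}{2x}$ is decreasing in $x$, the bound $\frac{\chi'(G)+1}{2\chi'(G)}w(G)$ is at least $\frac{k+1}{2k}w(G)$. When $k$ is odd this is exactly the conjectured value. When $k$ is even, the same monotonicity gives $\frac{k+1}{2k}>\frac{k+2}{2(k+1)}$, so the bound again dominates the conjectured one; thus Conjecture~\ref{cj:1} follows for all Class~1 graphs.

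I do not expect a serious obstacle here: the content of the argument is the single observation that averaging a proper $\chi'(G)$-edge-colouring yields a matching of weight at least $w(G)/\chi'(G)$, and that this is exactly the matching weight for which Corollary~\ref{cor:mwmatching} returns the claimed coefficient $\frac{\chi'(G)+1}{2\chi'(G)}$. The only point requiring any care is the parity bookkeeping in the Class~1 implication, which is the routine monotonicity check above.
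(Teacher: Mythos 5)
Your proposal is correct and follows essentially the same route as the paper: partition $E(G)$ into $\chi'(G)$ matchings via a proper edge colouring, take the heaviest class by averaging, and apply Corollary~\ref{cor:mwmatching}. Your explicit monotonicity check of $\frac{x+1}{2x}$ for the Class~1 implication is just a spelled-out version of the step the paper leaves implicit.
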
 
\begin{proof}
	Since the edge set of $G$ can be partitioned into $\chi'(G)$ matching,  there is a matching $M$ with weight at least $\frac{w(G)}{\chi'(G)}$. By Corollary \ref{cor:mwmatching}, $G$ has a bisection with weight at least $\frac{w(G)+w(M)}{2}\geq \frac{\chi'(G)+1}{2\chi'(G)}w(G)$. Furthermore, if $G$ is in Class 1 i.e. $\chi'(G)=\Delta(G)$, then $G$ has a bisection with weight at least $ \frac{\Delta(G)+1}{2\Delta(G)}w(G)$.
\end{proof}

Erd\H{o}s and Wilson \cite{EW} showed that almost all graphs are in Class 1. This and Theorem \ref{thm:chi} imply the following:
\begin{theorem}
For almost all graphs $G=(V,E)$ and every weight function $w$ on $E$, the bound of Conjecture \ref{cj:1} holds.
\end{theorem}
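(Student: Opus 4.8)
The plan is to derive the statement as a direct corollary of two facts already in hand: the Erd\H{o}s--Wilson theorem \cite{EW}, which asserts that almost all graphs lie in Class 1, and Theorem \ref{thm:chi}, which confirms Conjecture \ref{cj:1} for every Class 1 graph under an arbitrary weight function. No fresh combinatorial or probabilistic argument is required; the task is purely to glue these two inputs together and to check that the lower bound of Theorem \ref{thm:chi} subsumes both the odd and even forms demanded by Conjecture \ref{cj:1}.

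First I would make precise what ``almost all graphs'' means: among all labelled graphs on $n$ vertices, the fraction belonging to Class 1 tends to $1$ as $n\to\infty$. Consequently it suffices to establish the bound of Conjecture \ref{cj:1} for an arbitrary graph $G$ in Class 1 together with an arbitrary weight function $w$ on its edge set. Since membership in Class 1 is a property of the underlying (unweighted) graph alone, while Theorem \ref{thm:chi} is valid for every weighting, the quantifier ``for almost all graphs $G$ and every weight function $w$'' poses no difficulty.

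Next I would record the monotonicity that makes the two bounds compatible. For $G$ in Class 1 we have $\chi'(G)=\Delta(G)$, so Theorem \ref{thm:chi} yields a bisection of weight at least $\frac{\Delta(G)+1}{2\Delta(G)}w(G)$. If $k\ge\Delta(G)$ is the parameter of Conjecture \ref{cj:1}, then because $x\mapsto\frac{x+1}{2x}$ is decreasing we get $\frac{\Delta(G)+1}{2\Delta(G)}\ge\frac{k+1}{2k}$, which is exactly the required bound when $k$ is odd. When $k$ is even, the additional inequality $\frac{k+1}{2k}\ge\frac{k+2}{2(k+1)}$ (equivalent to $(k+1)^2\ge k(k+2)$) shows the even target is met as well.

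The main obstacle is that there is essentially no obstacle: the substance lies entirely in Theorem \ref{thm:chi} and the Erd\H{o}s--Wilson theorem, both of which may be assumed. The only care needed is in correctly interpreting the asymptotic notion ``almost all'' and in verifying the elementary monotonicity above, after which the result follows immediately.
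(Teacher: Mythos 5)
Your proposal is correct and is essentially identical to the paper's own proof, which likewise derives the theorem by combining the Erd\H{o}s--Wilson theorem (almost all graphs are in Class 1) with Theorem \ref{thm:chi}. The monotonicity check you spell out (that $\frac{\Delta(G)+1}{2\Delta(G)} \ge \frac{k+1}{2k} \ge \frac{k+2}{2(k+1)}$ for $k \ge \Delta(G)$) is exactly what the paper leaves implicit in the ``in particular'' clause of Theorem \ref{thm:chi}.
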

 
Combining Theorem \ref{Vizing} with Theorem \ref{thm:chi}, we have the following corollary which confirms the even case of Conjecture \ref{cj:1}. 
\begin{corollary}\label{thm:even}
Let $k$ be a positive integer. Any graph $G$ with  $\Delta(G)\leq k$ has a bisection with weight at least 
\[ \frac{k+2}{2(k+1)}w(G).\] In particular, Conjecture \ref{cj:1} holds when $k$ is even.
\end{corollary}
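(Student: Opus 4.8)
The plan is to simply combine the two results already in hand: Theorem~\ref{thm:chi}, which guarantees a bisection of weight at least $\frac{\chi'(G)+1}{2\chi'(G)}w(G)$ for \emph{every} weighted graph, and Vizing's Theorem~\ref{Vizing}, which controls $\chi'(G)$ in terms of $\Delta(G)$. The only real content is getting the direction of the inequality right, so I would isolate that as the key observation first.

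The crucial step is to note that the function $g(x)=\frac{x+1}{2x}=\frac12+\frac{1}{2x}$ is strictly \emph{decreasing} in $x$ for $x>0$. This means that a \emph{larger} chromatic index yields a \emph{weaker} guarantee in Theorem~\ref{thm:chi}, so to obtain a bound valid uniformly over all graphs with $\Delta(G)\le k$ I must substitute the largest admissible value of $\chi'(G)$. By Vizing's Theorem together with the hypothesis $\Delta(G)\le k$, we have $\chi'(G)\le \Delta(G)+1\le k+1$, and hence by monotonicity
\[
\frac{\chi'(G)+1}{2\chi'(G)}=g(\chi'(G))\ \ge\ g(k+1)=\frac{k+2}{2(k+1)}.
\]

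Chaining this with Theorem~\ref{thm:chi} immediately gives a bisection of weight at least $\frac{k+2}{2(k+1)}w(G)$, which is the asserted bound. For the ``in particular'' clause, I would simply observe that when $k$ is even the even-case bound stated in Conjecture~\ref{cj:1} is precisely $\frac{k+2}{2(k+1)}w(G)$, so the displayed inequality is exactly the conjectured lower bound; this settles the even case.

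I do not anticipate any genuine obstacle here, since the argument is a one-line consequence of two prior results. The only point requiring care is the monotonicity of $g$: one must resist the temptation to plug $\chi'(G)=\Delta(G)$ or some other value into Theorem~\ref{thm:chi}, because the worst case for the bound is the maximum possible $\chi'(G)$, namely $k+1$ from the Class~2 possibility in Vizing's Theorem. Everything else is routine arithmetic.
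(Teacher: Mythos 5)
Your proposal is correct and follows exactly the paper's own route: the paper derives this corollary by ``combining Theorem~\ref{Vizing} with Theorem~\ref{thm:chi},'' which is precisely your chain $\chi'(G)\le\Delta(G)+1\le k+1$ fed into the bound $\frac{\chi'(G)+1}{2\chi'(G)}w(G)$ via the monotonicity of $x\mapsto\frac{x+1}{2x}$. Your explicit attention to the decreasing nature of that function is a detail the paper leaves implicit, but the argument is the same.
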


\begin{figure}\label{fig:1}
	\centering
	\begin{tikzpicture}[every node/.style={circle, draw, minimum size=0.4cm}]
		\foreach \i in {1,2,...,4} {
			\node (a\i) at (0,\i) {$a_{\i}$};
		}
		\node (a9) at (2,5) {$v$};
		
		\foreach \i in {1,2,...,4} {
			\node (b\i) at (4,\i) {$b_{\i}$};
		}
		
		\foreach \i in {1,2,...,4} {
			\foreach \j in {1,2,...,4} {
				\draw (a\i) -- (b\j);
			}
		}
		\foreach \i in {1,2,...,4} {
			\draw (a9) -- (b\i);
		}
		\draw (a9) -- (a4);
	\end{tikzpicture}
	\caption{Balanced bipartite graph with an additional vertex}
	\label{fig:bipartite}
\end{figure}
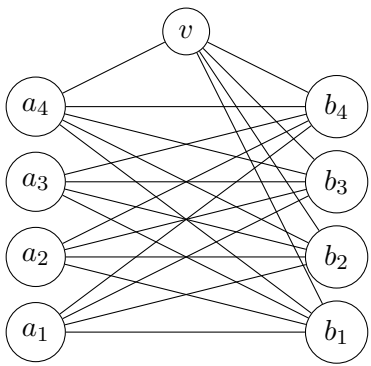
\begin{remark1} Consider the unweighted case of Conjecture \ref{cj:1}.
	By Theorem \ref{thm:chi}, we only have to consider the case of  $\chi'(G)=k+1$. One  example of such a graph is the Peterson graph $PG$. However, $PG$ has a perfect matching with $|E(PG)|/3$ edges. Thus, by applying Corollary \ref{cor:mwmatching}, we can also prove the conjecture for $PG$. Inspired by this fact, one may try to prove that if $k$ is odd and $G$ is a graph with  $\chi'(G)=k+1$, then there exists a matching with at least $|E(G)|/k$ edges. Unfortunately, this is not true. Assume $k=2t+1$ and consider the graph $G$ that is constructed from a complete bipartite graph with partite sets $(A, B)$ where $|A|=|B|=2t$ by adding a new vertex $v$, all edges between $v$ and $B$ and one edge between $A$ and $v$ (see Fig. \ref{fig:bipartite}). One can observe that $\Delta(G)=k$, $|E(G)|=k^2-k+1$ and that the maximum size of a matching in $G$ is $2t=k-1$. This implies $k+1\geq\chi'(G)\geq \frac{|E(G)|}{k-1}=k+\frac{1}{k-1}$ and therefore $\chi'(G)=k+1$. But as mentioned before every matching in $G$ has size at most $k-1=\frac{k-1}{k^2-k+1}|E(G)|<|E(G)|/k$.
\end{remark1}

\section{Conjecture \ref{cj:1} for weighted subcubic graphs} \label{sec:subcubic}
In this section, we prove the Conjecture \ref{cj:1} for weighted subcubic graphs i.e. when $k= 3$. 

For any integer $k>0$, a {\em $k$-bisection} of a graph $G$ is a bisection $(A,B)$ where every component in $G[A]\cup G[B]$ is a tree with at most $k$ vertices. Mattiolo and Mazzuoccolo \cite{MM} showed the following result.

\begin{lemma}\cite{MM}\label{lem:3-bisection}
	Every cubic multigraph has a 3-bisection.
\end{lemma}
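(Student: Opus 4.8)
The plan is to prove this by a local-search (extremal) argument over balanced bipartitions, and I would first reduce the statement to a concrete structural task. Since $G$ is cubic we have $3|V(G)|=2|E(G)|$, so $|V(G)|$ is even and a bisection is an exactly balanced $2$-colouring into two parts of size $|V(G)|/2$. A tree on at most three vertices is $K_1$, $K_2$ or $P_3$, so a bisection $(A,B)$ is a $3$-bisection exactly when each of $G[A]$ and $G[B]$ has maximum degree at most $2$, is acyclic, and has no component on at least four vertices; equivalently, neither part may contain a monochromatic cycle or a monochromatic $P_4$.

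\emph{Step 1: reduce one part to a matching.} Among all balanced bipartitions I would pick one maximising the number of crossing edges $e(A,B)$. Writing $d_A(x),d_B(x)$ for the numbers of neighbours of $x$ inside $A$ and inside $B$ (so $d_A(x)+d_B(x)=3$), the effect on $e(A,B)$ of simultaneously moving $u\in A$ to $B$ and $v\in B$ to $A$ is $(2d_A(u)-3)+(2d_B(v)-3)$ when $uv\notin E$ (the adjacent case is only more favourable, since the edge $uv$ stays crossing). Maximality therefore forces $d_A(u)+d_B(v)\le 3$ for every non-adjacent pair $u\in A$, $v\in B$. In particular the two parts cannot both contain a vertex of internal degree at least $2$, so (after possibly swapping the roles of $A$ and $B$) $G[A]$ is a matching, i.e. all its components are $K_1$ or $K_2$, and part $A$ already satisfies the $3$-bisection condition.

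\emph{Step 2: chop up the other part.} It remains to deal with $G[B]$. I would first use the same swap inequality (now against the matching $A$) to rule out internal degree $3$ in $B$, so that $G[B]$ is a disjoint union of paths and cycles, and then introduce a secondary objective — minimising $\sum_C |C|^2$ over all monochromatic components $C$, taken among all cut-maximising balanced bipartitions — to break the long paths and the cycles of $B$ into pieces on at most three vertices. The basic move is a cut-neutral balanced swap: moving a degree-$2$ vertex $b$ of a long path or cycle of $B$ into $A$ changes the cut by $2d_B(b)-3=+1$, which is exactly compensated by moving a matched vertex $a$ of $A$ (with $d_A(a)=1$) into $B$; such a move splits a large component of $B$, while on the $A$ side $b$ merely attaches to an edge of the matching to form at most a $P_3$. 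Convexity of $s\mapsto s^2$ then makes the splitting of a long component decrease the potential.

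The main obstacle is controlling the side effects of these balanced swaps so that the secondary potential genuinely drops. When the vertex $a$ is moved from $A$ into $B$, its two $B$-neighbours become internal to $B$ and may fuse existing path fragments into a new long path or even re-create a cycle, so the net change of $\sum_C|C|^2$ is not obviously negative. The real work is to show that in each configuration (a cycle of any length, or a path on at least four vertices) one can always choose the pair $(a,b)$ — for instance taking $a$ so that its $B$-neighbours are endpoints that are about to be detached — so that the move is cut-neutral, keeps $G[A]$ a union of $K_1,K_2,P_3$, and strictly decreases the potential; a separate check is needed for the degenerate extremes (when the matching $A$ is empty, or when a component of $B$ is a short cycle such as a $C_3$ or $C_4$), and, for multigraphs, for parallel edges and loops. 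Establishing that such an improving swap always exists, together with the resulting termination, is where essentially all the difficulty lies.
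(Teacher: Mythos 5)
You should first be aware that the paper does not prove Lemma \ref{lem:3-bisection} at all: it is imported from \cite{MM} and used as a black box (it supplies the starting bisection in the proof of Lemma \ref{lem:cubic}), so your attempt has to stand entirely on its own. Your Step 1 does stand: in a balanced bipartition maximizing the cut, the exchange inequality $d_A(u)+d_B(v)\le 3$ for non-adjacent $u\in A$, $v\in B$ (and $\le 2$ for adjacent pairs) shows that one part, say $A$, induces a matching; and since $e(G[A])=e(G[B])$ in any bisection of a cubic multigraph, the degenerate case where $A$ is independent forces $B$ to be independent as well, while otherwise the same inequality against a matched vertex of $A$ gives $\Delta(G[B])\le 2$, so $G[B]$ is a disjoint union of paths and cycles. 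All of this is correct and standard.

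Step 2, however, is the entire theorem, and it is asserted rather than proved --- as you yourself concede. Concretely: your basic cut-neutral move, exchanging an internal vertex $b$ of a big $B$-component with a matched vertex $a\in A$, drags $a$'s two $B$-neighbours together, and you get no say in where those neighbours sit; they are dictated by $G$. Cut-maximality does force them to have internal degree at most $1$ in $B$ (this follows from the adjacent-pair inequality $d_A(a)+d_B(v)\le 2$, a fact your sketch does not record), but they can still be endpoints of two long paths $Q$ and $R$ of $B$, in which case the fusion costs $(|Q|+|R|+1)^2-|Q|^2-|R|^2=2|Q||R|+2|Q|+2|R|+1$ in your potential, swamping the gain of less than $|P|^2$ from splitting the one component $P$ you were attacking; and if the two neighbours are the two ends of the same path, the move manufactures a new monochromatic cycle. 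Nothing rules out a graph in which \emph{every} matched vertex of $A$ has this defect, so ``choose $a$ so that its $B$-neighbours are endpoints about to be detached'' is not an option you are free to exercise. Moreover, after one such exchange $A$ is no longer a matching, so the Step 1 structure cannot be reused verbatim in subsequent iterations, and the multigraph cases (digons as $2$-cycles in $B$) add further branches. So the existence of a cut-neutral, structure-preserving, potential-decreasing swap --- which is exactly what the lemma amounts to --- is the missing idea, not a checkable detail. This is consistent with the fact that the literature does not obtain the result by local search (the cubic case goes back to Esperet, Mazzuoccolo and Tarsi, with \cite{MM} extending it), and with the present paper's choice to cite it rather than reprove it.
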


\begin{lemma}\label{lem:forest}
		If $F$ is a forest with at most $|V(F)|/2$ edges, then there is a biparite subgraph $B\in \mathcal{B}_b(F)$ such that $E(B)=E(F)$.
\end{lemma}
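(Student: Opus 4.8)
The plan is to build a single bipartite subgraph $B$ whose two sides come from properly $2$-colouring the non-trivial components of $F$ and then rebalancing the two sides using the (plentiful) isolated vertices. The whole point will be that the hypothesis $|E(F)|\le |V(F)|/2$ guarantees there are enough isolated vertices to absorb the imbalance created by the larger components.

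First I would extract the structural consequence of the hypothesis. Writing $n=|V(F)|$ and letting $c$ be the number of connected components, the forest identity $|E(F)|=n-c$ turns $|E(F)|\le n/2$ into $c\ge n/2$, equivalently $\sum_j (n_j-2)\le 0$, where $n_j$ is the order of the $j$-th component. Each isolated vertex contributes $-1$ to this sum, so if $s$ denotes the number of isolated vertices, I obtain $s\ge \sum_{\text{non-trivial } j}(n_j-2)$.

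Next, for each non-trivial component $T_j$ (a tree on $n_j\ge 2$ vertices) I use its unique proper $2$-colouring, giving colour classes of sizes $a_j\ge b_j\ge 1$, and set $d_j=a_j-b_j$. Since $b_j\ge 1$ forces $a_j\le n_j-1$, one has $d_j=2a_j-n_j\le n_j-2$. Summing over non-trivial components and combining with the previous inequality gives the key estimate $D:=\sum_j d_j\le \sum_j(n_j-2)\le s$; that is, there are at least as many isolated vertices as the total imbalance of the non-trivial components. I expect this estimate to be the real crux of the argument, and it is the only place the hypothesis is actually used.

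Finally I would assemble $B$ as a single bipartite subgraph (permitted, since the pieces of a $B_i$ need not be connected). Put the larger colour class of every non-trivial component into $X$ and the smaller class into $Y$, so that $|X|-|Y|=D$; then move $D$ of the $s$ isolated vertices into $Y$ and discard the remaining isolated vertices. Now $|X|=|Y|$, and because each colour class is independent in its own tree while distinct components share no edges, $F[X]$ and $F[Y]$ contain no edges. Since every edge of $F$ joins the two classes of its component, all edges are captured, giving $E(B)=E(F)$ and hence $B\in\mathcal{B}_b(F)$ with the desired property. Everything after the inequality $s\ge D$ is a direct verification, so the estimate above is where I would concentrate the effort.
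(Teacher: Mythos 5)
Your proof is correct and is essentially the paper's own argument: both derive from the forest identity that the isolated vertices outnumber the total bipartition imbalance (your $D\le\sum_j(n_j-2)\le s$ is exactly the paper's $\sum_{i\ge 2}m_i(i-1)\le m_0$, via $n_j=i+1$), and then use those isolated vertices to equalize the partite sets. The only cosmetic difference is that you balance globally in a single bipartite block, while the paper balances each tree separately into its own $B_i$; both satisfy the definition of $\mathcal{B}_b(F)$.
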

\begin{proof}
	Let $c(F)$ be the number of connected components in $F$, $m_i$ the number of components with $i$ edges and $t$ the maximum size of a component in $F$. Since $F$ is a forest $|E(F)|=|V(F)|-c(F)\leq |V(F)|/2$,
which implies that $|V(F)|/2\leq c(F)=\sum_{i=0}^t m_i$. Thus,
\[\sum_{i=1}^t i \, m_i =|E(F)|\leq |V(F)|/2\leq \sum_{i=0}^t m_i,\]
which means that $\sum_{i=2}^tm_i(i-1)\leq m_0$. Note that for any $i\geq 2$, a tree with $i$ edges is a bipartite graph in which the sizes of two partite sets differ by at most $i-1$ vertices, and therefore, we need at most $\sum_{i=2}^tm_i(i-1)$ isolated vertices to balance the partite sets of every such tree. The inequality $\sum_{i=2}^tm_i(i-1)\leq m_0$ guarantees that we have enough isolated vertices to use. 
\end{proof}
\begin{lemma}\label{lem:cubic}
	Every weighted cubic multigraph $G$ has a bisection $(X,Y)$ such that the following holds.
		\begin{description}
		\item[(i)] $G[X]\cup G[Y]$ is a forest;
		\item[(ii)] $|(X,Y)|$ attains the maximum value among all bisection that satisfy (i);
		\item[(iii)] \label{lem:cubic3}  $\Delta(G[X])\leq 1$ and $|E(G[Y])|\leq |Y|/2$.
	\end{description}
\end{lemma}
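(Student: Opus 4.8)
The plan is to let $(X,Y)$ be a forest bisection (i.e., one satisfying (i)) whose cut $|(X,Y)|$, the number of edges crossing the cut, is as large as possible, and then to show that this extremality already forces (iii). Lemma~\ref{lem:3-bisection} provides a $3$-bisection of $G$, in which every component of the internal graph is a tree on at most three vertices; hence the family of bisections satisfying (i) is non-empty, and as $G$ has only finitely many bisections this family contains a member $(X,Y)$ maximising $|(X,Y)|$. This yields (i) and (ii) at once, so the entire content of the lemma is (iii). To have enough room for the exchange arguments below, I would in addition break ties by a secondary objective, choosing among all maximum forest bisections one that minimises $\sum_{v\in X}\binom{d_X(v)}{2}$, where $d_X(v)$ is the number of neighbours of $v$ lying in $X$; this quantity is $0$ exactly when $\Delta(G[X])\le 1$.

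The driving tool is a single-vertex relocation followed by a rebalancing swap. Suppose $v\in X$ has $d_X(v)\ge 2$; since $G$ is cubic, $d_X(v)\in\{2,3\}$ and $v$ has at most one neighbour in $Y$. Relocating $v$ to $Y$ changes the cut by $d_X(v)-d_Y(v)\ge 1$, so it does not shrink the cut; it cannot create a cycle in $G[Y]$ because $v$ is joined to $Y$ by at most one edge, and deleting $v$ only splits a tree of $G[X]$, so (i) survives. This relocation unbalances the two parts, so I would restore balance by moving a carefully selected vertex $z$ from $Y$ back to $X$ whose neighbours in $X$ all lie in distinct trees of $G[X]$ (to preserve (i)) and which is chosen so that the net change of $|(X,Y)|$ is non-negative. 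Either the combined move strictly increases $|(X,Y)|$, contradicting (ii), or it leaves $|(X,Y)|$ unchanged while strictly decreasing $\sum_{v\in X}\binom{d_X(v)}{2}$, contradicting the secondary choice. Hence the chosen bisection satisfies $\Delta(G[X])\le 1$.

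For the inequality $|E(G[Y])|\le |Y|/2$ I would run the mirror image of this argument. Since $G[Y]$ is a forest, $|E(G[Y])|=|Y|-c(G[Y])$, so the goal is equivalent to $c(G[Y])\ge |Y|/2$. If instead $|E(G[Y])|>|Y|/2$, then $G[Y]$ has average internal degree greater than $1$ and therefore contains a vertex $u$ with $d_Y(u)\ge 2$; as above $u$ has at most one neighbour in $X$, and relocating $u$ to $X$ increases the cut by $d_Y(u)-d_X(u)\ge 1$ without creating a cycle, after which a balance-restoring swap is performed. The extra subtlety here is that relocating $u$ into $X$ must not violate the matching property $\Delta(G[X])\le 1$ that we have just secured, which is why the vertex $z$ moved out of $X$ and the vertex $u$ moved in have to be chosen in tandem.

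The step I expect to be the main obstacle is precisely the existence of a suitable rebalancing vertex that simultaneously (a) preserves the forest property (i), (b) keeps the cut from decreasing, and (c) does not disturb the matching structure $\Delta(G[X])\le 1$ once it has been established. The awkward configurations are those in which every natural candidate $z$ has two of its neighbours inside one tree of $G[X]$ (so moving it would close a cycle) or has all three neighbours in $X$ (so the swap is cut-neutral and possibly potential-neutral); showing that in a cubic multigraph such obstructions can arise only when (iii) already holds is where the genuine case analysis lies, and is the place where cubicity, that each vertex meets exactly three edges, is used most heavily.
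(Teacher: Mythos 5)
Your overall strategy (maximise the cut over forest bisections obtained from Lemma \ref{lem:3-bisection}, then run exchange arguments) matches the paper's starting point, but both halves of (iii) are left genuinely unproved, and you flag this yourself: the existence of the ``carefully selected'' rebalancing vertex $z$ is not a technicality to be deferred — it is the whole content of the lemma. The paper closes this gap with an idea your one-sided relocation misses. Instead of moving $v$ out of $X$ and then hunting for a partner, it assumes without loss of generality that $\Delta(G[X])\le\Delta(G[Y])$ and, supposing $\Delta(G[X])\ge 2$, picks \emph{simultaneously} a vertex $v\in X$ with $d_{G[X]}(v)\ge 2$ and a vertex $w\in Y$ with $d_{G[Y]}(w)\ge 2$; the latter exists precisely because of the WLOG ordering. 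Swapping $v$ and $w$ keeps the parts balanced automatically; since each of $v,w$ has at most one neighbour in the opposite part (by cubicity), neither insertion can close a cycle, so (i) survives; and the cut gains at least $2+2=4$ edges while losing at most $1+1=2$, a strict increase contradicting (ii). Your approach cannot be repaired without rediscovering this: in the problematic case $\Delta(G[Y])\le 1$ there may be no admissible $z$ at all, and the correct move is then simply to relabel the two sides — which is exactly what the WLOG accomplishes. Your secondary tie-breaking potential does not substitute for this, because you never show that some legal move either raises the cut or lowers the potential.

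The second gap is that your plan to establish $|E(G[Y])|\le |Y|/2$ by a further exchange (with the added worry of preserving $\Delta(G[X])\le 1$) is both incomplete and unnecessary: the paper gets this half for free from a degree count. In a cubic multigraph, $3|X| = 2|E(G[X])| + |(X,Y)|$ and $3|Y| = 2|E(G[Y])| + |(X,Y)|$; since $|X|=|Y|$ (a cubic graph has even order, so the bisection is exact), these identities force $|E(G[Y])| = |E(G[X])|$, and $\Delta(G[X])\le 1$ already gives $|E(G[X])|\le |X|/2 = |Y|/2$. So once the first half of (iii) is in place, the second half is immediate — no second round of exchanges, and no tension with the matching structure of $G[X]$. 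Note also that this shows your implicit target in the third paragraph (pushing toward $\Delta(G[Y])\le 1$) is not attainable in general: $G[Y]$ may well contain paths with two edges; only the edge count, not the maximum degree, is controlled on that side.
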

\begin{proof}
By Lemma \ref{lem:3-bisection}, $G$ has a bisection  (not necessarily a 3-bisection) such  that both partition classes induce a forest.  Let $(X,Y)$ be a maximum bisection subject to the condition that  $G[X]\cup G[Y]$ is a forest. We will show that $(X,Y)$ also meets the last requirement. Assume without loss of the generality that $\Delta(G[X])\leq\Delta(G[Y])$.  We will first show that 
 \begin{equation} \label{cl:DGA=1} \Delta(G[X])\leq 1.\end{equation}  Suppose for a contradiction that $2\leq\Delta(G[X])\leq\Delta(G[Y])$. Then, there exist $v\in X$ and $w\in Y$ such that $d_{G[X]}(v)\geq 2$ and $d_{G[Y]}(w)\geq 2$. Since  both $v$ and $w$ have at most one neighbour in the other part, $G[X\cup\{w\}-\{v\}]$ and $G[Y\cup\{v\}-\{w\}]$ are still forests. However, $|(X\cup\{w\}-\{v\},Y\cup\{v\}-\{w\})|>|(X,Y)|$, a contradiction.
	  
	  By \eqref{cl:DGA=1}, every connected component of $G[X]$ is either an edge or a single vertex and therefore $|E(G[X])|\leq |X|/2$. Thus, since $G$ is 3-regular and $|X|=|Y|$ \AY{(as every graph has an even number of odd degree vertices)},  
	  
	  \begin{align*}
	  |E(G[Y])|& =\frac{3|Y|-|(X,Y)|}{2}=\frac{3|X|-|(X,Y)|}{2}\\
	  & =|E(G[X])|\leq |X|/2=|Y|/2,\end{align*}

	which completes the proof.
	  \end{proof}

	  \begin{theorem}
	  	Every weighted subcubic graph $G$ has a bisection with weight at least $\frac{2}{3}w(G)$.
	  \end{theorem}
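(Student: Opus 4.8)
The plan is to reduce to the cubic case and then combine Lemmas \ref{lem:cubic}, \ref{lem:forest} and \ref{lem:bbsg} with a one-line case analysis on the weight of the ``internal'' edges of a suitable bisection.

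First I would pass from subcubic to cubic multigraphs. Given a weighted subcubic $G=(V,E,w)$, I add edges of weight $0$ (allowing parallel edges) so that every vertex has degree exactly $3$, keeping the vertex set equal to $V$. This is possible whenever $|V|$ is even, since then the deficiency sequence $(3-d_G(v))_{v\in V}$ has even sum and, apart from a few degenerate sequences in which one vertex's deficiency exceeds the sum of all others, is realizable by a loopless multigraph. When $|V|$ is odd I first adjoin a single dummy vertex $z$ joined by zero-weight edges, so that the completed graph $\hat G$ has an even number of vertices. Because all added edges have weight $0$, we have $w(\hat G)=w(G)$ and the weight of any cut of $\hat G$ equals the weight of the corresponding cut of $G$. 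Crucially, the vertex set of $\hat G$ is $V$ (or $V\cup\{z\}$), so a balanced bisection of $\hat G$ restricts to a bisection of $G$: if $|V|$ is even the parts stay equal, and if $|V|$ is odd, deleting $z$ from its side yields parts differing by exactly $1$.

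Next I would prove the bound for a weighted cubic multigraph $\hat G$ on $n$ vertices. By Lemma \ref{lem:cubic} there is a bisection $(X,Y)$ with $\hat G[X]\cup \hat G[Y]$ a forest, $\Delta(\hat G[X])\le 1$ and $|E(\hat G[Y])|\le |Y|/2$. Since $\hat G[X]$ is then a matching, $|E(\hat G[X])|\le |X|/2$, so the forest $F:=\hat G[X]\cup \hat G[Y]$ satisfies
\[
|E(F)|=|E(\hat G[X])|+|E(\hat G[Y])|\le \tfrac{|X|}{2}+\tfrac{|Y|}{2}=\tfrac{n}{2}=\tfrac{|V(F)|}{2}.
\]
Each component of $F$ lies entirely inside $X$ or inside $Y$, so its bipartition classes are independent not only in $F$ but in $\hat G$; hence Lemma \ref{lem:forest} produces $B\in\mathcal B_b(\hat G)$ with $E(B)=E(F)$ and $w(B)=w(F)$. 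I now have two bisections of $\hat G$ to compare. The cut $(X,Y)$ itself is a bisection whose weight is exactly $w(\hat G)-w(F)$, since every edge of $\hat G$ is either cut by $(X,Y)$ or lies in $F$. On the other hand, applying Lemma \ref{lem:bbsg} to $B$ gives a bisection of weight at least $\tfrac{w(\hat G)+w(B)}{2}=\tfrac{w(\hat G)+w(F)}{2}$. Writing $f=w(F)/w(\hat G)$, the better of the two has weight at least $\max\{1-f,\ (1+f)/2\}\cdot w(\hat G)$, and $\max\{1-f,(1+f)/2\}\ge \tfrac23$ for every $f\in[0,1]$ (the two expressions coincide, both equal to $\tfrac23$, at $f=\tfrac13$). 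Thus $\hat G$, and hence $G$, has a bisection of weight at least $\tfrac23 w(\hat G)=\tfrac23 w(G)$.

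The main obstacle is the reduction step rather than the weight estimate: I must complete $G$ to a cubic multigraph \emph{without enlarging or unbalancing the vertex set}, because bisection is a balance constraint and an unrestricted cubic completion (for instance, taking two disjoint copies of $G$ and pairing up deficient vertices) would, after restricting back to $V$, yield only a large \emph{cut}, not a balanced one. Keeping the completion on $V$, with at most one dummy vertex to fix parity, is exactly what makes the balanced bisection of $\hat G$ descend to a genuine bisection of $G$; the few degenerate degree sequences where such a completion fails to exist must be dispatched by a separate direct argument.
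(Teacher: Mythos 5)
Your cubic core is essentially the paper's Case 1: compare the cut $(X,Y)$ supplied by Lemma \ref{lem:cubic}, of weight exactly $w(\hat G)-w(F)$, with the bisection supplied by Lemma \ref{lem:bbsg}, of weight at least $\tfrac{1}{2}(w(\hat G)+w(F))$; the worse of the two is minimized at $w(F)=\tfrac13 w(\hat G)$, giving $\tfrac23$. One step of yours needs repair, though: applying Lemma \ref{lem:forest} to the whole forest $F=\hat G[X]\cup\hat G[Y]$ only yields $B\in\mathcal{B}_b(F)$, not $B\in\mathcal{B}_b(\hat G)$. That lemma balances a tree by grouping it with isolated vertices of $F$, and nothing in its statement prevents a tree lying in $Y$ from being balanced by isolated vertices lying in $X$; a partite set mixing the two sides can contain a cut edge of $(X,Y)$ and then fails to be independent in $\hat G$. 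Your remark that each component of $F$ is induced in $\hat G$ is true but does not address this grouping issue. The fix is what the paper does: apply Lemma \ref{lem:forest} to $\hat G[Y]$ alone, whose hypothesis $|E(\hat G[Y])|\le |Y|/2$ is exactly item (iii) of Lemma \ref{lem:cubic}, and adjoin the matching $\hat G[X]$ as separate, already balanced, components.

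The genuine gap is the reduction. The ``degenerate degree sequences'' you defer are not a fringe case; they are precisely where the difficulty of the subcubic theorem is concentrated, and the paper spends most of its proof on them. Tracing your own reduction: for even $|V(G)|$ the completion on $V$ fails exactly when one vertex has degree $1$ and all others degree $3$ (the only other bad sequence, one isolated vertex plus one vertex of degree $2$, reduces to this by a weight-$0$ edge), and for odd $|V(G)|$ it fails exactly when one vertex has degree $2$ and all others degree $3$, which reduces to the same case by hanging a weight-$0$ pendant vertex. So the missing case is: even order, one vertex $z$ of degree $1$, the rest cubic. Here dummy vertices cannot be avoided, and then your argument genuinely breaks. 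The paper adds $x,y$ with a weight-$0$ double edge $xy$ and edges $xz,yz$; the double edge forces $x$ and $y$ to opposite sides of any bisection whose parts induce forests, so the cut-based bound survives restriction to $V$, but the bisection coming from Lemma \ref{lem:bbsg} is random and gives no control over which sides $x$ and $y$ land on, so it need not restrict to a bisection of $G$ at all. To get around this, the paper must build its bipartite subgraph inside $H-\{x,y\}=G$ itself, which requires $|E(H[Y\setminus\{y\}])|\le|Y\setminus\{y\}|/2$; this inequality can fail when $z\in X$, and the paper restores it with a vertex-swapping argument (its Subcase 2.2). That argument, or some substitute for it, is the actual content missing from your proposal: what you label a ``separate direct argument'' is the heart of the extension from cubic to subcubic graphs.
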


	 \begin{proof}
	 	
We may assume that $G$ has at most one vertex with degree not equal to $3$ as we may add edges of weight $0$ between any two vertices with degree less than $3$.  We consider three cases. 	

{\bf Case 1:} $G$ is 3-regular.  Let $(X,Y)$ be a bisection as in Lemma \ref{lem:cubic}. If $w(X,Y)\geq \frac{2}{3}w(G)$ then the proof is complete. If not,  by Lemma \ref{lem:forest}, there is a bipartite subgraph $B_1\in\mathcal{B}_b(G[Y])$ that contains $E(G[Y])$. $B_1$ together with the maximum matching in $G[X]$ forms a bipartite subgraph $B_2\in\mathcal{B}_b(G)$ that contains all edges in $G[X]\cup G[Y]$. Now  \[w(B_2)=w(G[X])+w(G[Y])=w(G)-w(X,Y)>\frac{1}{3}w(G),\] 
	 	and the result follows by Lemma \ref{lem:bbsg}.
	 	
	 	{\bf Case 2:} $G$ has one vertex with degree one, say $d_{G}(z)=1$. Then, we add two vertices $x$ and $y$, two multiple edges $xy$ with weight 0 as well as edges $xz$ and $yz$ with weight 0. Denote the resulting graph by $H$. Clearly, $H$ is 3-regular. Let $(X,Y)$ be the bisection obtained by Lemma \ref{lem:cubic}. Note that $x$ and $y$ are in different parts since $H[X]\cup H[Y]$ is a forest (w.l.o.g. we may assume that $x\in X$ and $y\in Y$). If $w(X,Y)\geq\frac{2}{3}w(H)$, then  $(X\setminus\{x\}, Y\setminus\{y\})$ is a desired bisection in $G$. Thus, we may assume that $w(H[X\setminus\{x\}]\cup H[Y\setminus\{y\}])>\frac{1}{3}w(H)$ and consider the following two subcases.
	 	
	 	{\bf Subcase 2.1:} $z\in Y$. Then since $|E(H[Y])|\leq |Y|/2$ and $yz\in H[Y]$, $|E(H[Y\setminus\{y\}])|\leq |Y|/2-1$. Thus, by Lemma \ref{lem:forest} we can find a bipartite subgraph in $\mathcal{B}_b(H[Y\setminus\{y\}])$, which together with the maximum matching in $H[X\setminus\{x\}]$ forms a bipartite subgraph $B\in \mathcal{B}_b(H-\{x,y\})$ that contains all edges in $H[X\setminus\{x\}]\cup H[Y\setminus\{y\}]$. Therefore, by Lemma \ref{lem:bbsg} we have a bisection in $H-\{x,y\}$ with weight at least $\frac{2}{3}w(H)=\frac{2}{3}w(G)$. Clearly, this is also a desired bisection of $G$.
	 	
	 	{\bf Subcase 2.2:} $z\in X$. If $\Delta(H[Y])=1$ then there is a matching that contains all edges in $H[X\setminus\{x\}] \cup H[Y\setminus \{y\}]$ and  the result follows by Corollary \ref{cor:mwmatching}. It is impossible that  $\Delta(H[Y])=3$, since, if this were the case, the bisection obtained by  swapping the vertex with degree 3 in $H[Y]$ and $z\in X$  would yield a larger bisection the partition classes of which would form a forest. Thus, we may assume that there is a vertex $w$ with degree 2 in $H[Y]$. Let $w'$ be the neighbour of $w$ in $X$. Then, $d_{H[X]}(w')=0$, for otherwise swapping $w$ with $w'$ would give a larger  bisection  the partition classes of which would induce a forest. Thus, we can now swap $w$ and $z$, and the number of edges in the bisection will not decrease. In addition, it is easy to check that the new bisection $(X',Y')$ still satisfy (i)-(iii) in Lemma \ref{lem:cubic} and $z\in Y'$. Therefore, we can now proceed as in Subcase 2.1.
	 	
	 	{\bf Case 3:} If $G$ has a vertex with degree two, say $d_{G}(z)=2$. Then $|V(G)|$ is odd. We add a new vertex $z'$ and an edge $zz'$ with weight 0. Denote the resulting graph by $H'$. By Case 2, $H'$ has a bisection $(X,Y)$ with weight $\frac{2}{3}w(H')=\frac{2}{3}w(G)$. Therefore, $(X\setminus\{z'\}, Y\setminus\{z'\})$ is a desired bisection of $G$. 
	 \end{proof}
	 
\section{Triangle-free Subcubic Graphs}\label{sec:triangle-free}


In this section, we show a lower bound for a maximum weight bisection of a weighted triangle-free subcubic graph, which is not a $K_{1,3}$. Indeed, we show that every triangle-free subcubic graph $G$, which is not a $K_{1,3}$, has a bisection with weight at least $\frac{613}{855}w(G)\approx 0.716959w(G)$. Note that the maximum bisection of the Petersen graph has at most 11 edges. Therefore, $11/15w(G)\approx0.733333w(G)$ may be the best possible lower bound. 

\subsection{Bridgress triangle-free subcubic graphs}
In this subsection, we prove the above-mentioned lower bound for bridgeless triangle-free subcubic graphs. 

Theorem \ref{thm:petersen} is a slight generalization of Petersen's matching theorem \cite{Peterson}, which can be proved by applying Tutte's theorem \cite{Tutte}.
(This result  can also be found in Exercise 16.4.8 of Bondy and Murty's textbook on graph theory \cite{BM2008}).
\begin{theorem}\label{thm:petersen}
		Let $G$ be any bridgeless cubic graph, and let $e$ be any edge in $G$. Then, $G$ has a perfect matching containing $e$. 
\end{theorem}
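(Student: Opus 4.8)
The plan is to reduce the existence of a perfect matching through $e=uv$ to an ordinary perfect-matching question and then verify Tutte's condition (Tutte's theorem \cite{Tutte}). Deleting both endpoints of $e$, set $H=G-\{u,v\}$. A perfect matching of $G$ containing $e$ is exactly a perfect matching of $H$ together with the edge $e$, so it suffices to prove that $H$ has a perfect matching. Since $G$ is cubic it has an even number of vertices, hence $|V(H)|$ is even, and I would invoke Tutte's theorem: $H$ has a perfect matching iff $o(H-S)\le |S|$ for every $S\subseteq V(H)$, where $o(\cdot)$ denotes the number of odd components.

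Fix $S\subseteq V(H)$ and write $S'=S\cup\{u,v\}$, so that $H-S=G-S'$ and $|S'|=|S|+2$. The key local estimate, which I would prove first, is that every odd component $C$ of $G-S'$ sends at least three edges into $S'$: the degree sum $3|C|$ is odd because $|C|$ is odd, so the number of edges leaving $C$ (all of which land in $S'$) is odd; being bridgeless rules out a single such edge, forcing at least three. Summing over the $o(G-S')$ odd components and comparing with the total number of edges leaving $S'$, namely $3|S'|-2e(S')$ where $e(S')$ is the number of edges inside $S'$, yields $3\,o(G-S')\le 3|S'|-2e(S')$.

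The main obstacle is that this inequality only gives $o(G-S')\le |S'|=|S|+2$, which is two short of Tutte's requirement $o\le |S|$. I would close this gap with two observations. First, because $uv=e$ is an edge with both ends in $S'$, we have $e(S')\ge 1$, and the estimate above immediately improves to $o(G-S')\le |S'|-1$. Second, a parity argument: since $|V(G)|$ is even and $|V(G)|-|S'|\equiv o(G-S')\pmod 2$, the quantity $o(G-S')$ has the same parity as $|S'|$; an integer of the same parity as $|S'|$ that is at most $|S'|-1$ is in fact at most $|S'|-2=|S|$. Hence $o(H-S)=o(G-S')\le |S|$ for every $S$, Tutte's condition holds, $H$ has a perfect matching, and adjoining $e$ gives the desired perfect matching of $G$. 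The delicate point throughout is extracting the full $-2$ of slack; the crucial inputs are bridgelessness (for the ``at least three edges'' bound) and the fact that $e$ lies inside $S'$ (for $e(S')\ge 1$), so that ordinary Petersen's theorem, where $S'$ need not contain an edge, is precisely the borderline case this refinement pushes past.
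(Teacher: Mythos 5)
Your proof is correct, and it is exactly the approach the paper has in mind: the paper gives no details at all, merely stating that the theorem ``can be proved by applying Tutte's theorem'' and pointing to Exercise 16.4.8 of Bondy and Murty. Your verification of Tutte's condition --- bridgelessness forcing at least three edges out of each odd component, the edge $e$ lying inside $S'$ supplying one unit of slack, and the parity argument supplying the second --- is a complete and correct implementation of that cited argument.
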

We first show \AY{that} the following multigraph version of the above theorem holds.

\begin{lemma}\label{lem:mt-gvpt}
	Let $G$ be any bridgeless cubic multigraph, and let $e$ be any edge in $G$. Then, $G$ has a perfect matching containing $e$. 
\end{lemma}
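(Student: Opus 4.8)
The plan is to reduce the multigraph statement to the simple-graph version in Theorem \ref{thm:petersen} by replacing parallel edges with a small simple gadget. First I would dispose of loops: if a vertex carried a loop, then its single remaining incident edge would be a bridge, contradicting bridgelessness, so $G$ has no loops, and every vertex is incident either to three distinct simple edges, to one double edge together with one simple edge, or to a triple edge. A triple edge is an entire component on two vertices $\{u,v\}$; such a component trivially has a perfect matching through any prescribed edge, so I can match it separately and delete it, reducing to the case in which every edge multiplicity is at most two. (One may also assume $G$ is connected by arguing component by component.)

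For each double edge, with endpoints $u,v$ and ``external'' simple edges $f_u$ at $u$ and $f_v$ at $v$ (note $f_u,f_v$ cannot join $u$ and $v$, else the class would be a triple edge), I would introduce two fresh vertices $a,b$ and replace the two parallel $uv$-edges by the five edges $ua,ub,va,vb,ab$. Since $u\ne v$ and $a,b$ are new, the resulting graph $G'$ is a simple cubic graph: each of $u,v,a,b$ has degree exactly $3$, while $f_u,f_v$ and all edges lying outside the gadgets are untouched. Because each vertex lies in at most one parallel class, these replacements are pairwise disjoint and can be performed simultaneously.

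The key structural fact is a correspondence between perfect matchings of $G$ and of $G'$ that is localized to each gadget. In any perfect matching $M'$ of $G'$ the vertices $a,b$ must be matched inside $\{u,a,b,v\}$, which forces exactly one of two patterns: either $ab\in M'$, whence $u,v$ are matched by $f_u,f_v$; or $\{ua,vb\}\subseteq M'$ or $\{ub,va\}\subseteq M'$, whence $u,v$ are both matched inside the gadget and $f_u,f_v$ are unused. The first pattern corresponds to a matching of $G$ using neither parallel edge, the second to one using a single parallel $uv$-edge; translating each gadget according to its pattern sends $M'$ to a perfect matching $M$ of $G$, and the process reverses. To prove the lemma I then apply Theorem \ref{thm:petersen} to $G'$. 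If $e$ is a simple edge it survives in $G'$, so a perfect matching of $G'$ through $e$ translates to one of $G$ through $e$. If $e$ is a parallel $uv$-edge I instead take a perfect matching of $G'$ through the gadget edge $ua$; since $ua\in M'$ leaves $v$ as the only free neighbour of $b$, the pattern $\{ua,vb\}$ is forced, and this translates to a matching of $G$ pairing $u$ with $v$, where I choose the representative parallel edge to be $e$.

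The main obstacle is verifying that $G'$ is again bridgeless, so that Theorem \ref{thm:petersen} indeed applies; I would do this by showing every edge of $G'$ lies on a cycle. Edges internal to a gadget clearly do (for instance $ua$ lies on the triangle using $ub$ and $ba$, and $ab$ lies on the triangle through $u$). For an edge $f$ inherited from $G$, bridgelessness of $G$ furnishes a cycle $C$ through $f$ in $G$; since any simple cycle of length greater than two uses at most one edge from each parallel class, I can reroute every parallel $uv$-edge traversed by $C$ through the path $u,a,v$ (or $u,b,v$) inside the corresponding gadget, obtaining a cycle of $G'$ containing $f$. Hence no edge of $G'$ is a bridge, and the reduction is complete.
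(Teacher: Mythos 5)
Your proof is correct and follows essentially the same route as the paper's: the identical five-edge gadget (two new vertices $a,b$ replacing each double edge), the same case analysis of how a perfect matching of $G'$ must behave on the gadget, and the same appeal to Theorem \ref{thm:petersen} with the edge $e$ forced either directly (simple edge) or via a gadget edge (parallel edge). You merely spell out details the paper leaves as observations, such as the bridgelessness of $G'$ and the exclusion of loops and triple edges.
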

\begin{proof}
We may assume that $G$ is connected. Then the result is trivial when there are three multiple edges between two vertices. Thus, we may assume that any pair of vertices in $V(G)$ has at most two multiple edges. For any two edges between vertices $x$ and $y$ in $G$, delete these two edges, add two new vertices $v_{xy}$ and $w_{xy}$, also edges $xv_{xy}$, $xw_{xy}$, $yv_{xy}$, $yw_{xy}$ and $v_{xy}w_{xy}$. Denote the resulting graph by $G'$. Observe that $G'$ is a bridgeless cubic graph. Note that if there are two edges between $x$ and $y$, then any perfect matching in $G'$ contains $\{xv_{xy},yw_{xy}\}$, $\{xw_{xy},yv_{xy}\}$ or $v_{xy}w_{xy}$. Thus, from any perfect matching $M'$ of $G'$, we can construct a perfect matching $M$ of $G$ by the following steps. 
\begin{itemize}
	\item If there are two edges between $x$ and $y$ in $G$, then we add one of them to $M$ if and only if $\{xv_{xy},yw_{xy}\}$ or $\{xw_{xy},yv_{xy}\}$ is contained in $M'$.
	 
	\item If there is only one edge between $x$ and $y$ in $G$, we add it to $M$ if and only if it is in $M'$. 
\end{itemize}

Observe that $M$ is a perfect matching of $G$. 
Thus, we are done by Theorem \ref{thm:petersen} \AY{(as we can force any edge in $G$ to belong to the matching by forcing the relevant
edge of $G'$ to be in $M'$).} 
\end{proof}

We will need to use the following lemma. The basic ideas of it come from Lemma 5.11 in \cite{GY}. For any matching $M$ of a graph $G$, let $G/M$ denote the graph obtained from $G$ by contracting each edge in $M$.
\begin{lemma}\label{lem:triangle-free cubic}
	Let $G=(V, E,w)$ be a triangle-free graph with a perfect matching $M$. If $\chi'(G/M)\leq k$, then there exists a bipartite subgraph $B_C\in \mathcal{B}_b(G)$ with weight at least $\frac{1}{k}w(G)+\frac{k-1}{k}w(M)$ and a bisection with weight at least $\frac{k+1}{2k}w(G)+\frac{k-1}{2k}w(M)$. 

In particular, if $\Delta(G)\leq 3$, then there exists a bipartite subgraph $B_C\in \mathcal{B}_b(G)$ with weight at least $\frac{1}{5}w(G)+\frac{4}{5}w(M)$ and a bisection with weight at least $\frac{3}{5}w(G)+\frac{2}{5}w(M)$.
\end{lemma}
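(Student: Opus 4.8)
The plan is to prove the bipartite-subgraph bound first and then read off the bisection bound from Lemma~\ref{lem:bbsg}, since the second claim is purely formal: once we have $B_C\in\mathcal B_b(G)$ with $w(B_C)\ge \frac1k w(G)+\frac{k-1}{k}w(M)$, Lemma~\ref{lem:bbsg} yields a bisection of weight at least $\frac{w(G)+w(B_C)}{2}\ge \frac{k+1}{2k}w(G)+\frac{k-1}{2k}w(M)$, which is exactly the claimed bound. So everything reduces to constructing $B_C$.

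To build $B_C$, I would fix a proper edge-colouring of $G/M$ with colour classes $F_1,\dots,F_k$ (possible since $\chi'(G/M)\le k$). Each $F_j$ is a matching of $G/M$, and since contracting $M$ only deletes the edges of $M$ (turning them into loops) while leaving every other edge of $G$ intact, the sets $F_1,\dots,F_k$ partition $E(G)\setminus M$ with $\sum_j w(F_j)=w(G)-w(M)$. By averaging, some class $F_i$ has $w(F_i)\ge \frac1k\big(w(G)-w(M)\big)$; I would set $B_C:=M\cup F_i$, so that $w(B_C)=w(M)+w(F_i)\ge \frac1k w(G)+\frac{k-1}{k}w(M)$, matching the target exactly. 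Note there is no slack, so not a single edge of $M\cup F_i$ may be discarded when checking membership in $\mathcal B_b(G)$.

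The crux is to verify $M\cup F_i\in\mathcal B_b(G)$, and this is where triangle-freeness enters. Because $F_i$ is a matching \emph{of $G/M$}, each contracted vertex — i.e.\ each edge of $M$ — is incident to at most one edge of $F_i$. Consequently every connected component of $M\cup F_i$ is either a single edge of $M$ (on two vertices) or two edges of $M$ joined by one edge of $F_i$ (a path $x_0x_1x_2x_3$ on four vertices); no larger component can occur. Each such component is bipartite with a balanced bipartition ($(1,1)$, or $(\{x_0,x_2\},\{x_1,x_3\})$ in the four-vertex case), and in the four-vertex case the two vertices of each part lie at distance two on the path, so an edge of $G$ joining them would create a triangle. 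Triangle-freeness therefore forces both parts to be independent in $G$, which is precisely the requirement for $\mathcal B_b(G)$; taking these components as the pieces $B_j$ gives $B_C\in\mathcal B_b(G)$ and completes the general statement.

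For the ``in particular'' case I would bound $\chi'(G/M)$. Each contracted vertex $[uv]$ has degree $d_G(u)+d_G(v)-2\le 4$, so $\Delta(G/M)\le 4$; when $G/M$ is simple, Vizing's Theorem (Theorem~\ref{Vizing}) gives $\chi'(G/M)\le 5$, which makes $k=5$ admissible and yields the stated constants $\frac15 w(G)+\frac45 w(M)$ and $\frac35 w(G)+\frac25 w(M)$. The hard part will be exactly the case where $G/M$ is \emph{not} simple: contracting $M$ creates a double edge between $[uv]$ and $[ww']$ precisely when $G$ contains a $4$-cycle $uww'v$ with $uv,ww'\in M$ and $uw,vw'\in E$, and such parallel edges are perfectly compatible with triangle-freeness (for $G=K_{3,3}$ the graph $G/M$ is a ``fat triangle'', whose chromatic index is $6$). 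Since the maximum edge multiplicity here is at most $2$, even the multigraph form of Vizing's Theorem gives only $\chi'(G/M)\le\Delta+\mu\le 6$, so obtaining the bound $5$ requires handling these $4$-cycle configurations directly. I would aim to show that whenever they force a sixth colour, $G$ is close enough to bipartite that $w(B_C)\ge\frac15 w(G)+\frac45 w(M)$ already holds for a directly chosen $B_C$ (trivially $B_C=G$ when $G$ is bipartite), so that $k=5$ can be used in all remaining cases.
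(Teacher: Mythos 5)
Your reduction of the bisection bound to the bipartite-subgraph bound via Lemma~\ref{lem:bbsg} is exactly right, and your construction for the general-$k$ statement (take a heaviest colour class $F_i$ of the contracted graph and check that the components of $M\cup F_i$ are single matching edges or $4$-vertex paths whose balanced parts are independent by triangle-freeness) is correct \emph{under your multigraph reading of $G/M$}. But that reading creates a genuine gap precisely where the lemma is actually used: the ``in particular'' case. As you yourself observe, for triangle-free subcubic $G$ the multigraph $G/M$ can have chromatic index $6$ (for $G=K_{3,3}$ it is the fat triangle), so $k=5$ is not available, and your proposed repair --- ``whenever a sixth colour is forced, $G$ is close enough to bipartite'' --- is a hope, not an argument. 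Making it rigorous is nontrivial: you would need to show that a fat triangle in $G/M$ can only come from a $K_{3,3}$ component of $G$, and, worse, that the \emph{absence} of fat triangles forces $\chi'(G/M)\le 5$ for these multigraphs; the latter is a Goldberg--Seymour-type density statement ($\chi'\ge\Delta+2$ implies a dense odd subset), which is far heavier machinery than anything the lemma should need. So the subcubic conclusion, which is what the rest of the paper relies on, is not established by your proposal.

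The paper's proof differs in one small but decisive step that removes this obstacle: it never works with a multigraph. It defines $G/M$ so that $v_{ab}$ and $v_{cd}$ are joined by a \emph{single} edge whose weight is the total weight of all $G$-edges between $\{a,b\}$ and $\{c,d\}$, i.e.\ the weight of the (triangle-freeness forces bipartite, balanced, with independent parts) induced subgraph $G[\{a,b,c,d\}]$ apart from its matching edges. This simple graph has $\Delta(G/M)\le 4$, so Vizing's Theorem gives $\chi'(G/M)\le 5$; a heaviest colour class $M'$ has weight at least $\frac{1}{5}\bigl(w(G)-w(M)\bigr)$, and each selected edge $v_{ab}v_{cd}$ is expanded back to the whole induced subgraph $G[\{a,b,c,d\}]$, so the weight of \emph{both} parallel edges (when a $4$-cycle is present) is captured and nothing is lost: $w(B)\ge w(M)+w(M')$. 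In other words, aggregating parallel edges into one weighted edge, and using induced $4$-vertex subgraphs rather than bare paths as the pieces of $B_C$, is exactly the missing idea; with it, your argument becomes the paper's and the subcubic case follows from plain (simple-graph) Vizing with no exceptional configurations to analyse.
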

\begin{proof}
	 For any edge $ab\in M$, we denote by $v_{ab}$ the vertex corresponding to it in $G/M$. Let the weight  $w(v_{ab}v_{cd})$ of any edge $v_{ab}v_{cd}$ be the sum of the weights of edges in the bipartite graph $G[\{a,b,c,d\}]$ (the graph is bipartite as it is triangle-free). 
	 
     Since $\chi'(G/M)\leq k$, there is a matching $M'$ in $G/M$ with weight at least $\frac{1}{k}w(G/M)=\frac{1}{k}(w(G)-w(M))$. Observe that each edge $v_{ab}v_{cd}\in M'$ corresponds to the bipartite subgraph $G[\{a,b,c,d\}]$ which is clearly in $\mathcal{B}_b(G)$ since $G[\{a,b,c,d\}]$ has a perfect matching, and $G$ is triangle-free. Let $B$ be the union of these bipartite subgraphs as well as edges in $M$ which are not contained in any of these bipartite subgraphs. Then,  
	 	\begin{eqnarray*}
	 	w(B)&\geq& w(M)+w(M')\\
	 	&\geq&w(M)+\frac{1}{k}(w(G)-w(M))\\
	 	&=&\frac{1}{k}w(G)+\frac{k-1}{k}w(M),
	 \end{eqnarray*}
	 and therefore by Lemma \ref{lem:bbsg} we have that $G$ admits a bisection with weight at least $\frac{w(G)+w(B)}{2}\geq\frac{1}{2}(w(G)+\frac{1}{k}w(G)+\frac{k-1}{k}w(M))=\frac{k+1}{2k}w(G)+\frac{k-1}{2k}w(M)$.
	
	If $\Delta(G)\leq 3$, then $\Delta(G/M)\leq 4$. Thus, by Theorem \ref{thm:chi}, we have $\chi'(G/M)\leq 5$, and therefore we are done.
\end{proof}

\begin{theorem}\label{thm:main}
		Every bridgeless triangle-free subcubic graph $G$ has a bisection with weight at least $\theta\cdot w(G)$, where $\theta = \frac{613}{855} \approx 0.716959$.
\end{theorem}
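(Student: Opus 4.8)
The plan is to exhibit a heavy member $B$ of $\mathcal{B}_b(G)$ and then quote Lemma \ref{lem:bbsg}: a bisection of weight at least $\theta\,w(G)$ follows as soon as $w(B)\ge (2\theta-1)w(G)=\frac{371}{855}w(G)$. The most efficient way to build such a $B$ is Lemma \ref{lem:triangle-free cubic}. If $G$ has a perfect matching $M$ then, since $\Delta(G)\le 3$ forces $\chi'(G/M)\le 5$, contracting $M$ and taking a heavy matching $M'$ of $G/M$ lifts $M'$ to vertex-disjoint balanced bipartite $4$-vertex blocks; together with $M$ these form a $B\in\mathcal{B}_b(G)$ of weight at least $\frac15 w(G)+\frac45 w(M)$, yielding a bisection of weight at least $\frac35 w(G)+\frac25 w(M)$. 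Thus the whole problem reduces to finding a matching that is as heavy as possible relative to $w(G)$, and $\theta$ is the constant that survives after trading this off against the cut bound of Theorem \ref{thm:GY}.

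First I would settle the $3$-regular case, which is clean. For bridgeless cubic $G$, Lemma \ref{lem:mt-gvpt} (the multigraph form of Theorem \ref{thm:petersen}) shows every edge lies in a perfect matching, and, because every odd edge-cut of a bridgeless cubic graph has at least three edges, the uniform vector assigning $\frac13$ to each edge lies in the perfect-matching polytope; averaging weights over the corresponding convex combination of perfect matchings yields one matching $M$ with $w(M)\ge\frac13 w(G)$. Feeding this into the bound above gives a bisection of weight at least $\frac35 w(G)+\frac25\cdot\frac13 w(G)=\frac{11}{15}w(G)$, which already exceeds $\theta\,w(G)$.

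The difficulty, and the reason the constant drops from $\frac{11}{15}$ to $\theta=\frac{613}{855}$, is the presence of degree-$2$ vertices, which bridgelessness permits (it only forbids degree $1$). Such a $G$ may have no perfect matching at all, and one cannot repair this by adding edges, since a new edge between two degree-$2$ vertices with a common neighbour would create a triangle and violate the hypothesis of Lemma \ref{lem:triangle-free cubic}. The plan here is to work with a heaviest available near-matching and a randomized balancing step: form the balanced bipartite blocks of $\mathcal{B}_b(G)$ around the matched edges and assign the leftover degree-$2$ vertices to the two sides probabilistically, so that the parts stay balanced in expectation while each unmatched edge is cut with a controlled probability. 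Estimating the expected cut weight reduces matters to a short list of local worst-case configurations, indexed by how the degree-$2$ vertices sit relative to the matching and to the cut furnished by Theorem \ref{thm:GY}, and $\theta=\frac{613}{855}$ emerges as the optimum of the resulting small linear program. It is precisely because these reductions can create parallel edges that the multigraph version, Lemma \ref{lem:mt-gvpt}, was needed rather than Theorem \ref{thm:petersen} alone.

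The hard part will be exactly this degree-$2$ analysis: guaranteeing a matching heavy enough when no perfect matching exists, while simultaneously keeping the induced parts balanced and preserving triangle-freeness. By contrast, the cubic case, the final appeal to Lemma \ref{lem:bbsg}, and the arithmetic that matches the constant are all routine once the right random construction and its extremal local configurations have been identified; the bridgeless hypothesis enters only to make the matchings of Lemma \ref{lem:mt-gvpt} available and to force odd cuts to be large.
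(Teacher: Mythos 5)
Your treatment of the $3$-regular case is correct and is genuinely different from the paper's: since every odd edge-cut of a bridgeless cubic (multi)graph has at least three edges, the vector $\frac{1}{3}\mathbf{1}$ lies in Edmonds' perfect matching polytope, so averaging over the resulting convex combination of perfect matchings gives some $M$ with $w(M)\geq \frac{1}{3}w(G)$, and Lemma \ref{lem:triangle-free cubic} then yields a bisection of weight $\frac{11}{15}w(G)>\theta\, w(G)$. Written out, this observation in fact proves Conjecture \ref{cj:2}, which the paper poses as open, so for cubic graphs your route is stronger than the paper's. Note, however, that the paper never needs a heavy matching at all: it takes an \emph{arbitrary} perfect matching $M$ and plays two bisection bounds against each other, $mb(H)\geq \frac{3}{5}w(H)+\frac{2}{5}w(M)$ and $mb(H)\geq \frac{4}{5}w(H)-\frac{71}{250}w(M)$, whose convex combination with weights $\frac{71}{171}$ and $\frac{100}{171}$ makes the $w(M)$ terms cancel and leaves exactly $\frac{613}{855}w(H)$.

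The genuine gap is that everything which makes the theorem hard---and everything that produces the constant $\frac{613}{855}$---lives in the non-regular case, and there your proposal is a plan rather than a proof. Three concrete problems. First, Lemma \ref{lem:triangle-free cubic} requires a \emph{perfect} matching, which need not exist when $G$ has degree-$2$ vertices or odd order; your claim that this cannot be repaired by adding edges is wrong, since the paper repairs it in exactly that way: it adds weight-$0$ \emph{parallel} edges along the paths induced by degree-$2$ vertices (parallel edges create no triangles) and then weight-$0$ edges between the remaining degree-$2$ vertices, proving that at most two such vertices survive (Claim \ref{cl:2}) and extracting a perfect matching of the resulting cubic multigraph via Lemma \ref{lem:mt-gvpt} (Claim \ref{cl:pathp}). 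Second, your proposed trade-off partner is Theorem \ref{thm:GY}, but that theorem bounds a maximum \emph{cut}, not a bisection, and you give no mechanism for rebalancing a cut without losing weight; the paper's second bound comes instead from an explicit random choice of bipartite subgraphs in $\mathcal{B}_b$ on the cycles (and possible path) of $H-M$, with each edge covered with probability at least $\frac{5}{8}$, degraded to $\frac{3}{5}$ on $5$-cycles, plus a delicate case analysis of chorded $7$- and $11$-cycles, followed by Lemma \ref{lem:bbsg}. Third, as a consequence, the assertion that $\theta=\frac{613}{855}$ ``emerges as the optimum of a small linear program'' is unsupported: you have exhibited only one of the two linear bounds whose combination yields $\theta$, so there is nothing yet to optimize, and no step of your sketch forces the exceptional role of $5$-cycles from which the numerical value actually arises. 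As it stands, the proposal proves the theorem for cubic $G$ (indeed with the better constant $\frac{11}{15}$) but not for bridgeless subcubic $G$, which is the statement of Theorem \ref{thm:main}.
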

\begin{proof}
Without loss of generality, we may assume that $G$ is 2-edge-connected as proving this case immediately implies the general bridgeless case. 	   
 Now we assume that $|V(G)|$ is even. At the end of the proof of this theorem, we will show how to solve the odd case using similar arguments. 
   
Since $G$ is bridgeless, there is no vertex of degree one. For any $i\in \{2,3\}$, let $V_i(G)$ denote the set of vertices with degree $i$ in $G$. Note that since $|V(G)|$ and $|V_3(G)|$ are even, $|V_2(G)|$ is even. Since $\Delta(G[V_2(G)])\leq 2$ and $G$ is \AY{2-edge-connected}, $G[V_2(G)]$ is either an even cycle containing all vertices in $G$ or a collection of paths. If $G[V_2(G)]$ is an even cycle, then $G=G[V_2(G)]$. Thus, $G$ is a balanced bipartite graph by itself, and therefore partite sets of $G$ form a bisection with weight $w(G)$. In the following, we may assume that $G[V_2(G)]$ is a collection of paths, i.e., $G[V_2(G)]=\cup_{i=1}^tP_i$, \AY{where each $P_i$ is a path.} Then, for each $i\in [t]$, let $M_i$ be a maximum matching of $P_i$. We add a parallel edge with weight 0 to each edge in $M_i$. Note that vertices with degree two in the new multigraph now induce an independent set. Add an edge of weight 0 between two vertices of degree two recursively until adding any edge creates a triangle. Denote the resulting multigraph by $H$. Note that $H$ is also 2-edge-connected and triangle-free. Also, note that $w(G)=w(H)$ and every bisection of $H$ is a bisection of $G$ with the same weight. Thus, in the following, we only need to show that $H$ has a bisection with desired weights.
   
      \begin{claim}\label{cl:2}
       $|V_2(H)|=0$ or $2$.
       \end{claim} 
       
       {\bf The proof of Claim \ref{cl:2}. } Recall that every pair of vertices in $V_2(H)$ has at least one common neighbour and therefore in total there are at least $\binom{|V_2(H)|}{2}$ common neighbours (counting multiplicity). Let $N_i$ be the set of such common neighbours with $i\in\{2,3\}$ neighbours in $V_2(H)$. As $H$ is subcubic $N_2\cup N_3$ is the set of all such common neighbours. Note that every vertex in $N_3$ is a common neighbour of three pair of vertices in $V_2(H)$ and every vertex in $N_2$ is a common neighbour of one pair of vertices in $V_2(H)$. Therefore, we have $3|N_3|+|N_2|\geq \binom{|V_2(H)|}{2}$. Thus,
       \begin{eqnarray*}
       	2|V_2(H)|\geq|(N_2\cup N_3, V_2(H))|=3|N_3|+2|N_2|\geq 3|N_3|+|N_2|\geq \binom{|V_2(H)|}{2},
       \end{eqnarray*}
     which implies $|V_2(H)|\leq 5$. Thus, $|V_2(H)|\leq 4$ since $|V_2(H)|$ is even. If $|V_2(H)|< 4$ we are done. Thus, we may assume that $|V_2(H)|=4$. From the above inequalities, we have $3|N_3|+2|N_2|\leq 8$ and $3|N_3|+|N_2|\geq 6$ which implies $|N_2|\leq 2$. In addition, $|N_2|\neq 2$, since if so then $4\leq 3|N_3|\leq 4$ which contradicts the fact that $|N_3|$ is an integer. Thus, $|N_2|\leq 1$. If $|N_2|=0$, then $|N_3|=2$, and there are two vertices in $V_2(H)$ which don't have any common neighbour, a contradiction. If $|N_2|=1$, then $|N_3|=2$. Now the structure of $H[N_3\cup N_2\cup V_2(H)]$ is clear (see Fig. \ref{fig:N_2N_3}). Let $x$ be the vertex in $N_2$ and $y$ its neighbour outside $V_2(H)$.
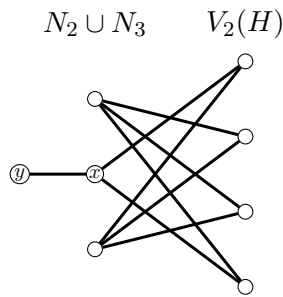
\begin{figure}
\centering
     \begin{tikzpicture}[scale=1]
     	     	\tikzstyle{vertexL}=[circle,draw, minimum size=8pt, scale=0.7, inner sep=0.5pt]
     	\tikzstyle{vertexS}=[circle,draw, minimum size=8pt, scale=0.5, inner sep=0.5pt]
     	\node (a) at (-1,2.5) [vertexL]{$y$};
     	\node (a0) at (0,3.5) [vertexL]{};
     	\node (a1) at (0,2.5) [vertexL]{$x$};
     	\node (a2) at (0,1.5) [vertexL]{};
     	\node (a3) at (2,1) [vertexL]{};
     	\node (a4) at (2,2) [vertexL]{};
     	\node (a5) at (2,3) [vertexL]{};
     	\node (a6) at (2,4) [vertexL]{};
     	 \node at (0,4.5) {$N_2\cup N_3$};
       \node at (2,4.5) {$V_2(H)$};
     	\draw[line width=0.04cm] (a) to (a1);  
     	\draw[line width=0.04cm] (a2) to (a6); 
     	\draw[line width=0.04cm] (a0) to (a3); 
     	\draw[line width=0.04cm] (a2) to (a5);  
     	\draw[line width=0.04cm] (a0) to (a5); 
     	\draw[line width=0.04cm] (a0) to (a4);  
     	\draw[line width=0.04cm] (a2) to (a4);   
     	\draw[line width=0.04cm] (a1) to (a3); 
     	\draw[line width=0.04cm] (a1) to (a6);  
     \end{tikzpicture}
     	\caption{ $H[N_3\cup N_2\cup V_2(H)]$}
     \label{fig:N_2N_3}
     \end{figure}
      One can observe that $xy$ is a bridge of $H$, a contradiction. \smallQED
       
       \begin{claim}\label{cl:pathp}
        $H$ has a perfect matching $M$ such that $H-M$ is either a collection of cycles (with no 3-cycles) or a collection of cycles (with no 3-cycles) with one additional path $P=p_1p_2\dots p_n$ where $p_2p_{n}\in M$ and $n\geq 5$.
       \end{claim}
       
       {\bf The proof of Claim \ref{cl:pathp}. } If $|V_2(H)|=0$, then by Lemma \ref{lem:mt-gvpt}, $H$ has a perfect matching. So, $H-M$ is 2-regular and thus a collection of cycles with no 3-cycles (since $H$ is triangle-free). If $|V_2(H)|=2$, then we assume that $V_2(H)=\{p_1,p_n\}$. Let $p_2$ be the common neighbour of $p_1$ and $p_n$. By Lemma \ref{lem:mt-gvpt}, $H+p_1p_n$ has a matching $M$ containing $p_2p_n$ and therefore does not contain $p_1p_n$. Thus, $M$ is also a perfect matching of $H$. Since $p_1$ and $p_n$ are the only two vertices with degree one in $H-M$, $H-M$ is a collection of cycles and a path $P=p_1p_2\dots p_n$. Since $p_2p_n\in M$, $n\geq 4$. And if $n=4$, then $p_2p_3p_4p_2$ is a triangle, a contradiction to $H$ is triangle-free. Thus, $n\geq 5$. \smallQED

       \vspace{2mm}
       
\GG{
\begin{claim}\label{cl:green}
 $H$ has a bisection $B^*$ such that 
 \begin{equation}\label{eq:1}
           w(B^*)\geq\frac{3}{5}w(H)+\frac{2}{5}w(M).
       \end{equation}      
 \end{claim}
  {\bf The proof of Claim \ref{cl:green}. } 
  Let $H_1$ be a new (simple) graph obtained from $H$ by deleting one edge with weight 0 from all multiple edges in $H$. Note that $M$ is still a perfect matching of $H_1$, $w(H_1)=w(H)$ and therefore by applying Lemma \ref{lem:triangle-free cubic}, $H_1$ has a bisection $B^*$ such that
      \[w(B^*)\geq\frac{3}{5}w(H_1)+\frac{2}{5}w(M)=\frac{3}{5}w(H)+\frac{2}{5}w(M).\]
Clearly, the bisection is also a bisection of $H$ with the same weight. \smallQED
}
       
       \vspace{2mm}
       
    In Claims \ref{cl:neq5711}-\ref{cl:pathcase}, we will define a probability space of some bipartite subgraphs in $\mathcal{B}_b(H)$ for each cycle and path in $H-M$. For simplicity, we call the space a random bipartite subgraph. Then, we will use its expectation to show another lower bound, which in combination with (\ref{eq:1}) implies our result. 

\GG{In Claims  \ref{cl:neq5711} -- \ref{cl:c11} we do not assume that $|V(G)|$ is even (in the other claims, we do assume it). This is because  Claims  \ref{cl:neq5711} -- \ref{cl:c11} are used in cases when $|V(G)|$ is even and odd.}
   
       \begin{claim}\label{cl:neq5711}
 Let $C$ be a cycle in $H-M$.   	If $|C|\neq 5,7$ or $11$, then there exists a random bipartite subgraph $B_C\in \mathcal{B}_b(H[V(C)])$ such that $\mathbb{E}(w(B_C))\geq \frac{5}{8}w(C)$ and every vertex is not in $V(B_C)$ with probability \AY{exactly} $1/8$.
      \end{claim}
      {\bf The proof of Claim \ref{cl:neq5711}. } Let $l=4k+r$ where $k \geq 1$ and $r \in \{0,1,2,3\}$ and let \AY{$C=c_0 c_1 c_2 \ldots c_{l-1}c_0$} be a cycle \AY{in $H-M$.} Choose $i$ from $[l]$ randomly and uniformly. 
      Let $B_C\in \mathcal{B}_b(H[V(C)])$ \AY{be defined as follows.
 \begin{itemize}
        \item $\bigcup_{t=0}^{k-1} H[\{c_{i+4t},c_{i+4t+1}, c_{i+4t+2},c_{i+4t+3}\}]$,
    \end{itemize}
}


 where indices are taken modulo $l$. Note that $B_C$ covers $3k$ edges of $C$, and $r$ vertices are not in $V(B_C)$. 
 
 We first consider the case when $7r \leq 4k$. In this case let $x=7(4k+r)/(32k)$ and note that $0 \leq x \leq 1$. Pick one of the randomly generated $B_C$ above with probability $x$ and let $B_C$ be a null graph (i.e., $B_C=(\emptyset,\emptyset)$) with probability $1-x$. The probability that a vertex is not included in $V(B_C)$ is as follows:
      \[
      x \frac{r}{4k+r} + (1-x) \cdot 1  =  1+ x \left( \frac{r}{4k+r} - 1 \right) 
      =  1+ \frac{7(4k+r)}{32k} \times \frac{-4k}{4k+r} =1 - \frac{7}{8}  =  \frac{1}{8}.
      \]
      
      The probability that any given edge of $C$ belongs to $B_C$ is as follows:
      
      \[
      x \frac{3k}{4k+r} + (1-x) \cdot 0  =  \frac{7(4k+r)}{32k} \times \frac{3k}{4k+r} 
      =  \frac{21}{32}  =  \frac{5.25}{8}  >  \frac{5}{8}.
      \]    
      This completes the case when $7r \leq 4k$. 
      
      Now assume that $7r > 4k$. Clearly, $r$ can not be 0 or 1 as $|C|\neq 5$. Thus, $r \geq 2$. Let $y = (7r-4k)/16$ and note that $0 < y < 1$ as $|C|\neq 7$. 
     Let $B_C\in \mathcal{B}_b(H[V(C)])$ formed by 
      
      \begin{itemize}
      	\item $(\cup_{t=0}^{\frac{l-6}{4}} H[\{c_{i+4t},c_{i+4t+1}, c_{i+4t+2},c_{i+4t+3}\}])\cup \{c_{i+l-2}c_{i+l-1}\}$ when $r=2$;
      	\item $(\cup_{t=0}^{\frac{l-7}{4}} H[\{c_{i+4t},c_{i+4t+1}, c_{i+4t+2},c_{i+4t+3}\}])\cup \{c_{i+l-3}c_{i+l-2}\}$ when $r=3$,
      \end{itemize}
      with probability $y$, where indices are taken modulo $l$. Note that in this case, $B_C$ covers $3k+1$ edges of $C$ and $r-2$ vertices are not in $V(B_C)$.
      
       Let $B_C\in \mathcal{B}_b(H[V(C)])$ formed by 
      
      \begin{itemize}
      	\item $(\cup_{t=0}^{\frac{l-6}{4}} H[\{c_{i+4t},c_{i+4t+1}, c_{i+4t+2},c_{i+4t+3}\}])$ when $r=2$;
      	\item $(\cup_{t=0}^{\frac{l-7}{4}} H[\{c_{i+4t},c_{i+4t+1}, c_{i+4t+2},c_{i+4t+3}\}])$ when $r=3$,
      \end{itemize}
      with probability $1-y$, where indices are taken modulo $l$. Note that in this case, $B_C$ covers $3k$ edges of $C$ and $r$ vertices are not in $V(B_C)$.
    
      The probability that a vertex is not in $V(B_C)$ in the resulting subgraph is as follows:
      
      \[
      \begin{array}{rcl}
      	y \frac{r-2}{4k+r} + (1-y) \frac{r}{4k+r} & = & \frac{r}{4k+r} + y \frac{(r-2)-r}{4k+r} = \frac{r}{4k+r} + \frac{-2(7r-4k)}{16(4k+r)}  \\
      	& = & \frac{16r-14r+8k}{16(4k+r)} = \frac{1}{8}. \\
      \end{array}
      \]
      
      The probability that any given edge of $C$ belongs to the resulting subgraph is as follows:
      
      \[
      \begin{array}{rcl}
      	y \frac{3k+1}{4k+r} + (1-y) \frac{3k}{4k+r} & = & \frac{3k}{4k+r} + y \frac{3k+1-3k}{4k+r} = \frac{3k}{4k+r} + \frac{7r-4k}{16} \times \frac{1}{4k+r} \\
      	& = & \frac{48k +7r-4k}{16(4k+r)} = \frac{44k +7r}{64k+16r}.     \\
      \end{array}
      \]
      
  If $4k \geq 3r$ then  $\frac{44k +7r}{64k+16r} \geq \frac{44k +7r -4k + 3r }{64k+16r} = 5/8$. So we get the desired result when $4k \geq 3r$ and may therefore assume that $4k < 3r$. When $r=2$ this implies that $k=1$ and $l=6$ and when $r=3$ then $k \in \{1,2\}$ and $l \in \{7,11\}$ but $|C|\neq 7$ or $11$. Thus, the only remaining case is when $l=6$ and $C$ is a $6$-cycle. 
In this case, \AY{$G[V(C)]$ is bipartite as $G$ is triangle-free, and} let $B_C=C$ with probability $7/8$ and let $B_C$ be a null graph with probability $1/8$. \smallQED

          \begin{claim}\label{cl:c5}
      	If $C$ is  a 5-cycle in $H-M$, then there exists a random bipartite subgraph $B_C\in \mathcal{B}_b(H[V(C)])$ such that $\mathbb{E}(w(B_C))\geq \frac{3}{5}w(C)$ and every vertex is not in the $V(B_C)$ with probability $1/5$.
      \end{claim}
      {\bf The proof of Claim \ref{cl:c5}. } Let $C=c_1c_2c_3c_4c_5c_1$. For each $i\in [5]$, let $B_C=C-c_i$ with probability $1/5$. Then, $B_C$ is a desired bipartite subgraph in $\mathcal{B}_b(H[V(C)])$. \smallQED

      \begin{claim}\label{cl:c7}
      	If $C$ is a 7-cycle of $H-M$, then there exists a random bipartite subgraph $B_C\in \mathcal{B}_b(H[V(C)])$ such that $\mathbb{E}(w(B_C))\geq \frac{5}{8}w(C)$ and every vertex is not in the $V(B_C)$ with probability at least $1/8$ and at most $1/4$.
      \end{claim}
      
      {\bf The proof of Claim \ref{cl:c7}. }   If $C$ has no chord in $G$, then by picking a vertex $c \in V(C)$ at random (uniformly with probability $1/7$) and considering the bipartite subgraph $B_C=C-c\in \mathcal{B}_b(H[V(C)])$ we note that every edge of $C$ will belong to $B_C$ with probability $5/7$.  Moreover, every vertex in $V(C)$ will not belong to $B_C$ with probability $1/7$.
      
We now consider the case when $C$ has exactly one chord. As $H$ is triangle-free, the cycle and chord must be as in Fig. \ref{figC7}. Let $B_C$ be each bipartite subgraph \AY{in} the table on the right of Fig. \ref{figC7} with probability $1/8$. Every edge in $C$ will appear in $B_C$ with probability at least $5/8$ as the numbers next to each edge in Fig. \ref{figC7} indicate how many subgraphs contain this edge. Also, we note that each vertex in $C$ is not in $V(B_C)$ in at least one, and at most two, of the subgraphs in the table and thus with probability at least $1/8$ and at most $1/4$. 
      
      \2
      
      	\begin{figure}
      	\begin{minipage}{3cm}
      		\tikzstyle{vertexL}=[circle,draw, minimum size=8pt, scale=0.6, inner sep=0.5pt]
      		\begin{tikzpicture}[scale=0.4]
      			\node at (0,10) {  };
      			\node (a1) at (3,9) [vertexL]{};
      			\node (a2) at (5,7) [vertexL]{};
      			\node (a3) at (5,4) [vertexL]{};
      			\node (a4) at (5,1) [vertexL]{};
      			\node (a5) at (1,1) [vertexL]{};
      			\node (a6) at (1,4) [vertexL]{};
      			\node (a7) at (1,7) [vertexL]{};
      			
      			\draw[line width=0.04cm] (a1) to (a2);   \node at (4.5,8.3) {{\small 5}};
      			\draw[line width=0.04cm] (a2) to (a3);   \node at (5.6,5.5) {{\small 5}};
      			\draw[line width=0.04cm] (a3) to (a4);   \node at (5.6,2.5) {{\small 5}};
      			\draw[line width=0.04cm] (a4) to (a5);   \node at (3,0.5) {{\small 5}};
      			\draw[line width=0.04cm] (a5) to (a6);   \node at (0.4,2.5) {{\small 6}};
      			\draw[line width=0.04cm] (a6) to (a7);   \node at (0.4,5.5) {{\small 5}};
      			\draw[line width=0.04cm] (a7) to (a1);   \node at (1.5,8.3) {{\small 5}};
      			
      			\draw[line width=0.04cm] (a1) to (a4);			
      		\end{tikzpicture}

      	\end{minipage} 
      \begin{tabular}{|c|c|c|c|} \hline
      	\tikzstyle{vertexL}=[circle,draw, minimum size=8pt, scale=0.6, inner sep=0.5pt]
      	\begin{tikzpicture}[scale=0.3]
      		\node at (3,10) {  };
      		\node (a1) at (3,9) [vertexL]{}; 
      		\node (a2) at (5,7) [vertexL]{};
      		\node (a3) at (5,4) [vertexL]{};
      		\node (a4) at (5,1) [vertexL]{};
      		\node (a5) at (1,1) [vertexL]{};
      		\node (a6) at (1,4) [vertexL]{};
      		\node (a7) at (1,7) [vertexL]{};
      		\draw[line width=0.04cm] (a1) to (a2);
      		\draw[line width=0.04cm] (a2) to (a3);
      		\draw[line width=0.04cm] (a3) to (a4);
      		\draw[line width=0.04cm] (a4) to (a5);
      		\draw[line width=0.04cm] (a5) to (a6);
      		\draw[line width=0.04cm] (a1) to (a4);
      	\end{tikzpicture} & 
      	\tikzstyle{vertexL}=[circle,draw, minimum size=8pt, scale=0.6, inner sep=0.5pt]
      	\begin{tikzpicture}[scale=0.3]
      		\node at (3,10) {  };
      		\node (a1) at (3,9) [vertexL]{};
      		\node (a2) at (5,7) [vertexL]{};
      		\node (a3) at (5,4) [vertexL]{};
      		\node (a4) at (5,1) [vertexL]{};
      		\node (a5) at (1,1) [vertexL]{};
      		\node (a6) at (1,4) [vertexL]{};
      		\node (a7) at (1,7) [vertexL]{};
      		\draw[line width=0.04cm] (a1) to (a2);
      		\draw[line width=0.04cm] (a2) to (a3);
      		\draw[line width=0.04cm] (a3) to (a4);
      		\draw[line width=0.04cm] (a4) to (a5);
      		\draw[line width=0.04cm] (a7) to (a1);
      		\draw[line width=0.04cm] (a1) to (a4);
      	\end{tikzpicture} & 
      	\tikzstyle{vertexL}=[circle,draw, minimum size=8pt, scale=0.6, inner sep=0.5pt]
      	\begin{tikzpicture}[scale=0.3]
      		\node at (3,10) {  };
      		\node (a1) at (3,9) [vertexL]{};
      		\node (a2) at (5,7) [vertexL]{};
      		\node (a3) at (5,4) [vertexL]{};
      		\node (a4) at (5,1) [vertexL]{};
      		\node (a5) at (1,1) [vertexL]{};
      		\node (a6) at (1,4) [vertexL]{};
      		\node (a7) at (1,7) [vertexL]{};
      		\draw[line width=0.04cm] (a1) to (a2);
      		\draw[line width=0.04cm] (a2) to (a3);
      		\draw[line width=0.04cm] (a3) to (a4);
      		\draw[line width=0.04cm] (a6) to (a7);
      		\draw[line width=0.04cm] (a7) to (a1);
      		\draw[line width=0.04cm] (a1) to (a4);
      	\end{tikzpicture} & 
      	\tikzstyle{vertexL}=[circle,draw, minimum size=8pt, scale=0.6, inner sep=0.5pt]
      	\begin{tikzpicture}[scale=0.3]
      		\node at (3,10) {  };
      		\node (a1) at (3,9) [vertexL]{};
      		\node (a2) at (5,7) [vertexL]{};
      		\node (a3) at (5,4) [vertexL]{};
      		\node (a4) at (5,1) [vertexL]{};
      		\node (a5) at (1,1) [vertexL]{};
      		\node (a6) at (1,4) [vertexL]{};
      		\node (a7) at (1,7) [vertexL]{};
      		\draw[line width=0.04cm] (a1) to (a2);
      		\draw[line width=0.04cm] (a2) to (a3);
      		\draw[line width=0.04cm] (a5) to (a6);
      		\draw[line width=0.04cm] (a6) to (a7);
      		\draw[line width=0.04cm] (a7) to (a1);
      	\end{tikzpicture} \\ \hline
      	\tikzstyle{vertexL}=[circle,draw, minimum size=8pt, scale=0.6, inner sep=0.5pt]
      	\begin{tikzpicture}[scale=0.3]
      		\node at (3,10) {  };
      		\node (a1) at (3,9) [vertexL]{}; 
      		\node (a2) at (5,7) [vertexL]{}; 
      		\node (a3) at (5,4) [vertexL]{}; 
      		\node (a4) at (5,1) [vertexL]{};
      		\node (a5) at (1,1) [vertexL]{};
      		\node (a6) at (1,4) [vertexL]{}; 
      		\node (a7) at (1,7) [vertexL]{}; 
      		\draw[line width=0.04cm] (a3) to (a4);
      		\draw[line width=0.04cm] (a4) to (a5);
      		\draw[line width=0.04cm] (a5) to (a6);
      		\draw[line width=0.04cm] (a7) to (a1);
      	\end{tikzpicture} & 
      	\tikzstyle{vertexL}=[circle,draw, minimum size=8pt, scale=0.6, inner sep=0.5pt]
      	\begin{tikzpicture}[scale=0.3]
      		\node at (3,10) {  };
      		\node (a1) at (3,9) [vertexL]{};
      		\node (a2) at (5,7) [vertexL]{};
      		\node (a3) at (5,4) [vertexL]{};
      		\node (a4) at (5,1) [vertexL]{};
      		\node (a5) at (1,1) [vertexL]{};
      		\node (a6) at (1,4) [vertexL]{};
      		\node (a7) at (1,7) [vertexL]{};
      		\draw[line width=0.04cm] (a2) to (a3);
      		\draw[line width=0.04cm] (a4) to (a5);
      		\draw[line width=0.04cm] (a5) to (a6);
      		\draw[line width=0.04cm] (a6) to (a7);
      	\end{tikzpicture} & 
      	\tikzstyle{vertexL}=[circle,draw, minimum size=8pt, scale=0.6, inner sep=0.5pt]
      	\begin{tikzpicture}[scale=0.3]
      		\node at (3,10) {  };
      		\node (a1) at (3,9) [vertexL]{};
      		\node (a2) at (5,7) [vertexL]{};
      		\node (a3) at (5,4) [vertexL]{};
      		\node (a4) at (5,1) [vertexL]{};
      		\node (a5) at (1,1) [vertexL]{};
      		\node (a6) at (1,4) [vertexL]{};
      		\node (a7) at (1,7) [vertexL]{};
      		\draw[line width=0.04cm] (a1) to (a2);
      		\draw[line width=0.04cm] (a4) to (a5);
      		\draw[line width=0.04cm] (a5) to (a6);
      		\draw[line width=0.04cm] (a6) to (a7);
      	\end{tikzpicture} & 
      	\tikzstyle{vertexL}=[circle,draw, minimum size=8pt, scale=0.6, inner sep=0.5pt]
      	\begin{tikzpicture}[scale=0.3]
      		\node at (3,10) {  };
      		\node (a1) at (3,9) [vertexL]{};
      		\node (a2) at (5,7) [vertexL]{};
      		\node (a3) at (5,4) [vertexL]{};
      		\node (a4) at (5,1) [vertexL]{};
      		\node (a5) at (1,1) [vertexL]{};
      		\node (a6) at (1,4) [vertexL]{};
      		\node (a7) at (1,7) [vertexL]{};
      		\draw[line width=0.04cm] (a3) to (a4);
      		\draw[line width=0.04cm] (a5) to (a6);
      		\draw[line width=0.04cm] (a6) to (a7);
      		\draw[line width=0.04cm] (a7) to (a1);
      	\end{tikzpicture} \\ \hline
      \end{tabular}
      	\caption{On the left $C$ with a chord, on the right graphs $B_C$}\label{figC7}
      \end{figure}
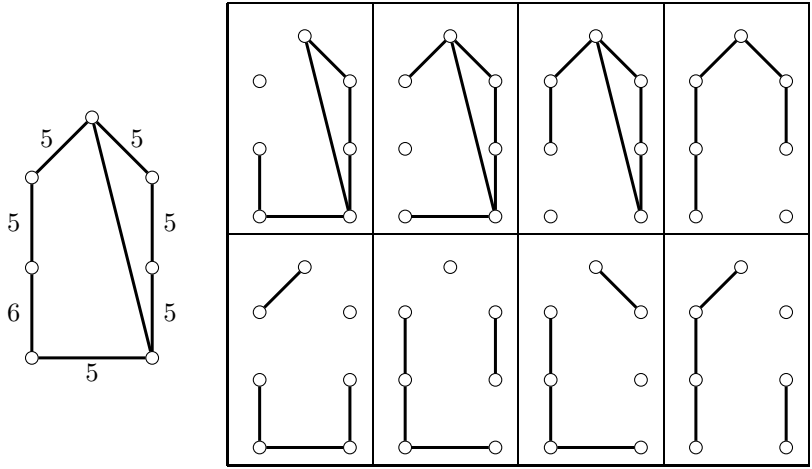
      
      \2
      
We now consider the case when the $7$-cycle has exactly two chords. There are only two non-isomorphic possibilities depicted on the left of Fig. \ref{figC7'}. In each case, let $B_C$ be each bipartite subgraph in the table on the right of Fig. \ref{figC7'} with probability $1/8$. One can observe that every edge in $C$ will appear in $B_C$ with probability at least $5/8$, and each vertex in $C$ is not in $V(B_C)$ with probability at least $1/8$ and at most $1/4$. 
      
      \2
      \begin{figure}

      \begin{minipage}{3cm}
      	Option 1
      	
      	\vspace{0.5cm} 
      	
      	\tikzstyle{vertexL}=[circle,draw, minimum size=8pt, scale=0.6, inner sep=0.5pt]
      	\begin{tikzpicture}[scale=0.4]
      		\node at (0,10) {  };
      		\node (a1) at (3,9) [vertexL]{};
      		\node (a2) at (5,7) [vertexL]{};
      		\node (a3) at (5,4) [vertexL]{};
      		\node (a4) at (5,1) [vertexL]{};
      		\node (a5) at (1,1) [vertexL]{};
      		\node (a6) at (1,4) [vertexL]{};
      		\node (a7) at (1,7) [vertexL]{};
      		
      		\draw[line width=0.04cm] (a1) to (a2);   \node at (4.5,8.3) {{\small 5}};
      		\draw[line width=0.04cm] (a2) to (a3);   \node at (5.6,5.5) {{\small 5}};
      		\draw[line width=0.04cm] (a3) to (a4);   \node at (5.6,2.5) {{\small 5}};
      		\draw[line width=0.04cm] (a4) to (a5);   \node at (3,0.5) {{\small 5}};
      		\draw[line width=0.04cm] (a5) to (a6);   \node at (0.4,2.5) {{\small 6}};
      		\draw[line width=0.04cm] (a6) to (a7);   \node at (0.4,5.5) {{\small 5}};
      		\draw[line width=0.04cm] (a7) to (a1);   \node at (1.5,8.3) {{\small 5}};
      		
      		\draw[line width=0.04cm] (a1) to (a4);
      		\draw[line width=0.04cm] (a7) to (a3);
      	\end{tikzpicture}
      \end{minipage} 
      \begin{tabular}{|c|c|c|c|} \hline
      \tikzstyle{vertexL}=[circle,draw, minimum size=8pt, scale=0.6, inner sep=0.5pt]
      \begin{tikzpicture}[scale=0.3]
      	\node at (3,10) {  };
      	\node (a1) at (3,9) [vertexL]{};
      	\node (a2) at (5,7) [vertexL]{};
      	\node (a3) at (5,4) [vertexL]{};
      	\node (a4) at (5,1) [vertexL]{};
      	\node (a5) at (1,1) [vertexL]{};
      	\node (a6) at (1,4) [vertexL]{};
      	\node (a7) at (1,7) [vertexL]{};
      	\draw[line width=0.04cm] (a1) to (a2);  
      	\draw[line width=0.04cm] (a2) to (a3);  
      	\draw[line width=0.04cm] (a3) to (a4);  
      	\draw[line width=0.04cm] (a4) to (a5);  
      	\draw[line width=0.04cm] (a5) to (a6);  
      	\draw[line width=0.04cm] (a1) to (a4);
      \end{tikzpicture} & 
      \tikzstyle{vertexL}=[circle,draw, minimum size=8pt, scale=0.6, inner sep=0.5pt]
      \begin{tikzpicture}[scale=0.3]
      	\node at (3,10) {  };
      	\node (a1) at (3,9) [vertexL]{};
      	\node (a2) at (5,7) [vertexL]{};
      	\node (a3) at (5,4) [vertexL]{};
      	\node (a4) at (5,1) [vertexL]{};
      	\node (a5) at (1,1) [vertexL]{};
      	\node (a6) at (1,4) [vertexL]{};
      	\node (a7) at (1,7) [vertexL]{};
      	\draw[line width=0.04cm] (a1) to (a2);   
      	\draw[line width=0.04cm] (a2) to (a3);   
      	\draw[line width=0.04cm] (a3) to (a4);   
      	\draw[line width=0.04cm] (a4) to (a5);   
      	\draw[line width=0.04cm] (a7) to (a1);   
      	\draw[line width=0.04cm] (a1) to (a4);
      	\draw[line width=0.04cm] (a7) to (a3);
      \end{tikzpicture} &
      \tikzstyle{vertexL}=[circle,draw, minimum size=8pt, scale=0.6, inner sep=0.5pt]
      \begin{tikzpicture}[scale=0.3]
      	\node at (3,10) {  };
      	\node (a1) at (3,9) [vertexL]{};
      	\node (a2) at (5,7) [vertexL]{};
      	\node (a3) at (5,4) [vertexL]{};
      	\node (a4) at (5,1) [vertexL]{};
      	\node (a5) at (1,1) [vertexL]{};
      	\node (a6) at (1,4) [vertexL]{};
      	\node (a7) at (1,7) [vertexL]{};
      	\draw[line width=0.04cm] (a1) to (a2);   
      	\draw[line width=0.04cm] (a2) to (a3);   
      	\draw[line width=0.04cm] (a3) to (a4);   
      	\draw[line width=0.04cm] (a6) to (a7);   
      	\draw[line width=0.04cm] (a7) to (a1);   
      	\draw[line width=0.04cm] (a1) to (a4);
      	\draw[line width=0.04cm] (a7) to (a3);
      \end{tikzpicture} &
      \tikzstyle{vertexL}=[circle,draw, minimum size=8pt, scale=0.6, inner sep=0.5pt]
      \begin{tikzpicture}[scale=0.3]
      	\node at (3,10) {  };
      	\node (a1) at (3,9) [vertexL]{};
      	\node (a2) at (5,7) [vertexL]{};
      	\node (a3) at (5,4) [vertexL]{};
      	\node (a4) at (5,1) [vertexL]{};
      	\node (a5) at (1,1) [vertexL]{};
      	\node (a6) at (1,4) [vertexL]{};
      	\node (a7) at (1,7) [vertexL]{};
      	\draw[line width=0.04cm] (a1) to (a2);   
      	\draw[line width=0.04cm] (a2) to (a3);   
      	\draw[line width=0.04cm] (a5) to (a6);   
      	\draw[line width=0.04cm] (a6) to (a7);   
      	\draw[line width=0.04cm] (a7) to (a1);   
      	\draw[line width=0.04cm] (a7) to (a3);
      \end{tikzpicture} \\ \hline
      \tikzstyle{vertexL}=[circle,draw, minimum size=8pt, scale=0.6, inner sep=0.5pt]
      \begin{tikzpicture}[scale=0.3]
      	\node at (3,10) {  };
      	\node (a1) at (3,9) [vertexL]{};
      	\node (a2) at (5,7) [vertexL]{};
      	\node (a3) at (5,4) [vertexL]{};
      	\node (a4) at (5,1) [vertexL]{};
      	\node (a5) at (1,1) [vertexL]{};
      	\node (a6) at (1,4) [vertexL]{};
      	\node (a7) at (1,7) [vertexL]{};
      	\draw[line width=0.04cm] (a3) to (a4);   
      	\draw[line width=0.04cm] (a4) to (a5);   
      	\draw[line width=0.04cm] (a5) to (a6);   
      	\draw[line width=0.04cm] (a7) to (a1);   
      \end{tikzpicture} &
      \tikzstyle{vertexL}=[circle,draw, minimum size=8pt, scale=0.6, inner sep=0.5pt]
      \begin{tikzpicture}[scale=0.3]
      	\node at (3,10) {  };
      	\node (a1) at (3,9) [vertexL]{};
      	\node (a2) at (5,7) [vertexL]{};
      	\node (a3) at (5,4) [vertexL]{};
      	\node (a4) at (5,1) [vertexL]{};
      	\node (a5) at (1,1) [vertexL]{};
      	\node (a6) at (1,4) [vertexL]{};
      	\node (a7) at (1,7) [vertexL]{};
      	\draw[line width=0.04cm] (a2) to (a3);   
      	\draw[line width=0.04cm] (a4) to (a5);   
      	\draw[line width=0.04cm] (a5) to (a6);   
      	\draw[line width=0.04cm] (a6) to (a7);   
      \end{tikzpicture} &
      \tikzstyle{vertexL}=[circle,draw, minimum size=8pt, scale=0.6, inner sep=0.5pt]
      \begin{tikzpicture}[scale=0.3]
      	\node at (3,10) {  };
      	\node (a1) at (3,9) [vertexL]{};
      	\node (a2) at (5,7) [vertexL]{};
      	\node (a3) at (5,4) [vertexL]{};
      	\node (a4) at (5,1) [vertexL]{};
      	\node (a5) at (1,1) [vertexL]{};
      	\node (a6) at (1,4) [vertexL]{};
      	\node (a7) at (1,7) [vertexL]{};
      	\draw[line width=0.04cm] (a1) to (a2);   
      	\draw[line width=0.04cm] (a4) to (a5);   
      	\draw[line width=0.04cm] (a5) to (a6);   
      	\draw[line width=0.04cm] (a6) to (a7);   
      \end{tikzpicture} &
      \tikzstyle{vertexL}=[circle,draw, minimum size=8pt, scale=0.6, inner sep=0.5pt]
      \begin{tikzpicture}[scale=0.3]
      	\node at (3,10) {  };
      	\node (a1) at (3,9) [vertexL]{};
      	\node (a2) at (5,7) [vertexL]{};
      	\node (a3) at (5,4) [vertexL]{};
      	\node (a4) at (5,1) [vertexL]{};
      	\node (a5) at (1,1) [vertexL]{};
      	\node (a6) at (1,4) [vertexL]{};
      	\node (a7) at (1,7) [vertexL]{};
      	\draw[line width=0.04cm] (a3) to (a4);   
      	\draw[line width=0.04cm] (a5) to (a6);   
      	\draw[line width=0.04cm] (a6) to (a7);   
      	\draw[line width=0.04cm] (a7) to (a1);   
      \end{tikzpicture} \\ \hline
      \end{tabular}
      
      \2
      
      \begin{minipage}{3cm}
      	Option 2 
      	
      	\vspace{0.5cm}
      	
      	\tikzstyle{vertexL}=[circle,draw, minimum size=8pt, scale=0.6, inner sep=0.5pt]
      	\begin{tikzpicture}[scale=0.4]
      		\node at (0,10) {  };
      		\node (a1) at (3,9) [vertexL]{};
      		\node (a2) at (5,7) [vertexL]{};
      		\node (a3) at (5,4) [vertexL]{};
      		\node (a4) at (5,1) [vertexL]{};
      		\node (a5) at (1,1) [vertexL]{};
      		\node (a6) at (1,4) [vertexL]{};
      		\node (a7) at (1,7) [vertexL]{};
      		
      		\draw[line width=0.04cm] (a1) to (a2);   \node at (4.5,8.3) {{\small 5}};
      		\draw[line width=0.04cm] (a2) to (a3);   \node at (5.6,5.5) {{\small 5}};
      		\draw[line width=0.04cm] (a3) to (a4);   \node at (5.6,2.5) {{\small 5}}; 
      		\draw[line width=0.04cm] (a4) to (a5);   \node at (3,0.5) {{\small 5}};
      		\draw[line width=0.04cm] (a5) to (a6);   \node at (0.4,2.5) {{\small 5}};
      		\draw[line width=0.04cm] (a6) to (a7);   \node at (0.4,5.5) {{\small 5}};
      		\draw[line width=0.04cm] (a7) to (a1);   \node at (1.5,8.3) {{\small 5}};
      		
      		\draw[line width=0.04cm] (a1) to (a4);
      		\draw[line width=0.04cm] (a2) to (a6);
      	\end{tikzpicture}
      \end{minipage} 
      \begin{tabular}{|c|c|c|c|} \hline
      \tikzstyle{vertexL}=[circle,draw, minimum size=8pt, scale=0.6, inner sep=0.5pt]
      \begin{tikzpicture}[scale=0.3]
      	\node at (3,10) {  };
      	\node (a1) at (3,9) [vertexL]{};
      	\node (a2) at (5,7) [vertexL]{};
      	\node (a3) at (5,4) [vertexL]{};
      	\node (a4) at (5,1) [vertexL]{};
      	\node (a5) at (1,1) [vertexL]{};
      	\node (a6) at (1,4) [vertexL]{};
      	\node (a7) at (1,7) [vertexL]{};
      	
      	\draw[line width=0.04cm] (a1) to (a2);   
      	\draw[line width=0.04cm] (a2) to (a3);  
      	\draw[line width=0.04cm] (a3) to (a4);  
      	\draw[line width=0.04cm] (a4) to (a5);  
      	\draw[line width=0.04cm] (a7) to (a1);  
      	
      	\draw[line width=0.04cm] (a1) to (a4);
      \end{tikzpicture} &
      \tikzstyle{vertexL}=[circle,draw, minimum size=8pt, scale=0.6, inner sep=0.5pt]
      \begin{tikzpicture}[scale=0.3]
      	\node at (3,10) {  };
      	\node (a1) at (3,9) [vertexL]{};
      	\node (a2) at (5,7) [vertexL]{};
      	\node (a3) at (5,4) [vertexL]{};
      	\node (a4) at (5,1) [vertexL]{};
      	\node (a5) at (1,1) [vertexL]{};
      	\node (a6) at (1,4) [vertexL]{};
      	\node (a7) at (1,7) [vertexL]{};
      	
      	\draw[line width=0.04cm] (a1) to (a2);
      	\draw[line width=0.04cm] (a2) to (a3);
      	\draw[line width=0.04cm] (a3) to (a4);
      	\draw[line width=0.04cm] (a6) to (a7);
      	\draw[line width=0.04cm] (a7) to (a1);
      	
      	\draw[line width=0.04cm] (a1) to (a4);
      	\draw[line width=0.04cm] (a2) to (a6);
      \end{tikzpicture} & 
      \tikzstyle{vertexL}=[circle,draw, minimum size=8pt, scale=0.6, inner sep=0.5pt]
      \begin{tikzpicture}[scale=0.3]
      	\node at (3,10) {  };
      	\node (a1) at (3,9) [vertexL]{};
      	\node (a2) at (5,7) [vertexL]{};
      	\node (a3) at (5,4) [vertexL]{};
      	\node (a4) at (5,1) [vertexL]{};
      	\node (a5) at (1,1) [vertexL]{};
      	\node (a6) at (1,4) [vertexL]{};
      	\node (a7) at (1,7) [vertexL]{};
      	
      	\draw[line width=0.04cm] (a1) to (a2);
      	\draw[line width=0.04cm] (a2) to (a3);
      	\draw[line width=0.04cm] (a5) to (a6);
      	\draw[line width=0.04cm] (a6) to (a7);
      	\draw[line width=0.04cm] (a7) to (a1);
      	
      	\draw[line width=0.04cm] (a2) to (a6);
      \end{tikzpicture} & 
      \tikzstyle{vertexL}=[circle,draw, minimum size=8pt, scale=0.6, inner sep=0.5pt]
      \begin{tikzpicture}[scale=0.3]
      	\node at (3,10) {  };
      	\node (a1) at (3,9) [vertexL]{};
      	\node (a2) at (5,7) [vertexL]{};
      	\node (a3) at (5,4) [vertexL]{};
      	\node (a4) at (5,1) [vertexL]{};
      	\node (a5) at (1,1) [vertexL]{};
      	\node (a6) at (1,4) [vertexL]{};
      	\node (a7) at (1,7) [vertexL]{};
      	
      	\draw[line width=0.04cm] (a1) to (a2);
      	\draw[line width=0.04cm] (a4) to (a5);
      	\draw[line width=0.04cm] (a5) to (a6);
      	\draw[line width=0.04cm] (a6) to (a7);
      	
      \end{tikzpicture} \\ \hline
      \tikzstyle{vertexL}=[circle,draw, minimum size=8pt, scale=0.6, inner sep=0.5pt]
      \begin{tikzpicture}[scale=0.3]
      	\node at (3,10) {  };
      	\node (a1) at (3,9) [vertexL]{};
      	\node (a2) at (5,7) [vertexL]{};
      	\node (a3) at (5,4) [vertexL]{};
      	\node (a4) at (5,1) [vertexL]{};
      	\node (a5) at (1,1) [vertexL]{};
      	\node (a6) at (1,4) [vertexL]{};
      	\node (a7) at (1,7) [vertexL]{};
      	
      	\draw[line width=0.04cm] (a1) to (a2);
      	\draw[line width=0.04cm] (a3) to (a4);
      	\draw[line width=0.04cm] (a4) to (a5);
      	\draw[line width=0.04cm] (a5) to (a6);
      	
      \end{tikzpicture} & 
      \tikzstyle{vertexL}=[circle,draw, minimum size=8pt, scale=0.6, inner sep=0.5pt]
      \begin{tikzpicture}[scale=0.3]
      	\node at (3,10) {  };
      	\node (a1) at (3,9) [vertexL]{};
      	\node (a2) at (5,7) [vertexL]{};
      	\node (a3) at (5,4) [vertexL]{};
      	\node (a4) at (5,1) [vertexL]{};
      	\node (a5) at (1,1) [vertexL]{};
      	\node (a6) at (1,4) [vertexL]{};
      	\node (a7) at (1,7) [vertexL]{};
      	
      	\draw[line width=0.04cm] (a2) to (a3);
      	\draw[line width=0.04cm] (a3) to (a4);
      	\draw[line width=0.04cm] (a4) to (a5);
      	\draw[line width=0.04cm] (a7) to (a1);
      	
      \end{tikzpicture} & 
      \tikzstyle{vertexL}=[circle,draw, minimum size=8pt, scale=0.6, inner sep=0.5pt]
      \begin{tikzpicture}[scale=0.3]
      	\node at (3,10) {  };
      	\node (a1) at (3,9) [vertexL]{};
      	\node (a2) at (5,7) [vertexL]{};
      	\node (a3) at (5,4) [vertexL]{};
      	\node (a4) at (5,1) [vertexL]{};
      	\node (a5) at (1,1) [vertexL]{};
      	\node (a6) at (1,4) [vertexL]{};
      	\node (a7) at (1,7) [vertexL]{};
      	
      	\draw[line width=0.04cm] (a3) to (a4);
      	\draw[line width=0.04cm] (a5) to (a6);
      	\draw[line width=0.04cm] (a6) to (a7);
      	\draw[line width=0.04cm] (a7) to (a1);
      	
      \end{tikzpicture} & 
      \tikzstyle{vertexL}=[circle,draw, minimum size=8pt, scale=0.6, inner sep=0.5pt]
      \begin{tikzpicture}[scale=0.3]
      	\node at (3,10) {  };
      	\node (a1) at (3,9) [vertexL]{};
      	\node (a2) at (5,7) [vertexL]{};
      	\node (a3) at (5,4) [vertexL]{};
      	\node (a4) at (5,1) [vertexL]{};
      	\node (a5) at (1,1) [vertexL]{};
      	\node (a6) at (1,4) [vertexL]{};
      	\node (a7) at (1,7) [vertexL]{};
      	
      	\draw[line width=0.04cm] (a2) to (a3);
      	\draw[line width=0.04cm] (a4) to (a5);
      	\draw[line width=0.04cm] (a5) to (a6);
      	\draw[line width=0.04cm] (a6) to (a7);
      	
      \end{tikzpicture} \\ \hline
      \end{tabular}
      \caption{On the left $C$ with two chords, on the right graphs $B_C$}\label{figC7'}
      \end{figure}
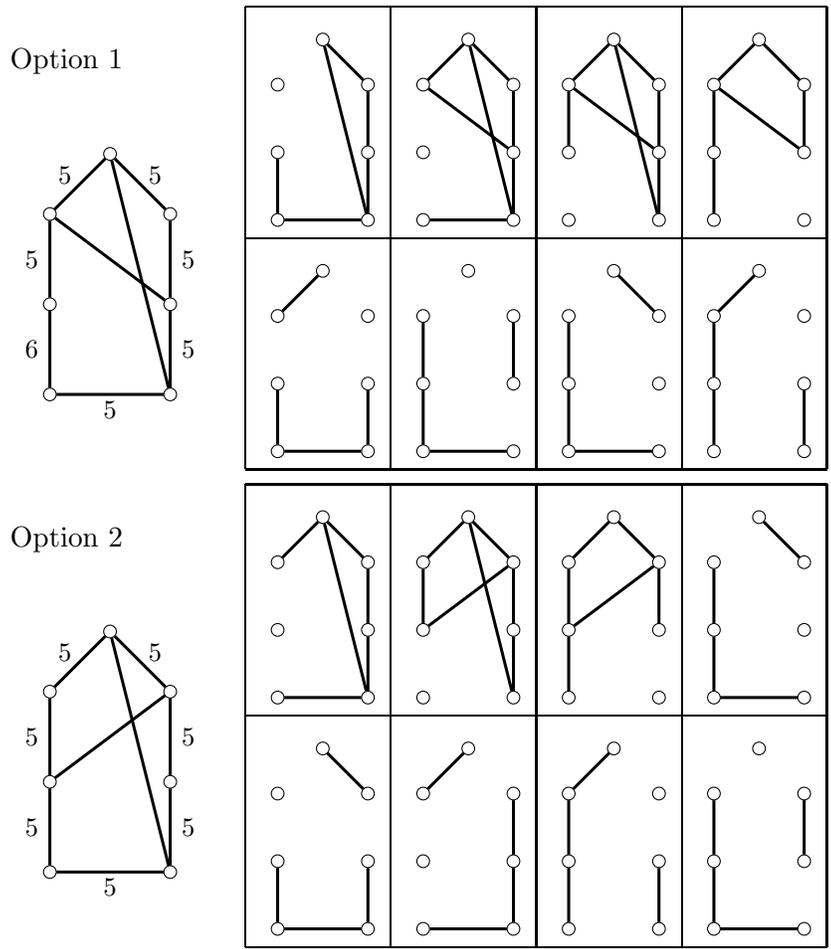
      
      \2
      
     Note that if \YC{$|V(G)|$ is even, then}
     a $7$-cycle cannot have three chords as the matching edge touching the vertex not adjacent to a chord would be a bridge. Thus, the above exhausts all possibilities \YC{when 
 $|V(G)$ is even}.

\YC{If  $|V(G)|$ is odd, then a 7-cycle can have three chords. Let $7$-cycle $C$ have three chords.
Then there is only one non-isomorphic possibility depicted on the left of Fig. \ref{figT}. Let $B_C$ be each bipartite subgraph in the table on the right of Fig. \ref{figT} with probability $1/8$. One can observe that every edge in $C$ will appear in $B_C$ with probability at least $5/8$, and each vertex in $C$ is not in $V(B_C)$ with probability at least $1/8$ and at most $1/4$. \smallQED
     }
     \YC{
     	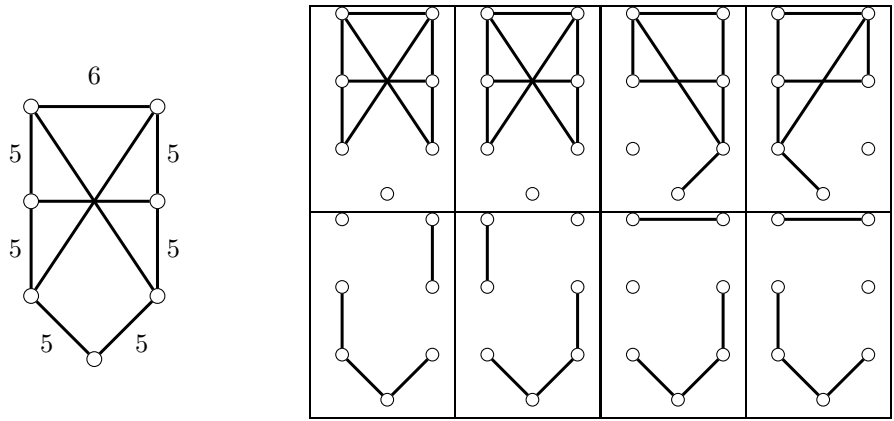
\begin{figure}
     	\begin{minipage}{4cm}
     		\tikzstyle{vertexL}=[circle,draw, minimum size=8pt, scale=0.7, inner sep=0.5pt]
     		\tikzstyle{vertexS}=[circle,draw, minimum size=8pt, scale=0.5, inner sep=0.5pt]
     		\begin{tikzpicture}[scale=0.42]
     			\node (a7) at (1,1) [vertexL]{};
     			\node (a6) at (1,4) [vertexL]{};
     			\node (a5) at (1,7) [vertexL]{};
     			\node (a4) at (5,7) [vertexL]{};
     			\node (a3) at (5,4) [vertexL]{};
     			\node (a2) at (5,1) [vertexL]{};
     			\node (a1) at (3,-1) [vertexL]{};
     			
     			\draw[line width=0.04cm] (a2) to (a3);  
     			\draw[line width=0.04cm] (a3) to (a4);  
     			\draw[line width=0.04cm] (a4) to (a5);  
     			\draw[line width=0.04cm] (a6) to (a7);   
     			\draw[line width=0.04cm] (a7) to (a1);

     			\draw[line width=0.04cm] (a1) to (a2);  
     			\draw[line width=0.04cm] (a5) to (a6);  
     			\draw[line width=0.04cm] (a2) to (a5);  
     			\draw[line width=0.04cm] (a4) to (a7);   
     			\draw[line width=0.04cm] (a3) to (a6);   
     			\node at (4.5,-0.5) {{\small 5}};
     			\node at (1.5,-0.5) {{\small 5}};
     			\node at (5.5,2.5) {{\small 5}};
     			\node at (0.5,2.5) {{\small 5}};
     			\node at (5.5,5.5) {{\small 5}};
     			\node at (0.5,5.5) {{\small 5}};
     			\node at (3,8) {{\small 6}};
     		\end{tikzpicture}
     	\end{minipage} 
     \begin{tabular}{|c|c|c|c|} \hline

     	\tikzstyle{vertexL}=[circle,draw, minimum size=8pt, scale=0.6, inner sep=0.5pt]
     	\begin{tikzpicture}[scale=0.3]
     		\node (a7) at (1,1) [vertexL]{};
     		\node (a6) at (1,4) [vertexL]{};
     		\node (a5) at (1,7) [vertexL]{};
     		\node (a4) at (5,7) [vertexL]{};
     		\node (a3) at (5,4) [vertexL]{};
     		\node (a2) at (5,1) [vertexL]{};
     		\node (a1) at (3,-1) [vertexL]{};

     		\draw[line width=0.04cm] (a2) to (a3);  
     		\draw[line width=0.04cm] (a3) to (a4);  
     		\draw[line width=0.04cm] (a4) to (a5);  
     		\draw[line width=0.04cm] (a6) to (a7);

     		\draw[line width=0.04cm] (a5) to (a6);  
     		\draw[line width=0.04cm] (a2) to (a5);  
     		\draw[line width=0.04cm] (a4) to (a7);   
     		\draw[line width=0.04cm] (a3) to (a6);   
     	\end{tikzpicture} & 
     	\tikzstyle{vertexL}=[circle,draw, minimum size=8pt, scale=0.6, inner sep=0.5pt]
     	\begin{tikzpicture}[scale=0.3]
     		\node (a7) at (1,1) [vertexL]{};
     		\node (a6) at (1,4) [vertexL]{};
     		\node (a5) at (1,7) [vertexL]{};
     		\node (a4) at (5,7) [vertexL]{};
     		\node (a3) at (5,4) [vertexL]{};
     		\node (a2) at (5,1) [vertexL]{};
     		\node (a1) at (3,-1) [vertexL]{};
     		
     		\draw[line width=0.04cm] (a2) to (a3);  
     		\draw[line width=0.04cm] (a3) to (a4);  
     		\draw[line width=0.04cm] (a4) to (a5);  
     		\draw[line width=0.04cm] (a6) to (a7);  
     		
     		\draw[line width=0.04cm] (a5) to (a6);  
     		\draw[line width=0.04cm] (a2) to (a5);  
     		\draw[line width=0.04cm] (a4) to (a7);   
     		\draw[line width=0.04cm] (a3) to (a6);   
     	\end{tikzpicture} &
     	\tikzstyle{vertexL}=[circle,draw, minimum size=8pt, scale=0.6, inner sep=0.5pt]
     	\begin{tikzpicture}[scale=0.3]
     		
     		\node (a7) at (1,1) [vertexL]{};
     		\node (a6) at (1,4) [vertexL]{};
     		\node (a5) at (1,7) [vertexL]{};
     		\node (a4) at (5,7) [vertexL]{};
     		\node (a3) at (5,4) [vertexL]{};
     		\node (a2) at (5,1) [vertexL]{};
     		\node (a1) at (3,-1) [vertexL]{};
     		
     		\draw[line width=0.04cm] (a2) to (a3);  
     		\draw[line width=0.04cm] (a3) to (a4);  
     		\draw[line width=0.04cm] (a4) to (a5);

     		\draw[line width=0.04cm] (a1) to (a2);  
     		\draw[line width=0.04cm] (a5) to (a6);  
     		\draw[line width=0.04cm] (a2) to (a5);  
     		\draw[line width=0.04cm] (a3) to (a6);   
     		
     	\end{tikzpicture} & 
     	\tikzstyle{vertexL}=[circle,draw, minimum size=8pt, scale=0.6, inner sep=0.5pt]
     	\begin{tikzpicture}[scale=0.3]
     		\node (a7) at (1,1) [vertexL]{};
     		\node (a6) at (1,4) [vertexL]{};
     		\node (a5) at (1,7) [vertexL]{};
     		\node (a4) at (5,7) [vertexL]{};
     		\node (a3) at (5,4) [vertexL]{};
     		\node (a2) at (5,1) [vertexL]{};
     		\node (a1) at (3,-1) [vertexL]{};
     		
     		\draw[line width=0.04cm] (a3) to (a4);  
     		\draw[line width=0.04cm] (a4) to (a5);  
     		\draw[line width=0.04cm] (a6) to (a7);   
     		\draw[line width=0.04cm] (a7) to (a1);   
     		
     		\draw[line width=0.04cm] (a5) to (a6);  
     		\draw[line width=0.04cm] (a4) to (a7);   
     		\draw[line width=0.04cm] (a3) to (a6);   
     	\end{tikzpicture} \\ \hline
     	\tikzstyle{vertexL}=[circle,draw, minimum size=8pt, scale=0.6, inner sep=0.5pt]
     	\begin{tikzpicture}[scale=0.3]
     		\node (a7) at (1,1) [vertexL]{};
     		\node (a6) at (1,4) [vertexL]{};
     		\node (a5) at (1,7) [vertexL]{};
     		\node (a4) at (5,7) [vertexL]{};
     		\node (a3) at (5,4) [vertexL]{};
     		\node (a2) at (5,1) [vertexL]{};
     		\node (a1) at (3,-1) [vertexL]{};
     		
     		\draw[line width=0.04cm] (a3) to (a4);  
     		\draw[line width=0.04cm] (a6) to (a7);   
     		\draw[line width=0.04cm] (a7) to (a1);

     		\draw[line width=0.04cm] (a1) to (a2);  
     		
     	\end{tikzpicture} & 
     	\tikzstyle{vertexL}=[circle,draw, minimum size=8pt, scale=0.6, inner sep=0.5pt]
     	\begin{tikzpicture}[scale=0.3]
     		
     		\node (a7) at (1,1) [vertexL]{};
     		\node (a6) at (1,4) [vertexL]{};
     		\node (a5) at (1,7) [vertexL]{};
     		\node (a4) at (5,7) [vertexL]{};
     		\node (a3) at (5,4) [vertexL]{};
     		\node (a2) at (5,1) [vertexL]{};
     		\node (a1) at (3,-1) [vertexL]{};
     		
     		\draw[line width=0.04cm] (a2) to (a3);  
     		\draw[line width=0.04cm] (a7) to (a1);

     		\draw[line width=0.04cm] (a1) to (a2);  
     		\draw[line width=0.04cm] (a5) to (a6);  
     	\end{tikzpicture}  &
     	\tikzstyle{vertexL}=[circle,draw, minimum size=8pt, scale=0.6, inner sep=0.5pt]
     	\begin{tikzpicture}[scale=0.3]
     		\node (a7) at (1,1) [vertexL]{};
     		\node (a6) at (1,4) [vertexL]{};
     		\node (a5) at (1,7) [vertexL]{};
     		\node (a4) at (5,7) [vertexL]{};
     		\node (a3) at (5,4) [vertexL]{};
     		\node (a2) at (5,1) [vertexL]{};
     		\node (a1) at (3,-1) [vertexL]{};
     		
     		\draw[line width=0.04cm] (a2) to (a3);  
     		\draw[line width=0.04cm] (a4) to (a5);  
     		\draw[line width=0.04cm] (a7) to (a1);

     		\draw[line width=0.04cm] (a1) to (a2);  
     		
     	\end{tikzpicture} & 
     	\tikzstyle{vertexL}=[circle,draw, minimum size=8pt, scale=0.6, inner sep=0.5pt]
     	\begin{tikzpicture}[scale=0.3]
     		\node (a7) at (1,1) [vertexL]{};
     		\node (a6) at (1,4) [vertexL]{};
     		\node (a5) at (1,7) [vertexL]{};
     		\node (a4) at (5,7) [vertexL]{};
     		\node (a3) at (5,4) [vertexL]{};
     		\node (a2) at (5,1) [vertexL]{};
     		\node (a1) at (3,-1) [vertexL]{};
     		
     		\draw[line width=0.04cm] (a6) to (a7);  
     		\draw[line width=0.04cm] (a4) to (a5);  
     		\draw[line width=0.04cm] (a7) to (a1);

     		\draw[line width=0.04cm] (a1) to (a2);  
     	\end{tikzpicture} 
     	
     	\\ \hline
     	
     \end{tabular}
     \caption{On the left $C$ with three chords, on the right graphs $B_C$}\label{figT}
     \end{figure}}
       
       \begin{claim}\label{cl:c11}
  	If $|C|= 11$, then there exists a random bipartite subgraph $B_C\in \mathcal{B}_b(H[V(C)])$ such that $\mathbb{E}(w(B_C))\geq \frac{5}{8}w(C)$ and every vertex, which is not incident to a chord of $C$, is not in the $V(B_C)$ with probability at least $1/8$ and at most $1/4$.
       \end{claim}
       {\bf The proof of Claim \ref{cl:c11}. } Let $C=c_0c_1c_2 \ldots c_{10} c_0$. First, assume that some chord of $C$ in $H$ creates a $5$-cycle when added to $C$. Assume without loss of generality that $c_0 c_4$ is a chord of $C$. We may also assume without loss of generality that $c_5c_9$ is not a chord, since if it is we can consider $c_5c_9$ instead of $c_0c_4$, and if $c_{10} c_3$ is now a chord then $c_4 c_8$ is not a chord, as $c_0 c_4$ was a chord (and we could have considered $c_{10} c_3$). So, in this case, neither $c_5c_9$ nor $c_4 c_8$ is a chord (as $c_0 c_4$ is a chord of $C$). We depict $C$ on the left of Fig. \ref{figC11} (dotted lines mean that they are not chords). 
       
Let $B_C$ be each bipartite subgraph in the table on the right of Fig. \ref{figC11} with probability $1/8$. Every edge in $C$ will appear in $B_C$ with probability at least $5/8$ as the numbers next to each edge on the left of Fig. \ref{figC11} indicate how many subgraphs contain this edge. Also, we note that each vertex in $V(C)\setminus \{c_0,c_4\}$ is not in $V(B_C)$ in at least one, and at most two, of the subgraphs in the table and thus with probability at least $1/8$ and at most $1/4$. 
       
       \2
       
       	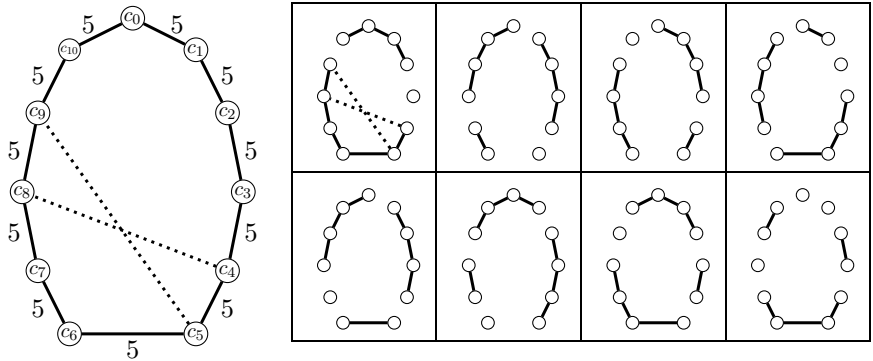
\begin{figure}
       	\begin{minipage}{4cm}
       		\tikzstyle{vertexL}=[circle,draw, minimum size=8pt, scale=0.7, inner sep=0.5pt]
       		\tikzstyle{vertexS}=[circle,draw, minimum size=8pt, scale=0.5, inner sep=0.5pt]
       		\begin{tikzpicture}[scale=0.42]
       			\node at (-0.5,12) {  };
       			\node (a0) at (4,11) [vertexL]{$c_0$};
       			\node (a1) at (6,10) [vertexL]{$c_1$};
       			\node (a2) at (7,8) [vertexL]{$c_2$};
       			\node (a3) at (7.5,5.5) [vertexL]{$c_3$};
       			\node (a4) at (7,3) [vertexL]{$c_4$};
       			\node (a5) at (6,1) [vertexL]{$c_5$};
       			\node (a6) at (2,1) [vertexL]{$c_6$};
       			\node (a7) at (1,3) [vertexL]{$c_7$};
       			\node (a8) at (0.5,5.5) [vertexL]{$c_8$};
       			\node (a9) at (1,8) [vertexL]{$c_9$};
       			\node (a10) at (2,10) [vertexS]{$c_{10}$};
       			\draw[line width=0.04cm] (a0) to (a1);   \node at (5.4,10.8) {{\small 5}};
       			\draw[line width=0.04cm] (a1) to (a2);   \node at (7,9.2) {{\small 5}};
       			\draw[line width=0.04cm] (a2) to (a3);   \node at (7.75,6.8) {{\small 5}};
       			\draw[line width=0.04cm] (a3) to (a4);   \node at (7.75,4.2) {{\small 5}};
       			\draw[line width=0.04cm] (a4) to (a5);   \node at (7,1.8) {{\small 5}};
       			\draw[line width=0.04cm] (a5) to (a6);   \node at (4,0.5) {{\small 5}};
       			\draw[line width=0.04cm] (a6) to (a7);   \node at (1,1.8) {{\small 5}};   
       			\draw[line width=0.04cm] (a7) to (a8);   \node at (0.25,4.2) {{\small 5}};
       			\draw[line width=0.04cm] (a8) to (a9);   \node at (0.25,6.8) {{\small 5}};
       			\draw[line width=0.04cm] (a9) to (a10);  \node at (1,9.2) {{\small 5}}; 
       			\draw[line width=0.04cm] (a10) to (a0);  \node at (2.6,10.8) {{\small 5}};
       			
       			\draw[dotted, line width=0.04cm] (a5) to (a9);
       			\draw[dotted, line width=0.04cm] (a4) to (a8);
       		\end{tikzpicture}
       	\end{minipage} 
       \begin{tabular}{|c|c|c|c|} \hline
       	\tikzstyle{vertexL}=[circle,draw, minimum size=8pt, scale=0.6, inner sep=0.5pt]
       	\begin{tikzpicture}[scale=0.17]
       		\node at (4,12) {  }; 
       		\node (a0) at (4,11) [vertexL]{};
       		\node (a1) at (6,10) [vertexL]{}; 
       		\node (a2) at (7,8) [vertexL]{};
       		\node (a3) at (7.5,5.5) [vertexL]{};
       		\node (a4) at (7,3) [vertexL]{};
       		\node (a5) at (6,1) [vertexL]{};
       		\node (a6) at (2,1) [vertexL]{};
       		\node (a7) at (1,3) [vertexL]{};
       		\node (a8) at (0.5,5.5) [vertexL]{};
       		\node (a9) at (1,8) [vertexL]{};
       		\node (a10) at (2,10) [vertexL]{};
       		\draw[line width=0.04cm] (a0) to (a1);  
       		\draw[line width=0.04cm] (a1) to (a2); 
       		\draw[line width=0.04cm] (a4) to (a5); 
       		\draw[line width=0.04cm] (a5) to (a6); 
       		\draw[line width=0.04cm] (a6) to (a7); 
       		\draw[line width=0.04cm] (a7) to (a8); 
       		\draw[line width=0.04cm] (a8) to (a9); 
       		\draw[line width=0.04cm] (a10) to (a0); 
       		\draw[dotted, line width=0.04cm] (a5) to (a9);
       		\draw[dotted, line width=0.04cm] (a4) to (a8);
       	\end{tikzpicture} & 
       	\tikzstyle{vertexL}=[circle,draw, minimum size=8pt, scale=0.6, inner sep=0.5pt]
       	\begin{tikzpicture}[scale=0.17]
       		\node at (4,12) {  };
       		\node (a0) at (4,11) [vertexL]{};
       		\node (a1) at (6,10) [vertexL]{};
       		\node (a2) at (7,8) [vertexL]{};
       		\node (a3) at (7.5,5.5) [vertexL]{};
       		\node (a4) at (7,3) [vertexL]{};
       		\node (a5) at (6,1) [vertexL]{};
       		\node (a6) at (2,1) [vertexL]{};
       		\node (a7) at (1,3) [vertexL]{};
       		\node (a8) at (0.5,5.5) [vertexL]{};
       		\node (a9) at (1,8) [vertexL]{};
       		\node (a10) at (2,10) [vertexL]{};
       		\draw[line width=0.04cm] (a1) to (a2);
       		\draw[line width=0.04cm] (a2) to (a3);
       		\draw[line width=0.04cm] (a3) to (a4);
       		\draw[line width=0.04cm] (a6) to (a7);
       		\draw[line width=0.04cm] (a8) to (a9);
       		\draw[line width=0.04cm] (a9) to (a10);
       		\draw[line width=0.04cm] (a10) to (a0);
       	\end{tikzpicture} & 
       	\tikzstyle{vertexL}=[circle,draw, minimum size=8pt, scale=0.6, inner sep=0.5pt]
       	\begin{tikzpicture}[scale=0.17]
       		\node at (4,12) {  };
       		\node (a0) at (4,11) [vertexL]{};
       		\node (a1) at (6,10) [vertexL]{};
       		\node (a2) at (7,8) [vertexL]{};
       		\node (a3) at (7.5,5.5) [vertexL]{};
       		\node (a4) at (7,3) [vertexL]{};
       		\node (a5) at (6,1) [vertexL]{};
       		\node (a6) at (2,1) [vertexL]{};
       		\node (a7) at (1,3) [vertexL]{};
       		\node (a8) at (0.5,5.5) [vertexL]{};
       		\node (a9) at (1,8) [vertexL]{};
       		\node (a10) at (2,10) [vertexL]{};
       		\draw[line width=0.04cm] (a0) to (a1); 
       		\draw[line width=0.04cm] (a1) to (a2);
       		\draw[line width=0.04cm] (a2) to (a3);
       		\draw[line width=0.04cm] (a4) to (a5);
       		\draw[line width=0.04cm] (a6) to (a7);
       		\draw[line width=0.04cm] (a7) to (a8);
       		\draw[line width=0.04cm] (a8) to (a9);
       	\end{tikzpicture} & 
       	\tikzstyle{vertexL}=[circle,draw, minimum size=8pt, scale=0.6, inner sep=0.5pt]
       	\begin{tikzpicture}[scale=0.17]
       		\node at (4,12) {  };
       		\node (a0) at (4,11) [vertexL]{};
       		\node (a1) at (6,10) [vertexL]{};
       		\node (a2) at (7,8) [vertexL]{};
       		\node (a3) at (7.5,5.5) [vertexL]{};
       		\node (a4) at (7,3) [vertexL]{};
       		\node (a5) at (6,1) [vertexL]{};
       		\node (a6) at (2,1) [vertexL]{};
       		\node (a7) at (1,3) [vertexL]{};
       		\node (a8) at (0.5,5.5) [vertexL]{};
       		\node (a9) at (1,8) [vertexL]{};
       		\node (a10) at (2,10) [vertexL]{};
       		\draw[line width=0.04cm] (a0) to (a1); 
       		\draw[line width=0.04cm] (a3) to (a4);
       		\draw[line width=0.04cm] (a4) to (a5);
       		\draw[line width=0.04cm] (a5) to (a6);
       		\draw[line width=0.04cm] (a7) to (a8);
       		\draw[line width=0.04cm] (a8) to (a9);
       		\draw[line width=0.04cm] (a9) to (a10);
       	\end{tikzpicture} \\ \hline
       	\tikzstyle{vertexL}=[circle,draw, minimum size=8pt, scale=0.6, inner sep=0.5pt]
       	\begin{tikzpicture}[scale=0.17]
       		\node at (4,12) {  };
       		\node (a0) at (4,11) [vertexL]{};
       		\node (a1) at (6,10) [vertexL]{};
       		\node (a2) at (7,8) [vertexL]{};
       		\node (a3) at (7.5,5.5) [vertexL]{};
       		\node (a4) at (7,3) [vertexL]{};
       		\node (a5) at (6,1) [vertexL]{};
       		\node (a6) at (2,1) [vertexL]{};
       		\node (a7) at (1,3) [vertexL]{};
       		\node (a8) at (0.5,5.5) [vertexL]{};
       		\node (a9) at (1,8) [vertexL]{};
       		\node (a10) at (2,10) [vertexL]{};
       		\draw[line width=0.04cm] (a1) to (a2);
       		\draw[line width=0.04cm] (a2) to (a3);
       		\draw[line width=0.04cm] (a3) to (a4);
       		\draw[line width=0.04cm] (a5) to (a6);
       		\draw[line width=0.04cm] (a8) to (a9);
       		\draw[line width=0.04cm] (a9) to (a10);
       		\draw[line width=0.04cm] (a10) to (a0);
       	\end{tikzpicture} & 
       	\tikzstyle{vertexL}=[circle,draw, minimum size=8pt, scale=0.6, inner sep=0.5pt]
       	\begin{tikzpicture}[scale=0.17]
       		\node at (4,12) {  };
       		\node (a0) at (4,11) [vertexL]{};
       		\node (a1) at (6,10) [vertexL]{};
       		\node (a2) at (7,8) [vertexL]{};
       		\node (a3) at (7.5,5.5) [vertexL]{};
       		\node (a4) at (7,3) [vertexL]{};
       		\node (a5) at (6,1) [vertexL]{};
       		\node (a6) at (2,1) [vertexL]{};
       		\node (a7) at (1,3) [vertexL]{};
       		\node (a8) at (0.5,5.5) [vertexL]{};
       		\node (a9) at (1,8) [vertexL]{};
       		\node (a10) at (2,10) [vertexL]{};
       		\draw[line width=0.04cm] (a0) to (a1);
       		\draw[line width=0.04cm] (a2) to (a3);
       		\draw[line width=0.04cm] (a3) to (a4);
       		\draw[line width=0.04cm] (a4) to (a5);
       		\draw[line width=0.04cm] (a7) to (a8);
       		\draw[line width=0.04cm] (a9) to (a10);
       		\draw[line width=0.04cm] (a10) to (a0);
       	\end{tikzpicture} & 
       	\tikzstyle{vertexL}=[circle,draw, minimum size=8pt, scale=0.6, inner sep=0.5pt]
       	\begin{tikzpicture}[scale=0.17]
       		\node at (4,12) {  };
       		\node (a0) at (4,11) [vertexL]{};
       		\node (a1) at (6,10) [vertexL]{};
       		\node (a2) at (7,8) [vertexL]{};
       		\node (a3) at (7.5,5.5) [vertexL]{};
       		\node (a4) at (7,3) [vertexL]{};
       		\node (a5) at (6,1) [vertexL]{};
       		\node (a6) at (2,1) [vertexL]{};
       		\node (a7) at (1,3) [vertexL]{};
       		\node (a8) at (0.5,5.5) [vertexL]{};
       		\node (a9) at (1,8) [vertexL]{};
       		\node (a10) at (2,10) [vertexL]{};
       		\draw[line width=0.04cm] (a0) to (a1);
       		\draw[line width=0.04cm] (a1) to (a2);
       		\draw[line width=0.04cm] (a3) to (a4);
       		\draw[line width=0.04cm] (a5) to (a6);
       		\draw[line width=0.04cm] (a6) to (a7);
       		\draw[line width=0.04cm] (a7) to (a8);
       		\draw[line width=0.04cm] (a10) to (a0);
       	\end{tikzpicture} & 
       	\tikzstyle{vertexL}=[circle,draw, minimum size=8pt, scale=0.6, inner sep=0.5pt]
       	\begin{tikzpicture}[scale=0.17]
       		\node at (4,12) {  };
       		\node (a0) at (4,11) [vertexL]{};
       		\node (a1) at (6,10) [vertexL]{};
       		\node (a2) at (7,8) [vertexL]{};
       		\node (a3) at (7.5,5.5) [vertexL]{};
       		\node (a4) at (7,3) [vertexL]{};
       		\node (a5) at (6,1) [vertexL]{};
       		\node (a6) at (2,1) [vertexL]{};
       		\node (a7) at (1,3) [vertexL]{};
       		\node (a8) at (0.5,5.5) [vertexL]{};
       		\node (a9) at (1,8) [vertexL]{};
       		\node (a10) at (2,10) [vertexL]{};
       		\draw[line width=0.04cm] (a2) to (a3);
       		\draw[line width=0.04cm] (a4) to (a5);
       		\draw[line width=0.04cm] (a5) to (a6);
       		\draw[line width=0.04cm] (a6) to (a7);
       		\draw[line width=0.04cm] (a9) to (a10);
       	\end{tikzpicture} \\ \hline
       \end{tabular}
       \caption{On the left $C$ with two non-chords, on the right graphs $B_C$}\label{figC11}
       \end{figure}
       
If no chord of $C$ in $H$ creates a $5$-cycle when added to $C$, then every six consecutive vertices will induce a bipartite subgraph in $\mathcal{B}_b(H[V(C)])$. Therefore, we let $B_C$ be each bipartite subgraph in the table on the right of Fig. \ref{figC11'} with probability $1/8$. One can observe that every edge in $C$ will appear in $B_C$ with probability at least $5/8$, and each vertex in $C$ is not in $V(B_C)$ with probability at least $1/8$ and at most $1/4$. 
               \smallQED
       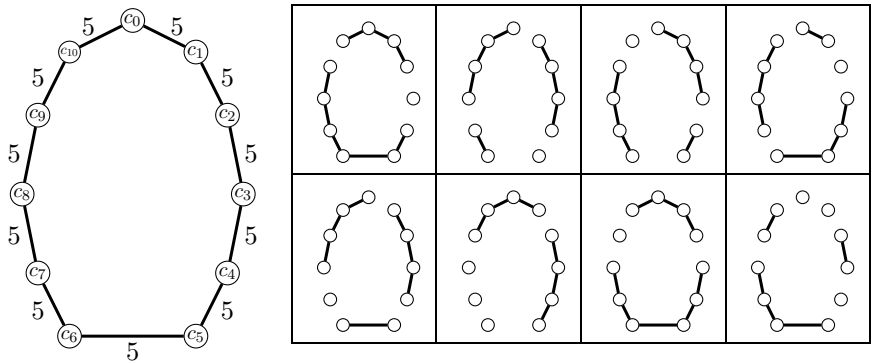
\begin{figure}
       	\begin{minipage}{4cm}
       	\tikzstyle{vertexL}=[circle,draw, minimum size=8pt, scale=0.7, inner sep=0.5pt]
       	\tikzstyle{vertexS}=[circle,draw, minimum size=8pt, scale=0.5, inner sep=0.5pt]
       	\begin{tikzpicture}[scale=0.42]
       		\node at (-0.5,12) {  };
       		\node (a0) at (4,11) [vertexL]{$c_0$};
       		\node (a1) at (6,10) [vertexL]{$c_1$};
       		\node (a2) at (7,8) [vertexL]{$c_2$};
       		\node (a3) at (7.5,5.5) [vertexL]{$c_3$};
       		\node (a4) at (7,3) [vertexL]{$c_4$};
       		\node (a5) at (6,1) [vertexL]{$c_5$};
       		\node (a6) at (2,1) [vertexL]{$c_6$};
       		\node (a7) at (1,3) [vertexL]{$c_7$};
       		\node (a8) at (0.5,5.5) [vertexL]{$c_8$};
       		\node (a9) at (1,8) [vertexL]{$c_9$};
       		\node (a10) at (2,10) [vertexS]{$c_{10}$};
       		\draw[line width=0.04cm] (a0) to (a1);   \node at (5.4,10.8) {{\small 5}};
       		\draw[line width=0.04cm] (a1) to (a2);   \node at (7,9.2) {{\small 5}};
       		\draw[line width=0.04cm] (a2) to (a3);   \node at (7.75,6.8) {{\small 5}};
       		\draw[line width=0.04cm] (a3) to (a4);   \node at (7.75,4.2) {{\small 5}};
       		\draw[line width=0.04cm] (a4) to (a5);   \node at (7,1.8) {{\small 5}};
       		\draw[line width=0.04cm] (a5) to (a6);   \node at (4,0.5) {{\small 5}};
       		\draw[line width=0.04cm] (a6) to (a7);   \node at (1,1.8) {{\small 5}};   
       		\draw[line width=0.04cm] (a7) to (a8);   \node at (0.25,4.2) {{\small 5}};
       		\draw[line width=0.04cm] (a8) to (a9);   \node at (0.25,6.8) {{\small 5}};
       		\draw[line width=0.04cm] (a9) to (a10);  \node at (1,9.2) {{\small 5}}; 
       		\draw[line width=0.04cm] (a10) to (a0);  \node at (2.6,10.8) {{\small 5}};
       		
       	\end{tikzpicture}
       \end{minipage} 
       \begin{tabular}{|c|c|c|c|} \hline
       \tikzstyle{vertexL}=[circle,draw, minimum size=8pt, scale=0.6, inner sep=0.5pt]
       \begin{tikzpicture}[scale=0.17]
       	\node at (4,12) {  }; 
       	\node (a0) at (4,11) [vertexL]{};
       	\node (a1) at (6,10) [vertexL]{}; 
       	\node (a2) at (7,8) [vertexL]{};
       	\node (a3) at (7.5,5.5) [vertexL]{};
       	\node (a4) at (7,3) [vertexL]{};
       	\node (a5) at (6,1) [vertexL]{};
       	\node (a6) at (2,1) [vertexL]{};
       	\node (a7) at (1,3) [vertexL]{};
       	\node (a8) at (0.5,5.5) [vertexL]{};
       	\node (a9) at (1,8) [vertexL]{};
       	\node (a10) at (2,10) [vertexL]{};
       	\draw[line width=0.04cm] (a0) to (a1);  
       	\draw[line width=0.04cm] (a1) to (a2); 
       	\draw[line width=0.04cm] (a4) to (a5); 
       	\draw[line width=0.04cm] (a5) to (a6); 
       	\draw[line width=0.04cm] (a6) to (a7); 
       	\draw[line width=0.04cm] (a7) to (a8); 
       	\draw[line width=0.04cm] (a8) to (a9); 
       	\draw[line width=0.04cm] (a10) to (a0); 
       \end{tikzpicture} & 
       \tikzstyle{vertexL}=[circle,draw, minimum size=8pt, scale=0.6, inner sep=0.5pt]
       \begin{tikzpicture}[scale=0.17]
       	\node at (4,12) {  };
       	\node (a0) at (4,11) [vertexL]{};
       	\node (a1) at (6,10) [vertexL]{};
       	\node (a2) at (7,8) [vertexL]{};
       	\node (a3) at (7.5,5.5) [vertexL]{};
       	\node (a4) at (7,3) [vertexL]{};
       	\node (a5) at (6,1) [vertexL]{};
       	\node (a6) at (2,1) [vertexL]{};
       	\node (a7) at (1,3) [vertexL]{};
       	\node (a8) at (0.5,5.5) [vertexL]{};
       	\node (a9) at (1,8) [vertexL]{};
       	\node (a10) at (2,10) [vertexL]{};
       	\draw[line width=0.04cm] (a1) to (a2);
       	\draw[line width=0.04cm] (a2) to (a3);
       	\draw[line width=0.04cm] (a3) to (a4);
       	\draw[line width=0.04cm] (a6) to (a7);
       	\draw[line width=0.04cm] (a8) to (a9);
       	\draw[line width=0.04cm] (a9) to (a10);
       	\draw[line width=0.04cm] (a10) to (a0);
       \end{tikzpicture} & 
       \tikzstyle{vertexL}=[circle,draw, minimum size=8pt, scale=0.6, inner sep=0.5pt]
       \begin{tikzpicture}[scale=0.17]
       	\node at (4,12) {  };
       	\node (a0) at (4,11) [vertexL]{};
       	\node (a1) at (6,10) [vertexL]{};
       	\node (a2) at (7,8) [vertexL]{};
       	\node (a3) at (7.5,5.5) [vertexL]{};
       	\node (a4) at (7,3) [vertexL]{};
       	\node (a5) at (6,1) [vertexL]{};
       	\node (a6) at (2,1) [vertexL]{};
       	\node (a7) at (1,3) [vertexL]{};
       	\node (a8) at (0.5,5.5) [vertexL]{};
       	\node (a9) at (1,8) [vertexL]{};
       	\node (a10) at (2,10) [vertexL]{};
       	\draw[line width=0.04cm] (a0) to (a1); 
       	\draw[line width=0.04cm] (a1) to (a2);
       	\draw[line width=0.04cm] (a2) to (a3);
       	\draw[line width=0.04cm] (a4) to (a5);
       	\draw[line width=0.04cm] (a6) to (a7);
       	\draw[line width=0.04cm] (a7) to (a8);
       	\draw[line width=0.04cm] (a8) to (a9);
       \end{tikzpicture} & 
       \tikzstyle{vertexL}=[circle,draw, minimum size=8pt, scale=0.6, inner sep=0.5pt]
       \begin{tikzpicture}[scale=0.17]
       	\node at (4,12) {  };
       	\node (a0) at (4,11) [vertexL]{};
       	\node (a1) at (6,10) [vertexL]{};
       	\node (a2) at (7,8) [vertexL]{};
       	\node (a3) at (7.5,5.5) [vertexL]{};
       	\node (a4) at (7,3) [vertexL]{};
       	\node (a5) at (6,1) [vertexL]{};
       	\node (a6) at (2,1) [vertexL]{};
       	\node (a7) at (1,3) [vertexL]{};
       	\node (a8) at (0.5,5.5) [vertexL]{};
       	\node (a9) at (1,8) [vertexL]{};
       	\node (a10) at (2,10) [vertexL]{};
       	\draw[line width=0.04cm] (a0) to (a1); 
       	\draw[line width=0.04cm] (a3) to (a4);
       	\draw[line width=0.04cm] (a4) to (a5);
       	\draw[line width=0.04cm] (a5) to (a6);
       	\draw[line width=0.04cm] (a7) to (a8);
       	\draw[line width=0.04cm] (a8) to (a9);
       	\draw[line width=0.04cm] (a9) to (a10);
       \end{tikzpicture} \\ \hline
       \tikzstyle{vertexL}=[circle,draw, minimum size=8pt, scale=0.6, inner sep=0.5pt]
       \begin{tikzpicture}[scale=0.17]
       	\node at (4,12) {  };
       	\node (a0) at (4,11) [vertexL]{};
       	\node (a1) at (6,10) [vertexL]{};
       	\node (a2) at (7,8) [vertexL]{};
       	\node (a3) at (7.5,5.5) [vertexL]{};
       	\node (a4) at (7,3) [vertexL]{};
       	\node (a5) at (6,1) [vertexL]{};
       	\node (a6) at (2,1) [vertexL]{};
       	\node (a7) at (1,3) [vertexL]{};
       	\node (a8) at (0.5,5.5) [vertexL]{};
       	\node (a9) at (1,8) [vertexL]{};
       	\node (a10) at (2,10) [vertexL]{};
       	\draw[line width=0.04cm] (a1) to (a2);
       	\draw[line width=0.04cm] (a2) to (a3);
       	\draw[line width=0.04cm] (a3) to (a4);
       	\draw[line width=0.04cm] (a5) to (a6);
       	\draw[line width=0.04cm] (a8) to (a9);
       	\draw[line width=0.04cm] (a9) to (a10);
       	\draw[line width=0.04cm] (a10) to (a0);
       \end{tikzpicture} & 
       \tikzstyle{vertexL}=[circle,draw, minimum size=8pt, scale=0.6, inner sep=0.5pt]
       \begin{tikzpicture}[scale=0.17]
       	\node at (4,12) {  };
       	\node (a0) at (4,11) [vertexL]{};
       	\node (a1) at (6,10) [vertexL]{};
       	\node (a2) at (7,8) [vertexL]{};
       	\node (a3) at (7.5,5.5) [vertexL]{};
       	\node (a4) at (7,3) [vertexL]{};
       	\node (a5) at (6,1) [vertexL]{};
       	\node (a6) at (2,1) [vertexL]{};
       	\node (a7) at (1,3) [vertexL]{};
       	\node (a8) at (0.5,5.5) [vertexL]{};
       	\node (a9) at (1,8) [vertexL]{};
       	\node (a10) at (2,10) [vertexL]{};
       	\draw[line width=0.04cm] (a0) to (a1);
       	\draw[line width=0.04cm] (a2) to (a3);
       	\draw[line width=0.04cm] (a3) to (a4);
       	\draw[line width=0.04cm] (a4) to (a5);
       	\draw[line width=0.04cm] (a9) to (a10);
       	\draw[line width=0.04cm] (a10) to (a0);
       \end{tikzpicture} & 
       \tikzstyle{vertexL}=[circle,draw, minimum size=8pt, scale=0.6, inner sep=0.5pt]
       \begin{tikzpicture}[scale=0.17]
       	\node at (4,12) {  };
       	\node (a0) at (4,11) [vertexL]{};
       	\node (a1) at (6,10) [vertexL]{};
       	\node (a2) at (7,8) [vertexL]{};
       	\node (a3) at (7.5,5.5) [vertexL]{};
       	\node (a4) at (7,3) [vertexL]{};
       	\node (a5) at (6,1) [vertexL]{};
       	\node (a6) at (2,1) [vertexL]{};
       	\node (a7) at (1,3) [vertexL]{};
       	\node (a8) at (0.5,5.5) [vertexL]{};
       	\node (a9) at (1,8) [vertexL]{};
       	\node (a10) at (2,10) [vertexL]{};
       	\draw[line width=0.04cm] (a0) to (a1);
       	\draw[line width=0.04cm] (a1) to (a2);
       	\draw[line width=0.04cm] (a3) to (a4);
       	  \draw[line width=0.04cm] (a4) to (a5);
       	\draw[line width=0.04cm] (a5) to (a6);
       	\draw[line width=0.04cm] (a6) to (a7);
       	\draw[line width=0.04cm] (a7) to (a8);
       	\draw[line width=0.04cm] (a10) to (a0);
       \end{tikzpicture} & 
       \tikzstyle{vertexL}=[circle,draw, minimum size=8pt, scale=0.6, inner sep=0.5pt]
       \begin{tikzpicture}[scale=0.17]
       	\node at (4,12) {  };
       	\node (a0) at (4,11) [vertexL]{};
       	\node (a1) at (6,10) [vertexL]{};
       	\node (a2) at (7,8) [vertexL]{};
       	\node (a3) at (7.5,5.5) [vertexL]{};
       	\node (a4) at (7,3) [vertexL]{};
       	\node (a5) at (6,1) [vertexL]{};
       	\node (a6) at (2,1) [vertexL]{};
       	\node (a7) at (1,3) [vertexL]{};
       	\node (a8) at (0.5,5.5) [vertexL]{};
       	\node (a9) at (1,8) [vertexL]{};
       	\node (a10) at (2,10) [vertexL]{};
       	\draw[line width=0.04cm] (a2) to (a3);
       	\draw[line width=0.04cm] (a5) to (a6);
       	\draw[line width=0.04cm] (a6) to (a7);
       	 \draw[line width=0.04cm] (a7) to (a8);
       	\draw[line width=0.04cm] (a9) to (a10);
       \end{tikzpicture} \\ \hline
       \end{tabular}
             	\caption{On the left $C$ where no chord forms a 5-cycle, on the right graphs $B_C$}\label{figC11'}
     \end{figure}

        \begin{claim}\label{cl:pathcase}
       	If $P$ is the path in $H-M$, then there exists a random bipartite subgraph $B_P\in \mathcal{B}_b(H[V(P)])$ such that $\mathbb{E}(w(B_P))\geq \frac{5}{8}w(P)$ and every vertex, which is not incident to a chord of $P$ is not in the $V(B_P)$ with probability at least $1/8$.
       \end{claim}
       {\bf The proof of Claim \ref{cl:pathcase}.} Let $P=p_1p_2\dots p_n$ be the path. Note that by Claim \ref{cl:pathp}, $p_2p_n\in E(H)$ and $n\geq 5$.  We consider the following cases.
       
       {\bf Case 1:} $n=5$. 
       $P=p_1 p_2 p_3 p_4 p_5$ and $p_2 p_5 \in E(H)$. Let $B_P$ be each bipartite subgraphs in the following table with the given probabilities.
       
       \begin{center}
       	\begin{tabular}{|c|c|} \hline
       		$Prob$ & Subgraph \\ \hline \hline
       		$3/8$ & \tikzstyle{vertexL}=[circle,draw, minimum size=8pt, scale=0.6, inner sep=0.5pt]
       		\begin{tikzpicture}[scale=0.2]
       			\node at (1,2) {  }; 
       			\node (p1) at (1,1) [vertexL]{$p_1$}; 
       			\node (p2) at (4,1) [vertexL]{$p_2$}; 
       			\node (p3) at (7,1) [vertexL]{$p_3$};
       			\node (p4) at (10,1) [vertexL]{$p_4$}; 
       			\node (p5) at (13,1) [vertexL]{$p_5$}; 
       			\draw[line width=0.04cm] (p2) -- (p3);
       			\draw[line width=0.04cm] (p3) -- (p4);
       			\draw[line width=0.04cm] (p4) -- (p5);
       		\end{tikzpicture} \\ \hline
       		$2/8$ & \tikzstyle{vertexL}=[circle,draw, minimum size=8pt, scale=0.6, inner sep=0.5pt]
       		\begin{tikzpicture}[scale=0.2]
       			\node at (1,2) {  }; 
       			\node (p1) at (1,1) [vertexL]{$p_1$};
       			\node (p2) at (4,1) [vertexL]{$p_2$};
       			\node (p3) at (7,1) [vertexL]{$p_3$};
       			\node (p4) at (10,1) [vertexL]{$p_4$};
       			\node (p5) at (13,1) [vertexL]{$p_5$};
       			\draw[line width=0.04cm] (p1) -- (p2);
       			\draw[line width=0.04cm] (p2) -- (p3);
       			\draw[line width=0.04cm] (p3) -- (p4);
       		\end{tikzpicture} \\ \hline
       		$2/8$ & \tikzstyle{vertexL}=[circle,draw, minimum size=8pt, scale=0.6, inner sep=0.5pt]
       		\begin{tikzpicture}[scale=0.2]
       			\node at (1,2) {  }; 
       			\node (p1) at (1,1) [vertexL]{$p_1$};
       			\node (p2) at (4,1) [vertexL]{$p_2$};
       			\node (p3) at (7,1) [vertexL]{$p_3$};
       			\node (p4) at (10,1) [vertexL]{$p_4$};
       			\node (p5) at (13,1) [vertexL]{$p_5$};
       			\draw[line width=0.04cm] (p1) -- (p2);
       			\draw[line width=0.04cm] (p4) -- (p5);
       		\end{tikzpicture} \\ \hline
       		$1/8$ & \tikzstyle{vertexL}=[circle,draw, minimum size=8pt, scale=0.6, inner sep=0.5pt]
       		\begin{tikzpicture}[scale=0.2]
       			\node at (1,2) {  }; 
       			\node (p1) at (1,1) [vertexL]{$p_1$};
       			\node (p2) at (4,1) [vertexL]{$p_2$};
       			\node (p3) at (7,1) [vertexL]{$p_3$};
       			\node (p4) at (10,1) [vertexL]{$p_4$};
       			\node (p5) at (13,1) [vertexL]{$p_5$};
       			\draw[line width=0.04cm] (p1) -- (p2);
       		\end{tikzpicture} \\ \hline
       	\end{tabular}
       \end{center}
       
       Note that every edge of $P$ is in $B_C$ with probability at least $5/8$ and every vertex except $p_2$ is not in $V(B_C)$ vertex with probability at least $1/8$ (and $p_2$ is incident with the edge $p_2p_5$).
       
In the following cases, we consider when $p_1 p_{n-1}$ is an edge in $E(H)$ and when it is not, separately. Note that if $p_1 p_{n-1} \not\in E(H)$, then the subgraph $H[\{p_{n-2},p_{n-1},p_n,p_1\}]$ is a bipartite subgraph in $\mathcal{B}_b(H)$ with partite sets $\{p_{n-2},p_n\}$ and $\{p_1,p_{n-1}\}$. 
        
       {\bf Case 2:} $n=4k$, where $k \geq 2$.

       First consider the case when $p_1 p_{n-1} \not\in E(H)$. Let $B_P$ be the bipartite subgraphs induced by the following sets (with the given probabilities).
       
       \begin{itemize}
       	\item Use $\{p_1,p_2,p_3,p_4\}, \{p_5,p_6,p_7,p_8\}, \ldots , \{p_{n-3},p_{n-2},p_{n-1},p_n\}$ with probability $2/8$.
       	\item Use $ \{p_1,p_2\}, \{p_3,p_4,p_5,p_6\}, \{p_7,p_8,p_9,p_{10}\}, \ldots ,$ $\{p_{n-5},p_{n-4},p_{n-3},p_{n-2}\}, \{p_{n-1},p_{n}\}$ with probability $2/8$.
       	\item Use $ \{p_2,p_3,p_4,p_5\}, \{p_6,p_7,p_8,p_{9}\}, \ldots , \{p_{n-6},p_{n-5},p_{n-4},p_{n-3}\}, \{p_{n-2},p_{n-1}\}$ with probability $1/8$.
       	\item Use $ \{p_2,p_3\}, \{p_4,p_5,p_6,p_7\}, \{p_8,p_9,p_{10},p_{11}\}, \ldots , \{p_{n-4},p_{n-3},p_{n-2},p_{n-1}\}$ with probability $1/8$.
       	\item Use $ \{p_1,p_2\}$ with probability $1/8$.
       	\item Use $ \{p_2,p_3\}, \{p_4,p_5\}, \ldots , \{p_{n-4},p_{n-3}\}, \{p_{n-2},p_{n-1},p_n,p_1\}$ with probability $1/8$.
       \end{itemize}
       
Now, every edge of $P$ is in $B_C$ with probability at least $5/8$, and every vertex except $p_2$ is not in $V(B_C)$ with probability at least $1/8$ (and $p_2$ is incident with the edge $p_2 p_n$). We illustrate the above probability space for $n=8$ below (in the last figure the A's indicate one partite set and the B's the other partite set of the subgraph induced by $\{p_1,p_6,p_7,p_8\}$). 

       \begin{center}
       	\begin{tabular}{|c|c|} \hline
       		$Prob$ & Subgraph \\ \hline \hline
       		$2/8$ & \tikzstyle{vertexL}=[circle,draw, minimum size=8pt, scale=0.6, inner sep=0.5pt]
       		\begin{tikzpicture}[scale=0.2]
       			\node at (1,2) {  }; 
       			\node (p1) at (1,1) [vertexL]{$p_1$};
       			\node (p2) at (4,1) [vertexL]{$p_2$};
       			\node (p3) at (7,1) [vertexL]{$p_3$};
       			\node (p4) at (10,1) [vertexL]{$p_4$};
       			\node (p5) at (13,1) [vertexL]{$p_5$};
       			\node (p6) at (16,1) [vertexL]{$p_6$};
       			\node (p7) at (19,1) [vertexL]{$p_7$};
       			\node (p8) at (22,1) [vertexL]{$p_8$};
       			\draw[line width=0.04cm] (p1) -- (p2);
       			\draw[line width=0.04cm] (p2) -- (p3);
       			\draw[line width=0.04cm] (p3) -- (p4);
       			\draw[line width=0.04cm] (p5) -- (p6);
       			\draw[line width=0.04cm] (p6) -- (p7);
       			\draw[line width=0.04cm] (p7) -- (p8);
       		\end{tikzpicture} \\ \hline
       		$2/8$ & \tikzstyle{vertexL}=[circle,draw, minimum size=8pt, scale=0.6, inner sep=0.5pt]
       		\begin{tikzpicture}[scale=0.2]
       			\node at (1,2) {  };
       			\node (p1) at (1,1) [vertexL]{$p_1$};
       			\node (p2) at (4,1) [vertexL]{$p_2$};
       			\node (p3) at (7,1) [vertexL]{$p_3$};
       			\node (p4) at (10,1) [vertexL]{$p_4$};
       			\node (p5) at (13,1) [vertexL]{$p_5$};
       			\node (p6) at (16,1) [vertexL]{$p_6$};
       			\node (p7) at (19,1) [vertexL]{$p_7$};
       			\node (p8) at (22,1) [vertexL]{$p_8$};
       			\draw[line width=0.04cm] (p1) -- (p2);
       			\draw[line width=0.04cm] (p3) -- (p4);
       			\draw[line width=0.04cm] (p4) -- (p5);
       			\draw[line width=0.04cm] (p5) -- (p6);
       			\draw[line width=0.04cm] (p7) -- (p8);
       		\end{tikzpicture} \\ \hline
       		$1/8$ & \tikzstyle{vertexL}=[circle,draw, minimum size=8pt, scale=0.6, inner sep=0.5pt]
       		\begin{tikzpicture}[scale=0.2]
       			\node at (1,2) {  };
       			\node (p1) at (1,1) [vertexL]{$p_1$};
       			\node (p2) at (4,1) [vertexL]{$p_2$};
       			\node (p3) at (7,1) [vertexL]{$p_3$};
       			\node (p4) at (10,1) [vertexL]{$p_4$};
       			\node (p5) at (13,1) [vertexL]{$p_5$};
       			\node (p6) at (16,1) [vertexL]{$p_6$};
       			\node (p7) at (19,1) [vertexL]{$p_7$};
       			\node (p8) at (22,1) [vertexL]{$p_8$};
       			\draw[line width=0.04cm] (p2) -- (p3);
       			\draw[line width=0.04cm] (p3) -- (p4);
       			\draw[line width=0.04cm] (p4) -- (p5);
       			\draw[line width=0.04cm] (p6) -- (p7);
       		\end{tikzpicture} \\ \hline
       		$1/8$ & \tikzstyle{vertexL}=[circle,draw, minimum size=8pt, scale=0.6, inner sep=0.5pt]
       		\begin{tikzpicture}[scale=0.2]
       			\node at (1,2) {  };
       			\node (p1) at (1,1) [vertexL]{$p_1$};
       			\node (p2) at (4,1) [vertexL]{$p_2$};
       			\node (p3) at (7,1) [vertexL]{$p_3$};
       			\node (p4) at (10,1) [vertexL]{$p_4$};
       			\node (p5) at (13,1) [vertexL]{$p_5$};
       			\node (p6) at (16,1) [vertexL]{$p_6$};
       			\node (p7) at (19,1) [vertexL]{$p_7$};
       			\node (p8) at (22,1) [vertexL]{$p_8$};
       			\draw[line width=0.04cm] (p2) -- (p3);
       			\draw[line width=0.04cm] (p4) -- (p5);
       			\draw[line width=0.04cm] (p5) -- (p6);
       			\draw[line width=0.04cm] (p6) -- (p7);
       		\end{tikzpicture} \\ \hline
       		$1/8$ & \tikzstyle{vertexL}=[circle,draw, minimum size=8pt, scale=0.6, inner sep=0.5pt]
       		\begin{tikzpicture}[scale=0.2]
       			\node at (1,2) {  };
       			\node (p1) at (1,1) [vertexL]{$p_1$};
       			\node (p2) at (4,1) [vertexL]{$p_2$};
       			\node (p3) at (7,1) [vertexL]{$p_3$};
       			\node (p4) at (10,1) [vertexL]{$p_4$};
       			\node (p5) at (13,1) [vertexL]{$p_5$};
       			\node (p6) at (16,1) [vertexL]{$p_6$};
       			\node (p7) at (19,1) [vertexL]{$p_7$};
       			\node (p8) at (22,1) [vertexL]{$p_8$};
       			\draw[line width=0.04cm] (p1) -- (p2);
       		\end{tikzpicture} \\ \hline
       		$1/8$ & \tikzstyle{vertexL}=[circle,draw, minimum size=8pt, scale=0.6, inner sep=0.5pt]
       		\begin{tikzpicture}[scale=0.2]
       			\node at (1,2) {  };
       			\node (p1) at (1,1) [vertexL]{$p_1$};  \node at (1,-0.4) {{\tiny A}};
       			\node (p2) at (4,1) [vertexL]{$p_2$};
       			\node (p3) at (7,1) [vertexL]{$p_3$};
       			\node (p4) at (10,1) [vertexL]{$p_4$};
       			\node (p5) at (13,1) [vertexL]{$p_5$};
       			\node (p6) at (16,1) [vertexL]{$p_6$}; \node at (16,-0.4) {{\tiny B}};
       			\node (p7) at (19,1) [vertexL]{$p_7$}; \node at (19,-0.4) {{\tiny A}};
       			\node (p8) at (22,1) [vertexL]{$p_8$}; \node at (22,-0.4) {{\tiny B}};
       			\draw[line width=0.04cm] (p2) -- (p3);
       			\draw[line width=0.04cm] (p4) -- (p5);
       			\draw[line width=0.04cm] (p6) -- (p7);
       			\draw[line width=0.04cm] (p7) -- (p8);
       		\end{tikzpicture} \\ \hline
       	\end{tabular}
       \end{center}
       
       We now consider the case when $p_1 p_{n-1} \in E(H)$.  In this case, let $B_P$ be bipartite subgraphs induced by the following sets (with the given probabilities).
       
       \begin{itemize}
       	\item Use $\{p_1,p_2,p_3,p_4\}, \{p_5,p_6,p_7,p_8\}, \ldots , \{p_{n-3},p_{n-2},p_{n-1},p_n\}$ with probability $2/8$.
       	\item Use $ \{p_1,p_2\}, \{p_3,p_4,p_5,p_6\}, \{p_7,p_8,p_9,p_{10}\}, \ldots ,$ $\{p_{n-5},p_{n-4},p_{n-3},p_{n-2}\}, \{p_{n-1},p_{n}\}$ with probability $2/8$.
       	\item Use $ \{p_2,p_3,p_4,p_5\}, \{p_6,p_7,p_8,p_{9}\}, \ldots , \{p_{n-6},p_{n-5},p_{n-4},p_{n-3}\}, \{p_{n-2},p_{n-1}\}$ with probability $1/8$.
       	\item Use $ \{p_2,p_3\}, \{p_4,p_5,p_6,p_7\}, \{p_8,p_9,p_{10},p_{11}\}, \ldots , \{p_{n-4},p_{n-3},p_{n-2},p_{n-1}\}$ with probability $1/8$.
       	\item Use $ \{p_1,p_2\}, \{p_{n-1},p_{n}\}$ with probability $1/8$.
       	\item Use $ \{p_2,p_3\}, \{p_4,p_5\}, \ldots , \{p_{n-2},p_{n-1}\}$ with probability $1/8$.
       \end{itemize}
       
       Now, any edge of $P$ is in $B_C$ with probability at least $5/8$ and every vertex except $p_2$ and $p_{n-1}$ is not in $V(B_C)$ with probability at least $1/8$ (and $p_2$ is incident with the edge $p_2 p_n$ and $p_{n-1}$ is incident with the edge $p_1 p_{n-1}$). We illustrate the above probability space for $n=8$ below.
       
       \begin{center}
       	\begin{tabular}{|c|c|} \hline
       		$Prob$ & Subgraph \\ \hline \hline 
       		$2/8$ & \tikzstyle{vertexL}=[circle,draw, minimum size=8pt, scale=0.6, inner sep=0.5pt]
       		\begin{tikzpicture}[scale=0.2]
       			\node at (1,2) {  };
       			\node (p1) at (1,1) [vertexL]{$p_1$};  
       			\node (p2) at (4,1) [vertexL]{$p_2$};
       			\node (p3) at (7,1) [vertexL]{$p_3$};
       			\node (p4) at (10,1) [vertexL]{$p_4$};
       			\node (p5) at (13,1) [vertexL]{$p_5$};
       			\node (p6) at (16,1) [vertexL]{$p_6$}; 
       			\node (p7) at (19,1) [vertexL]{$p_7$}; 
       			\node (p8) at (22,1) [vertexL]{$p_8$}; 
       			\draw[line width=0.04cm] (p1) -- (p2);
       			\draw[line width=0.04cm] (p2) -- (p3);
       			\draw[line width=0.04cm] (p3) -- (p4);
       			\draw[line width=0.04cm] (p5) -- (p6);
       			\draw[line width=0.04cm] (p6) -- (p7);
       			\draw[line width=0.04cm] (p7) -- (p8);
       		\end{tikzpicture} \\ \hline
       		$2/8$ & \tikzstyle{vertexL}=[circle,draw, minimum size=8pt, scale=0.6, inner sep=0.5pt]
       		\begin{tikzpicture}[scale=0.2]
       			\node at (1,2) {  }; 
       			\node (p1) at (1,1) [vertexL]{$p_1$};  
       			\node (p2) at (4,1) [vertexL]{$p_2$};
       			\node (p3) at (7,1) [vertexL]{$p_3$};
       			\node (p4) at (10,1) [vertexL]{$p_4$};
       			\node (p5) at (13,1) [vertexL]{$p_5$};
       			\node (p6) at (16,1) [vertexL]{$p_6$}; 
       			\node (p7) at (19,1) [vertexL]{$p_7$}; 
       			\node (p8) at (22,1) [vertexL]{$p_8$}; 
       			\draw[line width=0.04cm] (p1) -- (p2);
       			\draw[line width=0.04cm] (p3) -- (p4);
       			\draw[line width=0.04cm] (p4) -- (p5);
       			\draw[line width=0.04cm] (p5) -- (p6);
       			\draw[line width=0.04cm] (p7) -- (p8);
       		\end{tikzpicture} \\ \hline
       		$1/8$ & \tikzstyle{vertexL}=[circle,draw, minimum size=8pt, scale=0.6, inner sep=0.5pt]
       		\begin{tikzpicture}[scale=0.2]
       			\node at (1,2) {  }; 
       			\node (p1) at (1,1) [vertexL]{$p_1$};  
       			\node (p2) at (4,1) [vertexL]{$p_2$};
       			\node (p3) at (7,1) [vertexL]{$p_3$};
       			\node (p4) at (10,1) [vertexL]{$p_4$};
       			\node (p5) at (13,1) [vertexL]{$p_5$};
       			\node (p6) at (16,1) [vertexL]{$p_6$}; 
       			\node (p7) at (19,1) [vertexL]{$p_7$}; 
       			\node (p8) at (22,1) [vertexL]{$p_8$}; 
       			\draw[line width=0.04cm] (p2) -- (p3);
       			\draw[line width=0.04cm] (p3) -- (p4);
       			\draw[line width=0.04cm] (p4) -- (p5);
       			\draw[line width=0.04cm] (p6) -- (p7);
       		\end{tikzpicture} \\ \hline
       		$1/8$ & \tikzstyle{vertexL}=[circle,draw, minimum size=8pt, scale=0.6, inner sep=0.5pt]
       		\begin{tikzpicture}[scale=0.2]
       			\node at (1,2) {  }; 
       			\node (p1) at (1,1) [vertexL]{$p_1$};  
       			\node (p2) at (4,1) [vertexL]{$p_2$};
       			\node (p3) at (7,1) [vertexL]{$p_3$};
       			\node (p4) at (10,1) [vertexL]{$p_4$};
       			\node (p5) at (13,1) [vertexL]{$p_5$};
       			\node (p6) at (16,1) [vertexL]{$p_6$}; 
       			\node (p7) at (19,1) [vertexL]{$p_7$}; 
       			\node (p8) at (22,1) [vertexL]{$p_8$}; 
       			\draw[line width=0.04cm] (p2) -- (p3);
       			\draw[line width=0.04cm] (p4) -- (p5);
       			\draw[line width=0.04cm] (p5) -- (p6);
       			\draw[line width=0.04cm] (p6) -- (p7);
       		\end{tikzpicture} \\ \hline
       		$1/8$ & \tikzstyle{vertexL}=[circle,draw, minimum size=8pt, scale=0.6, inner sep=0.5pt]
       		\begin{tikzpicture}[scale=0.2]
       			\node at (1,2) {  }; 
       			\node (p1) at (1,1) [vertexL]{$p_1$};  
       			\node (p2) at (4,1) [vertexL]{$p_2$};
       			\node (p3) at (7,1) [vertexL]{$p_3$};
       			\node (p4) at (10,1) [vertexL]{$p_4$};
       			\node (p5) at (13,1) [vertexL]{$p_5$};
       			\node (p6) at (16,1) [vertexL]{$p_6$}; 
       			\node (p7) at (19,1) [vertexL]{$p_7$}; 
       			\node (p8) at (22,1) [vertexL]{$p_8$}; 
       			\draw[line width=0.04cm] (p1) -- (p2);
       			\draw[line width=0.04cm] (p7) -- (p8);
       		\end{tikzpicture} \\ \hline
       		$1/8$ & \tikzstyle{vertexL}=[circle,draw, minimum size=8pt, scale=0.6, inner sep=0.5pt]
       		\begin{tikzpicture}[scale=0.2]
       			\node at (1,2) {  }; 
       			\node (p1) at (1,1) [vertexL]{$p_1$};  
       			\node (p2) at (4,1) [vertexL]{$p_2$};
       			\node (p3) at (7,1) [vertexL]{$p_3$};
       			\node (p4) at (10,1) [vertexL]{$p_4$};
       			\node (p5) at (13,1) [vertexL]{$p_5$};
       			\node (p6) at (16,1) [vertexL]{$p_6$}; 
       			\node (p7) at (19,1) [vertexL]{$p_7$}; 
       			\node (p8) at (22,1) [vertexL]{$p_8$}; 
       			\draw[line width=0.04cm] (p2) -- (p3);
       			\draw[line width=0.04cm] (p4) -- (p5);
       			\draw[line width=0.04cm] (p6) -- (p7);
       		\end{tikzpicture} \\ \hline
       	\end{tabular}
       \end{center}

       {\bf Case 3:} $n=4k+1$, where $k \geq 2$.
       
       First consider the case when $p_1 p_{n-1} \not\in E(H)$.  In this case, let $B_P$ be the bipartite subgraphs induced by the following sets (with the given probabilities).
       
       \begin{itemize} 
       	\item Use $\{p_1,p_2,p_3,p_4\}, \{p_5,p_6,p_7,p_8\}, \ldots , \{p_{n-4},p_{n-3},p_{n-2},p_{n-1}\}$ with probability $2/8$.
       	\item Use $\{p_2,p_3,p_4,p_5\}, \{p_6,p_7,p_8,p_{9}\}, \ldots , \{p_{n-3},p_{n-2},p_{n-1},p_{n}\}$ with probability $2/8$.
       	\item Use $ \{p_1,p_2\}, \{p_3,p_4,p_5,p_6\}, \{p_7,p_8,p_9,p_{10}\}, \ldots , \{p_{n-6},p_{n-5},p_{n-4},p_{n-3}\}, \{p_{n-1},p_{n}\}$ with probability $1/8$.
       	\item Use $ \{p_1,p_2\}, \{p_4,p_5,p_6,p_7\}, \{p_8,p_9,p_{10},p_{11}\}, \ldots , \{p_{n-5},p_{n-4},p_{n-3},p_{n-2}\}, \{p_{n-1},p_{n}\}$ with probability $1/8$.
       	\item Use $ \{p_1,p_2\}, \{p_4,p_5,p_6,p_7\}, \{p_8,p_9,p_{10},p_{11}\}, \ldots , \{p_{n-5},p_{n-4},p_{n-3},p_{n-2}\}$ with probability $1/8$.
       	\item Use $ \{p_2,p_3\}, \{p_{n-2},p_{n-1},p_n,p_1\}$ with probability $1/8$.
       \end{itemize}

Now, every edge of $P$ is in $B_P$ with probability at least $5/8$, and every vertex except $p_2$ is not in $V(B_P)$ with probability at least $1/8$. We illustrate the above probability space for $n=9$ below (in the last figure the A's indicate one partite set and the B's the other partite set of the subgraph induced by $\{p_1,p_7,p_8,p_9\}$).
       
       \begin{center}
       	\begin{tabular}{|c|c|} \hline
       		$Prob$ & Subgraph \\ \hline \hline
       		$2/8$ & \tikzstyle{vertexL}=[circle,draw, minimum size=8pt, scale=0.6, inner sep=0.5pt]
       		\begin{tikzpicture}[scale=0.2]
       			\node at (1,2) {  }; 
       			\node (p1) at (1,1) [vertexL]{$p_1$};  
       			\node (p2) at (4,1) [vertexL]{$p_2$};
       			\node (p3) at (7,1) [vertexL]{$p_3$};
       			\node (p4) at (10,1) [vertexL]{$p_4$};
       			\node (p5) at (13,1) [vertexL]{$p_5$};
       			\node (p6) at (16,1) [vertexL]{$p_6$};
       			\node (p7) at (19,1) [vertexL]{$p_7$}; 
       			\node (p8) at (22,1) [vertexL]{$p_8$}; 
       			\node (p9) at (25,1) [vertexL]{$p_9$};
       			\draw[line width=0.04cm] (p1) -- (p2); 
       			\draw[line width=0.04cm] (p2) -- (p3);
       			\draw[line width=0.04cm] (p3) -- (p4);
       			\draw[line width=0.04cm] (p5) -- (p6);
       			\draw[line width=0.04cm] (p6) -- (p7);
       			\draw[line width=0.04cm] (p7) -- (p8);
       		\end{tikzpicture} \\ \hline
       		$2/8$ & \tikzstyle{vertexL}=[circle,draw, minimum size=8pt, scale=0.6, inner sep=0.5pt]
       		\begin{tikzpicture}[scale=0.2]
       			\node at (1,2) {  };
       			\node (p1) at (1,1) [vertexL]{$p_1$};
       			\node (p2) at (4,1) [vertexL]{$p_2$};
       			\node (p3) at (7,1) [vertexL]{$p_3$};
       			\node (p4) at (10,1) [vertexL]{$p_4$};
       			\node (p5) at (13,1) [vertexL]{$p_5$};
       			\node (p6) at (16,1) [vertexL]{$p_6$};
       			\node (p7) at (19,1) [vertexL]{$p_7$};
       			\node (p8) at (22,1) [vertexL]{$p_8$};
       			\node (p9) at (25,1) [vertexL]{$p_9$};
       			\draw[line width=0.04cm] (p2) -- (p3);
       			\draw[line width=0.04cm] (p3) -- (p4);
       			\draw[line width=0.04cm] (p4) -- (p5);
       			\draw[line width=0.04cm] (p6) -- (p7);
       			\draw[line width=0.04cm] (p7) -- (p8);
       			\draw[line width=0.04cm] (p8) -- (p9);
       		\end{tikzpicture} \\ \hline
       		$1/8$ & \tikzstyle{vertexL}=[circle,draw, minimum size=8pt, scale=0.6, inner sep=0.5pt]
       		\begin{tikzpicture}[scale=0.2]
       			\node at (1,2) {  };
       			\node (p1) at (1,1) [vertexL]{$p_1$};
       			\node (p2) at (4,1) [vertexL]{$p_2$};
       			\node (p3) at (7,1) [vertexL]{$p_3$};
       			\node (p4) at (10,1) [vertexL]{$p_4$};
       			\node (p5) at (13,1) [vertexL]{$p_5$};
       			\node (p6) at (16,1) [vertexL]{$p_6$};
       			\node (p7) at (19,1) [vertexL]{$p_7$};
       			\node (p8) at (22,1) [vertexL]{$p_8$};
       			\node (p9) at (25,1) [vertexL]{$p_9$};
       			\draw[line width=0.04cm] (p1) -- (p2); 
       			\draw[line width=0.04cm] (p3) -- (p4);
       			\draw[line width=0.04cm] (p4) -- (p5);
       			\draw[line width=0.04cm] (p5) -- (p6);
       			\draw[line width=0.04cm] (p8) -- (p9);
       		\end{tikzpicture} \\ \hline
       		$1/8$ & \tikzstyle{vertexL}=[circle,draw, minimum size=8pt, scale=0.6, inner sep=0.5pt]
       		\begin{tikzpicture}[scale=0.2]
       			\node at (1,2) {  };
       			\node (p1) at (1,1) [vertexL]{$p_1$};
       			\node (p2) at (4,1) [vertexL]{$p_2$};
       			\node (p3) at (7,1) [vertexL]{$p_3$};
       			\node (p4) at (10,1) [vertexL]{$p_4$};
       			\node (p5) at (13,1) [vertexL]{$p_5$};
       			\node (p6) at (16,1) [vertexL]{$p_6$};
       			\node (p7) at (19,1) [vertexL]{$p_7$};
       			\node (p8) at (22,1) [vertexL]{$p_8$};
       			\node (p9) at (25,1) [vertexL]{$p_9$};
       			\draw[line width=0.04cm] (p1) -- (p2); 
       			\draw[line width=0.04cm] (p4) -- (p5);
       			\draw[line width=0.04cm] (p5) -- (p6);
       			\draw[line width=0.04cm] (p6) -- (p7);
       			\draw[line width=0.04cm] (p8) -- (p9);
       		\end{tikzpicture} \\ \hline
       		$1/8$ & \tikzstyle{vertexL}=[circle,draw, minimum size=8pt, scale=0.6, inner sep=0.5pt]
       		\begin{tikzpicture}[scale=0.2]
       			\node at (1,2) {  };
       			\node (p1) at (1,1) [vertexL]{$p_1$};
       			\node (p2) at (4,1) [vertexL]{$p_2$};
       			\node (p3) at (7,1) [vertexL]{$p_3$};
       			\node (p4) at (10,1) [vertexL]{$p_4$};
       			\node (p5) at (13,1) [vertexL]{$p_5$};
       			\node (p6) at (16,1) [vertexL]{$p_6$};
       			\node (p7) at (19,1) [vertexL]{$p_7$};
       			\node (p8) at (22,1) [vertexL]{$p_8$};
       			\node (p9) at (25,1) [vertexL]{$p_9$};
       			\draw[line width=0.04cm] (p1) -- (p2); 
       			\draw[line width=0.04cm] (p4) -- (p5);
       			\draw[line width=0.04cm] (p5) -- (p6);
       			\draw[line width=0.04cm] (p6) -- (p7);
       		\end{tikzpicture} \\ \hline
       		$1/8$ & \tikzstyle{vertexL}=[circle,draw, minimum size=8pt, scale=0.6, inner sep=0.5pt]
       		\begin{tikzpicture}[scale=0.2]
       			\node at (1,2) {  };
       			\node (p1) at (1,1) [vertexL]{$p_1$};  \node at (1,-0.4) {{\tiny A}};
       			\node (p2) at (4,1) [vertexL]{$p_2$};
       			\node (p3) at (7,1) [vertexL]{$p_3$};
       			\node (p4) at (10,1) [vertexL]{$p_4$};
       			\node (p5) at (13,1) [vertexL]{$p_5$};
       			\node (p6) at (16,1) [vertexL]{$p_6$};
       			\node (p7) at (19,1) [vertexL]{$p_7$}; \node at (19,-0.4) {{\tiny B}};
       			\node (p8) at (22,1) [vertexL]{$p_8$}; \node at (22,-0.4) {{\tiny A}};
       			\node (p9) at (25,1) [vertexL]{$p_9$}; \node at (25,-0.4) {{\tiny B}};
       			\draw[line width=0.04cm] (p2) -- (p3);
       			\draw[line width=0.04cm] (p7) -- (p8);
       			\draw[line width=0.04cm] (p8) -- (p9);
       		\end{tikzpicture} \\ \hline
       	\end{tabular}
       \end{center}

       We now consider the case when $p_1 p_{n-1} \in E(H)$.  In this case let $B_P$ be the bipartite subgraphs induced by the following sets (with the given probabilities).

       \begin{itemize} 
       	\item Use $\{p_1,p_2,p_3,p_4\}, \{p_5,p_6,p_7,p_8\}, \ldots , \{p_{n-4},p_{n-3},p_{n-2},p_{n-1}\}$ with probability $2/8$.
       	\item Use $\{p_2,p_3,p_4,p_5\}, \{p_6,p_7,p_8,p_{9}\}, \ldots , \{p_{n-3},p_{n-2},p_{n-1},p_{n}\}$ with probability $2/8$.
       	\item Use $ \{p_1,p_2\}, \{p_3,p_4,p_5,p_6\}, \{p_7,p_8,p_9,p_{10}\}, \ldots , \{p_{n-6},p_{n-5},p_{n-4},p_{n-3}\}, \{p_{n-1},p_{n}\}$ with probability $2/8$.
       	\item Use $ \{p_1,p_2\}, \{p_4,p_5,p_6,p_7\}, \{p_8,p_9,p_{10},p_{11}\}, \ldots , \{p_{n-5},p_{n-4},p_{n-3},p_{n-2}\}, \{p_{n-1},p_{n}\}$ with probability $1/8$.
       	\item Use $ \{p_2,p_3\}, \{p_{n-2},p_{n-1}\}$ with probability $1/8$.
       \end{itemize}

       Now, every edge of $P$ is in $B_P$ with probability at least $5/8$, and every vertex except $p_2$ and $p_{n-1}$ is not in $V(B_P)$ with probability at least $1/8$. We illustrate the above probability space for $n=9$ below.
       
       \begin{center}
       	\begin{tabular}{|c|c|} \hline
       		$Prob$ & Subgraph \\ \hline \hline 
       		$2/8$ & \tikzstyle{vertexL}=[circle,draw, minimum size=8pt, scale=0.6, inner sep=0.5pt]
       		\begin{tikzpicture}[scale=0.2]
       			\node at (1,2) {  };
       			\node (p1) at (1,1) [vertexL]{$p_1$};  
       			\node (p2) at (4,1) [vertexL]{$p_2$};
       			\node (p3) at (7,1) [vertexL]{$p_3$};
       			\node (p4) at (10,1) [vertexL]{$p_4$};
       			\node (p5) at (13,1) [vertexL]{$p_5$};
       			\node (p6) at (16,1) [vertexL]{$p_6$};
       			\node (p7) at (19,1) [vertexL]{$p_7$}; 
       			\node (p8) at (22,1) [vertexL]{$p_8$}; 
       			\node (p9) at (25,1) [vertexL]{$p_9$}; 
       			\draw[line width=0.04cm] (p1) -- (p2);
       			\draw[line width=0.04cm] (p2) -- (p3);
       			\draw[line width=0.04cm] (p3) -- (p4);
       			\draw[line width=0.04cm] (p5) -- (p6);
       			\draw[line width=0.04cm] (p6) -- (p7);
       			\draw[line width=0.04cm] (p7) -- (p8);
       		\end{tikzpicture} \\ \hline
       		$2/8$ & \tikzstyle{vertexL}=[circle,draw, minimum size=8pt, scale=0.6, inner sep=0.5pt]
       		\begin{tikzpicture}[scale=0.2]
       			\node at (1,2) {  };
       			\node (p1) at (1,1) [vertexL]{$p_1$};
       			\node (p2) at (4,1) [vertexL]{$p_2$};
       			\node (p3) at (7,1) [vertexL]{$p_3$};
       			\node (p4) at (10,1) [vertexL]{$p_4$};
       			\node (p5) at (13,1) [vertexL]{$p_5$};
       			\node (p6) at (16,1) [vertexL]{$p_6$};
       			\node (p7) at (19,1) [vertexL]{$p_7$};
       			\node (p8) at (22,1) [vertexL]{$p_8$};
       			\node (p9) at (25,1) [vertexL]{$p_9$};
       			\draw[line width=0.04cm] (p2) -- (p3);
       			\draw[line width=0.04cm] (p3) -- (p4);
       			\draw[line width=0.04cm] (p4) -- (p5);
       			\draw[line width=0.04cm] (p6) -- (p7);
       			\draw[line width=0.04cm] (p7) -- (p8);
       			\draw[line width=0.04cm] (p8) -- (p9);
       		\end{tikzpicture} \\ \hline
       		$2/8$ & \tikzstyle{vertexL}=[circle,draw, minimum size=8pt, scale=0.6, inner sep=0.5pt]
       		\begin{tikzpicture}[scale=0.2]
       			\node at (1,2) {  };
       			\node (p1) at (1,1) [vertexL]{$p_1$};
       			\node (p2) at (4,1) [vertexL]{$p_2$};
       			\node (p3) at (7,1) [vertexL]{$p_3$};
       			\node (p4) at (10,1) [vertexL]{$p_4$};
       			\node (p5) at (13,1) [vertexL]{$p_5$};
       			\node (p6) at (16,1) [vertexL]{$p_6$};
       			\node (p7) at (19,1) [vertexL]{$p_7$};
       			\node (p8) at (22,1) [vertexL]{$p_8$};
       			\node (p9) at (25,1) [vertexL]{$p_9$};
       			\draw[line width=0.04cm] (p1) -- (p2);
       			\draw[line width=0.04cm] (p3) -- (p4);
       			\draw[line width=0.04cm] (p4) -- (p5);
       			\draw[line width=0.04cm] (p5) -- (p6);
       			\draw[line width=0.04cm] (p8) -- (p9);
       		\end{tikzpicture} \\ \hline
       		$1/8$ & \tikzstyle{vertexL}=[circle,draw, minimum size=8pt, scale=0.6, inner sep=0.5pt]
       		\begin{tikzpicture}[scale=0.2]
       			\node at (1,2) {  };
       			\node (p1) at (1,1) [vertexL]{$p_1$};
       			\node (p2) at (4,1) [vertexL]{$p_2$};
       			\node (p3) at (7,1) [vertexL]{$p_3$};
       			\node (p4) at (10,1) [vertexL]{$p_4$};
       			\node (p5) at (13,1) [vertexL]{$p_5$};
       			\node (p6) at (16,1) [vertexL]{$p_6$};
       			\node (p7) at (19,1) [vertexL]{$p_7$};
       			\node (p8) at (22,1) [vertexL]{$p_8$};
       			\node (p9) at (25,1) [vertexL]{$p_9$};
       			\draw[line width=0.04cm] (p1) -- (p2);
       			\draw[line width=0.04cm] (p4) -- (p5);
       			\draw[line width=0.04cm] (p5) -- (p6);
       			\draw[line width=0.04cm] (p6) -- (p7);
       			\draw[line width=0.04cm] (p8) -- (p9);
       		\end{tikzpicture} \\ \hline
       		$1/8$ & \tikzstyle{vertexL}=[circle,draw, minimum size=8pt, scale=0.6, inner sep=0.5pt]
       		\begin{tikzpicture}[scale=0.2]
       			\node at (1,2) {  };
       			\node (p1) at (1,1) [vertexL]{$p_1$};
       			\node (p2) at (4,1) [vertexL]{$p_2$};
       			\node (p3) at (7,1) [vertexL]{$p_3$};
       			\node (p4) at (10,1) [vertexL]{$p_4$};
       			\node (p5) at (13,1) [vertexL]{$p_5$};
       			\node (p6) at (16,1) [vertexL]{$p_6$};
       			\node (p7) at (19,1) [vertexL]{$p_7$};
       			\node (p8) at (22,1) [vertexL]{$p_8$};
       			\node (p9) at (25,1) [vertexL]{$p_9$};
       			\draw[line width=0.04cm] (p2) -- (p3);
       			\draw[line width=0.04cm] (p7) -- (p8);
       		\end{tikzpicture} \\ \hline
       	\end{tabular}
       \end{center}

       {\bf Case 4:} $n=4k+2$, where $k \geq 1$.
       
       First consider the case when $p_1 p_{n-1} \not\in E(H)$.  In this case, let $B_P$ be bipartite subgraphs induced by the following sets (with the given probabilities).
       
       \begin{itemize}
       	\item Use $\{p_1,p_2,p_3,p_4\}, \{p_5,p_6,p_7,p_8\}, \ldots , \{p_{n-5},p_{n-4},p_{n-3},p_{n-2}\}, \{p_{n-1},p_{n}\}$ with probability $2/8$.
       	\item Use $ \{p_1,p_2\}, \{p_3,p_4,p_5,p_6\}, \{p_7,p_8,p_9,p_{10}\}, \ldots , \{p_{n-3},p_{n-2},p_{n-1},p_{n}\}$ with probability $2/8$.
       	\item Use $\{p_2,p_3,p_4,p_5\}, \{p_6,p_7,p_8,p_{9}\}, \ldots , \{p_{n-4},p_{n-3},p_{n-2},p_{n-1}\}$ with probability $1/8$.
       	\item Use $ \{p_2,p_3\}, \{p_4,p_5,p_6,p_7\}, \{p_8,p_9,p_{10},p_{11}\}, \ldots , \{p_{n-6},p_{n-5},p_{n-4},p_{n-3}\}, \{p_{n-2},p_{n-1}\}$ with probability $1/8$.
       	\item Use $ \{p_1,p_2\}$ with probability $1/8$.
       	\item Use $ \{p_2,p_3\}, \{p_4,p_5\}, \ldots , \{p_{n-4},p_{n-3}\},\{p_{n-2},p_{n-1},p_n,p_1\}$ with probability $1/8$.
       \end{itemize}

      Now, every edge of $P$ is in $B_P$ with probability at least $5/8$ and every vertex except $p_2$ is not in $V(B_P)$ with probability at least $1/8$. We illustrate the above probability space for $n=6$ and $n=10$ below (in the last picture the A's indicate one partite set and the B's the other partite set of the subgraph induced by $\{p_1,p_{n-2},p_{n-1},p_n\}$).
       
       \begin{center}
       	\begin{tabular}{|c|c|c|} \hline
       		$Prob$ & Subgraph $n=6$ & Subgraph $n=10$ \\ \hline \hline
       		$2/8$ & \tikzstyle{vertexL}=[circle,draw, minimum size=8pt, scale=0.6, inner sep=0.5pt]
       		\begin{tikzpicture}[scale=0.2]
       			\node at (1,2) {  };
       			\node (p1) at (1,1) [vertexL]{$p_1$};
       			\node (p2) at (4,1) [vertexL]{$p_2$};
       			\node (p3) at (7,1) [vertexL]{$p_3$};
       			\node (p4) at (10,1) [vertexL]{$p_4$};
       			\node (p5) at (13,1) [vertexL]{$p_5$};
       			\node (p6) at (16,1) [vertexL]{$p_6$};
       			\draw[line width=0.04cm] (p1) -- (p2);
       			\draw[line width=0.04cm] (p2) -- (p3);
       			\draw[line width=0.04cm] (p3) -- (p4);
       			\draw[line width=0.04cm] (p5) -- (p6);
       		\end{tikzpicture} &  \tikzstyle{vertexL}=[circle,draw, minimum size=8pt, scale=0.6, inner sep=0.5pt]
       		\tikzstyle{vertexS}=[circle,draw, minimum size=8pt, scale=0.45, inner sep=0.5pt]
       		\begin{tikzpicture}[scale=0.2]
       			\node at (1,2) {  };
       			\node (p1) at (1,1) [vertexL]{$p_1$};
       			\node (p2) at (4,1) [vertexL]{$p_2$};
       			\node (p3) at (7,1) [vertexL]{$p_3$};
       			\node (p4) at (10,1) [vertexL]{$p_4$}; 
       			\node (p5) at (13,1) [vertexL]{$p_5$}; 
       			\node (p6) at (16,1) [vertexL]{$p_6$};
       			\node (p7) at (19,1) [vertexL]{$p_7$};
       			\node (p8) at (22,1) [vertexL]{$p_8$};
       			\node (p9) at (25,1) [vertexL]{$p_9$};
       			\node (p10) at (28,1) [vertexS]{$p_{10}$};
       			\draw[line width=0.04cm] (p1) -- (p2);
       			\draw[line width=0.04cm] (p2) -- (p3);
       			\draw[line width=0.04cm] (p3) -- (p4);
       			\draw[line width=0.04cm] (p5) -- (p6);
       			\draw[line width=0.04cm] (p6) -- (p7);
       			\draw[line width=0.04cm] (p7) -- (p8);
       			\draw[line width=0.04cm] (p9) -- (p10);
       		\end{tikzpicture} \\ \hline
       		$2/8$ & \tikzstyle{vertexL}=[circle,draw, minimum size=8pt, scale=0.6, inner sep=0.5pt]
       		\begin{tikzpicture}[scale=0.2]
       			\node at (1,2) {  };
       			\node (p1) at (1,1) [vertexL]{$p_1$};
       			\node (p2) at (4,1) [vertexL]{$p_2$};
       			\node (p3) at (7,1) [vertexL]{$p_3$};
       			\node (p4) at (10,1) [vertexL]{$p_4$};
       			\node (p5) at (13,1) [vertexL]{$p_5$};
       			\node (p6) at (16,1) [vertexL]{$p_6$};
       			\draw[line width=0.04cm] (p1) -- (p2);
       			\draw[line width=0.04cm] (p3) -- (p4);
       			\draw[line width=0.04cm] (p4) -- (p5);
       			\draw[line width=0.04cm] (p5) -- (p6);
       		\end{tikzpicture} &  \tikzstyle{vertexL}=[circle,draw, minimum size=8pt, scale=0.6, inner sep=0.5pt]
       		\tikzstyle{vertexS}=[circle,draw, minimum size=8pt, scale=0.45, inner sep=0.5pt]
       		\begin{tikzpicture}[scale=0.2]
       			\node at (1,2) {  };
       			\node (p1) at (1,1) [vertexL]{$p_1$};
       			\node (p2) at (4,1) [vertexL]{$p_2$};
       			\node (p3) at (7,1) [vertexL]{$p_3$};
       			\node (p4) at (10,1) [vertexL]{$p_4$}; 
       			\node (p5) at (13,1) [vertexL]{$p_5$}; 
       			\node (p6) at (16,1) [vertexL]{$p_6$};
       			\node (p7) at (19,1) [vertexL]{$p_7$};
       			\node (p8) at (22,1) [vertexL]{$p_8$};
       			\node (p9) at (25,1) [vertexL]{$p_9$};
       			\node (p10) at (28,1) [vertexS]{$p_{10}$};
       			\draw[line width=0.04cm] (p1) -- (p2);
       			\draw[line width=0.04cm] (p3) -- (p4);
       			\draw[line width=0.04cm] (p4) -- (p5);
       			\draw[line width=0.04cm] (p5) -- (p6);
       			\draw[line width=0.04cm] (p7) -- (p8);
       			\draw[line width=0.04cm] (p8) -- (p9);
       			\draw[line width=0.04cm] (p9) -- (p10);
       		\end{tikzpicture}  \\ \hline
       		$1/8$ & \tikzstyle{vertexL}=[circle,draw, minimum size=8pt, scale=0.6, inner sep=0.5pt]
       		\begin{tikzpicture}[scale=0.2]
       			\node at (1,2) {  };
       			\node (p1) at (1,1) [vertexL]{$p_1$};
       			\node (p2) at (4,1) [vertexL]{$p_2$};
       			\node (p3) at (7,1) [vertexL]{$p_3$};
       			\node (p4) at (10,1) [vertexL]{$p_4$};
       			\node (p5) at (13,1) [vertexL]{$p_5$};
       			\node (p6) at (16,1) [vertexL]{$p_6$};
       			\draw[line width=0.04cm] (p2) -- (p3);
       			\draw[line width=0.04cm] (p3) -- (p4);
       			\draw[line width=0.04cm] (p4) -- (p5);
       		\end{tikzpicture} &  \tikzstyle{vertexL}=[circle,draw, minimum size=8pt, scale=0.6, inner sep=0.5pt]
       		\tikzstyle{vertexS}=[circle,draw, minimum size=8pt, scale=0.45, inner sep=0.5pt]
       		\begin{tikzpicture}[scale=0.2]
       			\node at (1,2) {  };
       			\node (p1) at (1,1) [vertexL]{$p_1$};
       			\node (p2) at (4,1) [vertexL]{$p_2$};
       			\node (p3) at (7,1) [vertexL]{$p_3$};
       			\node (p4) at (10,1) [vertexL]{$p_4$}; 
       			\node (p5) at (13,1) [vertexL]{$p_5$}; 
       			\node (p6) at (16,1) [vertexL]{$p_6$};
       			\node (p7) at (19,1) [vertexL]{$p_7$};
       			\node (p8) at (22,1) [vertexL]{$p_8$};
       			\node (p9) at (25,1) [vertexL]{$p_9$};
       			\node (p10) at (28,1) [vertexS]{$p_{10}$};
       			\draw[line width=0.04cm] (p2) -- (p3);
       			\draw[line width=0.04cm] (p3) -- (p4);
       			\draw[line width=0.04cm] (p4) -- (p5);
       			\draw[line width=0.04cm] (p6) -- (p7);
       			\draw[line width=0.04cm] (p7) -- (p8);
       			\draw[line width=0.04cm] (p8) -- (p9);
       		\end{tikzpicture} \\ \hline
       		$1/8$ & \tikzstyle{vertexL}=[circle,draw, minimum size=8pt, scale=0.6, inner sep=0.5pt]
       		\begin{tikzpicture}[scale=0.2]
       			\node at (1,2) {  };
       			\node (p1) at (1,1) [vertexL]{$p_1$};
       			\node (p2) at (4,1) [vertexL]{$p_2$};
       			\node (p3) at (7,1) [vertexL]{$p_3$};
       			\node (p4) at (10,1) [vertexL]{$p_4$};
       			\node (p5) at (13,1) [vertexL]{$p_5$};
       			\node (p6) at (16,1) [vertexL]{$p_6$};
       			\draw[line width=0.04cm] (p2) -- (p3);
       			\draw[line width=0.04cm] (p4) -- (p5);
       		\end{tikzpicture} &  \tikzstyle{vertexL}=[circle,draw, minimum size=8pt, scale=0.6, inner sep=0.5pt]
       		\tikzstyle{vertexS}=[circle,draw, minimum size=8pt, scale=0.45, inner sep=0.5pt]
       		\begin{tikzpicture}[scale=0.2]
       			\node at (1,2) {  };
       			\node (p1) at (1,1) [vertexL]{$p_1$};
       			\node (p2) at (4,1) [vertexL]{$p_2$};
       			\node (p3) at (7,1) [vertexL]{$p_3$};
       			\node (p4) at (10,1) [vertexL]{$p_4$}; 
       			\node (p5) at (13,1) [vertexL]{$p_5$}; 
       			\node (p6) at (16,1) [vertexL]{$p_6$};
       			\node (p7) at (19,1) [vertexL]{$p_7$};
       			\node (p8) at (22,1) [vertexL]{$p_8$};
       			\node (p9) at (25,1) [vertexL]{$p_9$};
       			\node (p10) at (28,1) [vertexS]{$p_{10}$};
       			\draw[line width=0.04cm] (p2) -- (p3);
       			\draw[line width=0.04cm] (p4) -- (p5);
       			\draw[line width=0.04cm] (p5) -- (p6);
       			\draw[line width=0.04cm] (p6) -- (p7);
       			\draw[line width=0.04cm] (p8) -- (p9);
       		\end{tikzpicture} \\ \hline
       		$1/8$ & \tikzstyle{vertexL}=[circle,draw, minimum size=8pt, scale=0.6, inner sep=0.5pt]
       		\begin{tikzpicture}[scale=0.2]
       			\node at (1,2) {  };
       			\node (p1) at (1,1) [vertexL]{$p_1$};
       			\node (p2) at (4,1) [vertexL]{$p_2$};
       			\node (p3) at (7,1) [vertexL]{$p_3$};
       			\node (p4) at (10,1) [vertexL]{$p_4$};
       			\node (p5) at (13,1) [vertexL]{$p_5$};
       			\node (p6) at (16,1) [vertexL]{$p_6$};
       			\draw[line width=0.04cm] (p1) -- (p2);
       		\end{tikzpicture} &  \tikzstyle{vertexL}=[circle,draw, minimum size=8pt, scale=0.6, inner sep=0.5pt]
       		\tikzstyle{vertexS}=[circle,draw, minimum size=8pt, scale=0.45, inner sep=0.5pt]
       		\begin{tikzpicture}[scale=0.2]
       			\node at (1,2) {  };
       			\node (p1) at (1,1) [vertexL]{$p_1$};
       			\node (p2) at (4,1) [vertexL]{$p_2$};
       			\node (p3) at (7,1) [vertexL]{$p_3$};
       			\node (p4) at (10,1) [vertexL]{$p_4$}; 
       			\node (p5) at (13,1) [vertexL]{$p_5$}; 
       			\node (p6) at (16,1) [vertexL]{$p_6$};
       			\node (p7) at (19,1) [vertexL]{$p_7$};
       			\node (p8) at (22,1) [vertexL]{$p_8$};
       			\node (p9) at (25,1) [vertexL]{$p_9$};
       			\node (p10) at (28,1) [vertexS]{$p_{10}$};
       			\draw[line width=0.04cm] (p1) -- (p2);
       		\end{tikzpicture} \\ \hline
       		$1/8$ & \tikzstyle{vertexL}=[circle,draw, minimum size=8pt, scale=0.6, inner sep=0.5pt]
       		\begin{tikzpicture}[scale=0.2]
       			\node at (1,2) {  };
       			\node (p1) at (1,1) [vertexL]{$p_1$}; \node at (1,-0.4) {{\tiny A}};
       			\node (p2) at (4,1) [vertexL]{$p_2$};
       			\node (p3) at (7,1) [vertexL]{$p_3$};
       			\node (p4) at (10,1) [vertexL]{$p_4$}; \node at (10,-0.4) {{\tiny B}};
       			\node (p5) at (13,1) [vertexL]{$p_5$}; \node at (13,-0.4) {{\tiny A}};
       			\node (p6) at (16,1) [vertexL]{$p_6$}; \node at (16,-0.4) {{\tiny B}};
       			\draw[line width=0.04cm] (p2) -- (p3);
       			\draw[line width=0.04cm] (p4) -- (p5);
       			\draw[line width=0.04cm] (p5) -- (p6);
       		\end{tikzpicture} &  \tikzstyle{vertexL}=[circle,draw, minimum size=8pt, scale=0.6, inner sep=0.5pt]
       		\tikzstyle{vertexS}=[circle,draw, minimum size=8pt, scale=0.45, inner sep=0.5pt]
       		\begin{tikzpicture}[scale=0.2]
       			\node at (1,2) {  };
       			\node (p1) at (1,1) [vertexL]{$p_1$};  \node at (1,-0.4) {{\tiny A}};
       			\node (p2) at (4,1) [vertexL]{$p_2$};
       			\node (p3) at (7,1) [vertexL]{$p_3$};
       			\node (p4) at (10,1) [vertexL]{$p_4$}; 
       			\node (p5) at (13,1) [vertexL]{$p_5$}; 
       			\node (p6) at (16,1) [vertexL]{$p_6$};
       			\node (p7) at (19,1) [vertexL]{$p_7$};
       			\node (p8) at (22,1) [vertexL]{$p_8$}; \node at (22,-0.4) {{\tiny B}};
       			\node (p9) at (25,1) [vertexL]{$p_9$}; \node at (25,-0.4) {{\tiny A}};
       			\node (p10) at (28,1) [vertexS]{$p_{10}$}; \node at (28,-0.4) {{\tiny B}};
       			\draw[line width=0.04cm] (p2) -- (p3);
       			\draw[line width=0.04cm] (p4) -- (p5);
       			\draw[line width=0.04cm] (p6) -- (p7);
       			\draw[line width=0.04cm] (p8) -- (p9);
       			\draw[line width=0.04cm] (p9) -- (p10);
       		\end{tikzpicture} \\ \hline
       	\end{tabular}
       \end{center}
       
       We now consider the case when $p_1 p_{n-1} \in E(H)$.  In this case we pick the bipartite subgraphs induced by the following sets (with the given probabilities).
       
       \begin{itemize}
       	\item Use $\{p_1,p_2,p_3,p_4\}, \{p_5,p_6,p_7,p_8\}, \ldots , \{p_{n-5},p_{n-4},p_{n-3},p_{n-2}\}, \{p_{n-1},p_{n}\}$ with probability $2/8$.
       	\item Use $ \{p_1,p_2\}, \{p_3,p_4,p_5,p_6\}, \{p_7,p_8,p_9,p_{10}\}, \ldots , \{p_{n-3},p_{n-2},p_{n-1},p_{n}\}$ with probability $2/8$.
       	\item Use $\{p_2,p_3,p_4,p_5\}, \{p_6,p_7,p_8,p_{9}\}, \ldots , \{p_{n-4},p_{n-3},p_{n-2},p_{n-1}\}$ with probability $1/8$.
       	\item Use $ \{p_2,p_3\}, \{p_4,p_5,p_6,p_7\}, \{p_8,p_9,p_{10},p_{11}\}, \ldots , \{p_{n-6},p_{n-5},p_{n-4},p_{n-3}\}, \{p_{n-2},p_{n-1}\}$ with probability $1/8$.
       	\item Use $ \{p_1,p_2\},\{p_{n-1},p_{n}\}$ with probability $1/8$.
       	\item Use $ \{p_2,p_3\}, \{p_4,p_5\}, \ldots , \{p_{n-2},p_{n-1}\}$ with probability $1/8$.
       \end{itemize}
       
       Now, every edge of $P$ is in $B_C$ with probability at least $5/8$ and every vertex except $p_2$ and $p_{n-1}$ will be an isolated vertex with probability at least $1/8$
       (and $p_2$ is incident with the edge $p_2 p_n$ and $p_{n-1}$ is incident with the edge $p_1 p_{n-1}$).
       We illustrate the above probability space for $n=6$  and $n=10$ below.

       \begin{center}
       	\begin{tabular}{|c|c|c|} \hline
       		$Prob$ & Subgraph $n=6$ & Subgraph $n=10$ \\ \hline \hline
       		$2/8$ & \tikzstyle{vertexL}=[circle,draw, minimum size=8pt, scale=0.6, inner sep=0.5pt]
       		\begin{tikzpicture}[scale=0.2]
       			\node at (1,2) {  };
       			\node (p1) at (1,1) [vertexL]{$p_1$};
       			\node (p2) at (4,1) [vertexL]{$p_2$};
       			\node (p3) at (7,1) [vertexL]{$p_3$};
       			\node (p4) at (10,1) [vertexL]{$p_4$};
       			\node (p5) at (13,1) [vertexL]{$p_5$};
       			\node (p6) at (16,1) [vertexL]{$p_6$};
       			\draw[line width=0.04cm] (p1) -- (p2);
       			\draw[line width=0.04cm] (p2) -- (p3);
       			\draw[line width=0.04cm] (p3) -- (p4);
       			\draw[line width=0.04cm] (p5) -- (p6);
       		\end{tikzpicture} &  \tikzstyle{vertexL}=[circle,draw, minimum size=8pt, scale=0.6, inner sep=0.5pt]
       		\tikzstyle{vertexS}=[circle,draw, minimum size=8pt, scale=0.45, inner sep=0.5pt]
       		\begin{tikzpicture}[scale=0.2]
       			\node at (1,2) {  }; 
       			\node (p1) at (1,1) [vertexL]{$p_1$};
       			\node (p2) at (4,1) [vertexL]{$p_2$};
       			\node (p3) at (7,1) [vertexL]{$p_3$};
       			\node (p4) at (10,1) [vertexL]{$p_4$};
       			\node (p5) at (13,1) [vertexL]{$p_5$};
       			\node (p6) at (16,1) [vertexL]{$p_6$};
       			\node (p7) at (19,1) [vertexL]{$p_7$};
       			\node (p8) at (22,1) [vertexL]{$p_8$};
       			\node (p9) at (25,1) [vertexL]{$p_9$};
       			\node (p10) at (28,1) [vertexS]{$p_{10}$};
       			\draw[line width=0.04cm] (p1) -- (p2);
       			\draw[line width=0.04cm] (p2) -- (p3);
       			\draw[line width=0.04cm] (p3) -- (p4);
       			\draw[line width=0.04cm] (p5) -- (p6);
       			\draw[line width=0.04cm] (p6) -- (p7);
       			\draw[line width=0.04cm] (p7) -- (p8);
       			\draw[line width=0.04cm] (p9) -- (p10);
       		\end{tikzpicture} \\ \hline
       		$2/8$ & \tikzstyle{vertexL}=[circle,draw, minimum size=8pt, scale=0.6, inner sep=0.5pt]
       		\begin{tikzpicture}[scale=0.2]
       			\node at (1,2) {  };
       			\node (p1) at (1,1) [vertexL]{$p_1$};
       			\node (p2) at (4,1) [vertexL]{$p_2$};
       			\node (p3) at (7,1) [vertexL]{$p_3$};
       			\node (p4) at (10,1) [vertexL]{$p_4$};
       			\node (p5) at (13,1) [vertexL]{$p_5$};
       			\node (p6) at (16,1) [vertexL]{$p_6$};
       			\draw[line width=0.04cm] (p1) -- (p2);
       			\draw[line width=0.04cm] (p3) -- (p4);
       			\draw[line width=0.04cm] (p4) -- (p5);
       			\draw[line width=0.04cm] (p5) -- (p6);
       		\end{tikzpicture} &  \tikzstyle{vertexL}=[circle,draw, minimum size=8pt, scale=0.6, inner sep=0.5pt]
       		\tikzstyle{vertexS}=[circle,draw, minimum size=8pt, scale=0.45, inner sep=0.5pt]
       		\begin{tikzpicture}[scale=0.2]
       			\node at (1,2) {  };
       			\node (p1) at (1,1) [vertexL]{$p_1$};
       			\node (p2) at (4,1) [vertexL]{$p_2$};
       			\node (p3) at (7,1) [vertexL]{$p_3$};
       			\node (p4) at (10,1) [vertexL]{$p_4$}; 
       			\node (p5) at (13,1) [vertexL]{$p_5$}; 
       			\node (p6) at (16,1) [vertexL]{$p_6$};
       			\node (p7) at (19,1) [vertexL]{$p_7$};
       			\node (p8) at (22,1) [vertexL]{$p_8$};
       			\node (p9) at (25,1) [vertexL]{$p_9$};
       			\node (p10) at (28,1) [vertexS]{$p_{10}$};
       			\draw[line width=0.04cm] (p1) -- (p2);
       			\draw[line width=0.04cm] (p3) -- (p4);
       			\draw[line width=0.04cm] (p4) -- (p5);
       			\draw[line width=0.04cm] (p5) -- (p6);
       			\draw[line width=0.04cm] (p7) -- (p8);
       			\draw[line width=0.04cm] (p8) -- (p9);
       			\draw[line width=0.04cm] (p9) -- (p10);
       		\end{tikzpicture} \\ \hline
       		$1/8$ & \tikzstyle{vertexL}=[circle,draw, minimum size=8pt, scale=0.6, inner sep=0.5pt]
       		\begin{tikzpicture}[scale=0.2]
       			\node at (1,2) {  };
       			\node (p1) at (1,1) [vertexL]{$p_1$};
       			\node (p2) at (4,1) [vertexL]{$p_2$};
       			\node (p3) at (7,1) [vertexL]{$p_3$};
       			\node (p4) at (10,1) [vertexL]{$p_4$};
       			\node (p5) at (13,1) [vertexL]{$p_5$};
       			\node (p6) at (16,1) [vertexL]{$p_6$};
       			\draw[line width=0.04cm] (p2) -- (p3);
       			\draw[line width=0.04cm] (p3) -- (p4);
       			\draw[line width=0.04cm] (p4) -- (p5);
       		\end{tikzpicture} &  \tikzstyle{vertexL}=[circle,draw, minimum size=8pt, scale=0.6, inner sep=0.5pt]
       		\tikzstyle{vertexS}=[circle,draw, minimum size=8pt, scale=0.45, inner sep=0.5pt]
       		\begin{tikzpicture}[scale=0.2]
       			\node at (1,2) {  };
       			\node (p1) at (1,1) [vertexL]{$p_1$};
       			\node (p2) at (4,1) [vertexL]{$p_2$};
       			\node (p3) at (7,1) [vertexL]{$p_3$};
       			\node (p4) at (10,1) [vertexL]{$p_4$}; 
       			\node (p5) at (13,1) [vertexL]{$p_5$}; 
       			\node (p6) at (16,1) [vertexL]{$p_6$};
       			\node (p7) at (19,1) [vertexL]{$p_7$};
       			\node (p8) at (22,1) [vertexL]{$p_8$};
       			\node (p9) at (25,1) [vertexL]{$p_9$};
       			\node (p10) at (28,1) [vertexS]{$p_{10}$};
       			\draw[line width=0.04cm] (p2) -- (p3);
       			\draw[line width=0.04cm] (p3) -- (p4);
       			\draw[line width=0.04cm] (p4) -- (p5);
       			\draw[line width=0.04cm] (p6) -- (p7);
       			\draw[line width=0.04cm] (p7) -- (p8);
       			\draw[line width=0.04cm] (p8) -- (p9);
       		\end{tikzpicture} \\ \hline
       		$1/8$ & \tikzstyle{vertexL}=[circle,draw, minimum size=8pt, scale=0.6, inner sep=0.5pt]
       		\begin{tikzpicture}[scale=0.2]
       			\node at (1,2) {  };
       			\node (p1) at (1,1) [vertexL]{$p_1$};
       			\node (p2) at (4,1) [vertexL]{$p_2$};
       			\node (p3) at (7,1) [vertexL]{$p_3$};
       			\node (p4) at (10,1) [vertexL]{$p_4$};
       			\node (p5) at (13,1) [vertexL]{$p_5$};
       			\node (p6) at (16,1) [vertexL]{$p_6$};
       			\draw[line width=0.04cm] (p2) -- (p3);
       			\draw[line width=0.04cm] (p4) -- (p5);
       		\end{tikzpicture} &  \tikzstyle{vertexL}=[circle,draw, minimum size=8pt, scale=0.6, inner sep=0.5pt]
       		\tikzstyle{vertexS}=[circle,draw, minimum size=8pt, scale=0.45, inner sep=0.5pt]
       		\begin{tikzpicture}[scale=0.2]
       			\node at (1,2) {  };
       			\node (p1) at (1,1) [vertexL]{$p_1$};
       			\node (p2) at (4,1) [vertexL]{$p_2$};
       			\node (p3) at (7,1) [vertexL]{$p_3$};
       			\node (p4) at (10,1) [vertexL]{$p_4$}; 
       			\node (p5) at (13,1) [vertexL]{$p_5$}; 
       			\node (p6) at (16,1) [vertexL]{$p_6$};
       			\node (p7) at (19,1) [vertexL]{$p_7$};
       			\node (p8) at (22,1) [vertexL]{$p_8$};
       			\node (p9) at (25,1) [vertexL]{$p_9$};
       			\node (p10) at (28,1) [vertexS]{$p_{10}$};
       			\draw[line width=0.04cm] (p2) -- (p3);
       			\draw[line width=0.04cm] (p4) -- (p5);
       			\draw[line width=0.04cm] (p5) -- (p6);
       			\draw[line width=0.04cm] (p6) -- (p7);
       			\draw[line width=0.04cm] (p8) -- (p9);
       		\end{tikzpicture} \\ \hline
       		$1/8$ & \tikzstyle{vertexL}=[circle,draw, minimum size=8pt, scale=0.6, inner sep=0.5pt]
       		\begin{tikzpicture}[scale=0.2]
       			\node at (1,2) {  };
       			\node (p1) at (1,1) [vertexL]{$p_1$};
       			\node (p2) at (4,1) [vertexL]{$p_2$};
       			\node (p3) at (7,1) [vertexL]{$p_3$};
       			\node (p4) at (10,1) [vertexL]{$p_4$};
       			\node (p5) at (13,1) [vertexL]{$p_5$};
       			\node (p6) at (16,1) [vertexL]{$p_6$};
       			\draw[line width=0.04cm] (p1) -- (p2);
       			\draw[line width=0.04cm] (p5) -- (p6);
       		\end{tikzpicture} &  \tikzstyle{vertexL}=[circle,draw, minimum size=8pt, scale=0.6, inner sep=0.5pt]
       		\tikzstyle{vertexS}=[circle,draw, minimum size=8pt, scale=0.45, inner sep=0.5pt]
       		\begin{tikzpicture}[scale=0.2]
       			\node at (1,2) {  };
       			\node (p1) at (1,1) [vertexL]{$p_1$};
       			\node (p2) at (4,1) [vertexL]{$p_2$};
       			\node (p3) at (7,1) [vertexL]{$p_3$};
       			\node (p4) at (10,1) [vertexL]{$p_4$}; 
       			\node (p5) at (13,1) [vertexL]{$p_5$}; 
       			\node (p6) at (16,1) [vertexL]{$p_6$};
       			\node (p7) at (19,1) [vertexL]{$p_7$};
       			\node (p8) at (22,1) [vertexL]{$p_8$};
       			\node (p9) at (25,1) [vertexL]{$p_9$};
       			\node (p10) at (28,1) [vertexS]{$p_{10}$};
       			\draw[line width=0.04cm] (p1) -- (p2);
       			\draw[line width=0.04cm] (p9) -- (p10);
       		\end{tikzpicture} \\ \hline
       		$1/8$ & \tikzstyle{vertexL}=[circle,draw, minimum size=8pt, scale=0.6, inner sep=0.5pt]
       		\begin{tikzpicture}[scale=0.2]
       			\node at (1,2) {  };
       			\node (p1) at (1,1) [vertexL]{$p_1$}; 
       			\node (p2) at (4,1) [vertexL]{$p_2$};
       			\node (p3) at (7,1) [vertexL]{$p_3$};
       			\node (p4) at (10,1) [vertexL]{$p_4$}; 
       			\node (p5) at (13,1) [vertexL]{$p_5$}; 
       			\node (p6) at (16,1) [vertexL]{$p_6$}; 
       			\draw[line width=0.04cm] (p2) -- (p3);
       			\draw[line width=0.04cm] (p4) -- (p5);
       		\end{tikzpicture} &  \tikzstyle{vertexL}=[circle,draw, minimum size=8pt, scale=0.6, inner sep=0.5pt]
       		\tikzstyle{vertexS}=[circle,draw, minimum size=8pt, scale=0.45, inner sep=0.5pt]
       		\begin{tikzpicture}[scale=0.2]
       			\node at (1,2) {  };
       			\node (p1) at (1,1) [vertexL]{$p_1$};
       			\node (p2) at (4,1) [vertexL]{$p_2$};
       			\node (p3) at (7,1) [vertexL]{$p_3$};
       			\node (p4) at (10,1) [vertexL]{$p_4$}; 
       			\node (p5) at (13,1) [vertexL]{$p_5$}; 
       			\node (p6) at (16,1) [vertexL]{$p_6$};
       			\node (p7) at (19,1) [vertexL]{$p_7$};
       			\node (p8) at (22,1) [vertexL]{$p_8$};
       			\node (p9) at (25,1) [vertexL]{$p_9$};
       			\node (p10) at (28,1) [vertexS]{$p_{10}$};
       			\draw[line width=0.04cm] (p2) -- (p3);
       			\draw[line width=0.04cm] (p4) -- (p5);
       			\draw[line width=0.04cm] (p6) -- (p7);
       			\draw[line width=0.04cm] (p8) -- (p9);
       		\end{tikzpicture} \\ \hline
       	\end{tabular}
       \end{center}

       {\bf Case 5:} $n=4k+3$, where $k \geq 1$
       
    First consider the case when $p_1 p_{n-1} \not\in E(H)$.  In this case we pick the bipartite subgraphs induced by the following sets (with the given probabilities).
   \begin{itemize}
	\item Use $\{p_1,p_2,p_3,p_4\}, \{p_5,p_6,p_7,p_8\}, \ldots , \{p_{n-6}, p_{n-5},p_{n-4},p_{n-3}\}, \{p_{n-1},p_{n}\}$ with probability $2/8$.
	\item Use $ \{p_1,p_2\}, \{p_3,p_4,p_5,p_6\}, \{p_7,p_8,p_{9},p_{10}\}, \ldots , \{p_{n-4},p_{n-3},p_{n-2},p_{n-1}\}$ with probability $2/8$.
	\item Use $ \{p_2,p_3\}, \{p_4,p_5,p_6,p_7\}, \{p_8,p_9,p_{10},p_{11}\}, \ldots , \{p_{n-3},p_{n-2},p_{n-1},p_{n}\}$ with probability $1/8$.

	\item Use $ \{p_1,p_2\}, \{p_4,p_5,p_6,p_7\}, \{p_8,p_9,p_{10},p_{11}\}, \ldots , \{p_{n-3},p_{n-2},p_{n-1},p_{n}\}$ with probability $1/8$.

		\item Use $\{p_2,p_3,p_4,p_5\}, \{p_6,p_7,p_8,p_{9}\}, \ldots , \{p_{n-5},p_{n-4},p_{n-3},p_{n-2}\}$ with probability $1/8$.
	\item Use $ \{p_2,p_3\}$,$\{p_{n-2},p_{n-1},p_{n},p_1\}$ with probability $1/8$.

\end{itemize}
      Now, every edge of $P$ is in $B_P$ with probability at least $5/8$ and every vertex except $p_2$ is not in $V(B_P)$ with probability at least $1/8$ (and $p_2$ is incident with the edge $p_2 p_n$). We illustrate the above probability space for $n=7$ and $n=11$ below (in the last picture the A's indicate one partite set and the B's the other partite set of the subgraph induced by $\{p_1,p_{n-2},p_{n-1},p_n\}$).

         \begin{center}
       	\begin{tabular}{|c|c|c|} \hline
       		$Prob$ & Subgraph $n=7$ & Subgraph $n=11$ \\ \hline \hline
       		$2/8$ & \tikzstyle{vertexL}=[circle,draw, minimum size=8pt, scale=0.6, inner sep=0.5pt]
       		\begin{tikzpicture}[scale=0.2]
       			\node at (1,2) {  };
       			\node (p1) at (1,1) [vertexL]{$p_1$}; 
       			\node (p2) at (4,1) [vertexL]{$p_2$};
       			\node (p3) at (7,1) [vertexL]{$p_3$};
       			\node (p4) at (10,1) [vertexL]{$p_4$}; 
       			\node (p5) at (13,1) [vertexL]{$p_5$}; 
       			\node (p6) at (16,1) [vertexL]{$p_6$}; 
       			\node (p7) at (19,1) [vertexL]{$p_7$};
       			\draw[line width=0.04cm] (p1) -- (p2);
       			\draw[line width=0.04cm] (p2) -- (p3);
       			\draw[line width=0.04cm] (p3) -- (p4);
       			\draw[line width=0.04cm] (p6) -- (p7);
       		\end{tikzpicture} &  \tikzstyle{vertexL}=[circle,draw, minimum size=8pt, scale=0.6, inner sep=0.5pt]
       		\tikzstyle{vertexS}=[circle,draw, minimum size=8pt, scale=0.45, inner sep=0.5pt]
       		\begin{tikzpicture}[scale=0.2]
       			\node at (1,2) {  };
       			\node (p1) at (1,1) [vertexL]{$p_1$};
       			\node (p2) at (4,1) [vertexL]{$p_2$};
       			\node (p3) at (7,1) [vertexL]{$p_3$};
       			\node (p4) at (10,1) [vertexL]{$p_4$}; 
       			\node (p5) at (13,1) [vertexL]{$p_5$};
       			\node (p6) at (16,1) [vertexL]{$p_6$};
       			\node (p7) at (19,1) [vertexL]{$p_7$};
       			\node (p8) at (22,1) [vertexL]{$p_8$};
       			\node (p9) at (25,1) [vertexL]{$p_9$}; 
       			\node (p10) at (28,1) [vertexS]{$p_{10}$}; 
       			\node (p11) at (31,1) [vertexS]{$p_{11}$};
       			\draw[line width=0.04cm] (p1) -- (p2);
       			\draw[line width=0.04cm] (p2) -- (p3);
       			\draw[line width=0.04cm] (p3) -- (p4);
       			\draw[line width=0.04cm] (p5) -- (p6);
       			\draw[line width=0.04cm] (p6) -- (p7);
       			\draw[line width=0.04cm] (p7) -- (p8);
       			\draw[line width=0.04cm] (p10) -- (p11);
       		\end{tikzpicture} \\ \hline
       		$2/8$ & \tikzstyle{vertexL}=[circle,draw, minimum size=8pt, scale=0.6, inner sep=0.5pt]
       		\begin{tikzpicture}[scale=0.2]
       			\node at (1,2) {  };
       			\node (p1) at (1,1) [vertexL]{$p_1$}; 
       			\node (p2) at (4,1) [vertexL]{$p_2$};
       			\node (p3) at (7,1) [vertexL]{$p_3$}; 
       			\node (p4) at (10,1) [vertexL]{$p_4$};
       			\node (p5) at (13,1) [vertexL]{$p_5$};
       			\node (p6) at (16,1) [vertexL]{$p_6$};
       			\node (p7) at (19,1) [vertexL]{$p_7$};
       			\draw[line width=0.04cm] (p1) -- (p2);
       			  \draw[line width=0.04cm] (p3) -- (p4);
       			\draw[line width=0.04cm] (p4) -- (p5);
       			\draw[line width=0.04cm] (p5) -- (p6);
       		\end{tikzpicture} &  \tikzstyle{vertexL}=[circle,draw, minimum size=8pt, scale=0.6, inner sep=0.5pt]
       		\tikzstyle{vertexS}=[circle,draw, minimum size=8pt, scale=0.45, inner sep=0.5pt]
       		\begin{tikzpicture}[scale=0.2]
       			\node at (1,2) {  };
       			\node (p1) at (1,1) [vertexL]{$p_1$};
       			\node (p2) at (4,1) [vertexL]{$p_2$}; 
       			\node (p3) at (7,1) [vertexL]{$p_3$}; 
       			\node (p4) at (10,1) [vertexL]{$p_4$}; 
       			\node (p5) at (13,1) [vertexL]{$p_5$};
       			\node (p6) at (16,1) [vertexL]{$p_6$};
       			\node (p7) at (19,1) [vertexL]{$p_7$};
       			\node (p8) at (22,1) [vertexL]{$p_8$};
       			\node (p9) at (25,1) [vertexL]{$p_9$};
       			\node (p10) at (28,1) [vertexS]{$p_{10}$}; 
       			\node (p11) at (31,1) [vertexS]{$p_{11}$};
       			\draw[line width=0.04cm] (p1) -- (p2);
       		    \draw[line width=0.04cm] (p3) -- (p4);
       			\draw[line width=0.04cm] (p4) -- (p5);
       			\draw[line width=0.04cm] (p5) -- (p6);
       		   \draw[line width=0.04cm] (p7) -- (p8);
       			\draw[line width=0.04cm] (p8) -- (p9);
       			\draw[line width=0.04cm] (p9) -- (p10);
       		\end{tikzpicture} \\ \hline
       		$1/8$ & \tikzstyle{vertexL}=[circle,draw, minimum size=8pt, scale=0.6, inner sep=0.5pt]
       		\begin{tikzpicture}[scale=0.2]
       			\node at (1,2) {  };
       			\node (p1) at (1,1) [vertexL]{$p_1$};
       			\node (p2) at (4,1) [vertexL]{$p_2$};
       			\node (p3) at (7,1) [vertexL]{$p_3$};
       			\node (p4) at (10,1) [vertexL]{$p_4$};
       			\node (p5) at (13,1) [vertexL]{$p_5$};
       			\node (p6) at (16,1) [vertexL]{$p_6$};
       			\node (p7) at (19,1) [vertexL]{$p_7$};
       			\draw[line width=0.04cm] (p2) -- (p3);
       			\draw[line width=0.04cm] (p4) -- (p5);
       			\draw[line width=0.04cm] (p5) -- (p6);
       			\draw[line width=0.04cm] (p6) -- (p7);
       		\end{tikzpicture} &  \tikzstyle{vertexL}=[circle,draw, minimum size=8pt, scale=0.6, inner sep=0.5pt]
       		\tikzstyle{vertexS}=[circle,draw, minimum size=8pt, scale=0.45, inner sep=0.5pt]
       		\begin{tikzpicture}[scale=0.2]
       			\node at (1,2) {  };
       			\node (p1) at (1,1) [vertexL]{$p_1$};
       			\node (p2) at (4,1) [vertexL]{$p_2$};
       			\node (p3) at (7,1) [vertexL]{$p_3$};
       			\node (p4) at (10,1) [vertexL]{$p_4$}; 
       			\node (p5) at (13,1) [vertexL]{$p_5$};
       			\node (p6) at (16,1) [vertexL]{$p_6$};
       			\node (p7) at (19,1) [vertexL]{$p_7$};
       			\node (p8) at (22,1) [vertexL]{$p_8$};
       			\node (p9) at (25,1) [vertexL]{$p_9$};
       			\node (p10) at (28,1) [vertexS]{$p_{10}$}; 
       			\node (p11) at (31,1) [vertexS]{$p_{11}$};
       			\draw[line width=0.04cm] (p2) -- (p3);
       			\draw[line width=0.04cm] (p4) -- (p5);
       			\draw[line width=0.04cm] (p5) -- (p6);
       			\draw[line width=0.04cm] (p6) -- (p7);
       			\draw[line width=0.04cm] (p8) -- (p9);
       			\draw[line width=0.04cm] (p9) -- (p10);
       			\draw[line width=0.04cm] (p10) -- (p11);
       		\end{tikzpicture} \\ \hline
       		$1/8$ & \tikzstyle{vertexL}=[circle,draw, minimum size=8pt, scale=0.6, inner sep=0.5pt]
       		\begin{tikzpicture}[scale=0.2]
       			\node at (1,2) {  };
       			\node (p1) at (1,1) [vertexL]{$p_1$};
       			\node (p2) at (4,1) [vertexL]{$p_2$};
       			\node (p3) at (7,1) [vertexL]{$p_3$};
       			\node (p4) at (10,1) [vertexL]{$p_4$};
       			\node (p5) at (13,1) [vertexL]{$p_5$};
       			\node (p6) at (16,1) [vertexL]{$p_6$};
       			\node (p7) at (19,1) [vertexL]{$p_7$};
       			\draw[line width=0.04cm] (p1) -- (p2);
       			\draw[line width=0.04cm] (p4) -- (p5);
       			\draw[line width=0.04cm] (p5) -- (p6);
       			 \draw[line width=0.04cm] (p6) -- (p7);
       		\end{tikzpicture} &  \tikzstyle{vertexL}=[circle,draw, minimum size=8pt, scale=0.6, inner sep=0.5pt]
       		\tikzstyle{vertexS}=[circle,draw, minimum size=8pt, scale=0.45, inner sep=0.5pt]
       		\begin{tikzpicture}[scale=0.2]
       			\node at (1,2) {  };
       			\node (p1) at (1,1) [vertexL]{$p_1$};
       			\node (p2) at (4,1) [vertexL]{$p_2$};
       			\node (p3) at (7,1) [vertexL]{$p_3$};
       			\node (p4) at (10,1) [vertexL]{$p_4$}; 
       			\node (p5) at (13,1) [vertexL]{$p_5$};
       			\node (p6) at (16,1) [vertexL]{$p_6$};
       			\node (p7) at (19,1) [vertexL]{$p_7$};
       			\node (p8) at (22,1) [vertexL]{$p_8$};
       			\node (p9) at (25,1) [vertexL]{$p_9$};
       			\node (p10) at (28,1) [vertexS]{$p_{10}$}; 
       			\node (p11) at (31,1) [vertexS]{$p_{11}$};
       			\draw[line width=0.04cm] (p1) -- (p2);
       			\draw[line width=0.04cm] (p4) -- (p5);
       			\draw[line width=0.04cm] (p5) -- (p6);
       			 \draw[line width=0.04cm] (p6) -- (p7);
       			\draw[line width=0.04cm] (p8) -- (p9);
       			\draw[line width=0.04cm] (p9) -- (p10);
       			 \draw[line width=0.04cm] (p10) -- (p11);
       		\end{tikzpicture} \\ \hline
       		$1/8$ & \tikzstyle{vertexL}=[circle,draw, minimum size=8pt, scale=0.6, inner sep=0.5pt]
       		\begin{tikzpicture}[scale=0.2]
       			\node at (1,2) {  };
       			\node (p1) at (1,1) [vertexL]{$p_1$};
       			\node (p2) at (4,1) [vertexL]{$p_2$};
       			\node (p3) at (7,1) [vertexL]{$p_3$};
       			\node (p4) at (10,1) [vertexL]{$p_4$};
       			\node (p5) at (13,1) [vertexL]{$p_5$};
       			\node (p6) at (16,1) [vertexL]{$p_6$};
       			\node (p7) at (19,1) [vertexL]{$p_7$};
       			\draw[line width=0.04cm] (p2) -- (p3);
       			\draw[line width=0.04cm] (p3) -- (p4);
       			\draw[line width=0.04cm] (p4) -- (p5);
       		\end{tikzpicture} &  \tikzstyle{vertexL}=[circle,draw, minimum size=8pt, scale=0.6, inner sep=0.5pt]
       		\tikzstyle{vertexS}=[circle,draw, minimum size=8pt, scale=0.45, inner sep=0.5pt]
       		\begin{tikzpicture}[scale=0.2]
       			\node at (1,2) {  };
       			\node (p1) at (1,1) [vertexL]{$p_1$};
       			\node (p2) at (4,1) [vertexL]{$p_2$};
       			\node (p3) at (7,1) [vertexL]{$p_3$};
       			\node (p4) at (10,1) [vertexL]{$p_4$}; 
       			\node (p5) at (13,1) [vertexL]{$p_5$};
       			\node (p6) at (16,1) [vertexL]{$p_6$};
       			\node (p7) at (19,1) [vertexL]{$p_7$};
       			\node (p8) at (22,1) [vertexL]{$p_8$};
       			\node (p9) at (25,1) [vertexL]{$p_9$};
       			\node (p10) at (28,1) [vertexS]{$p_{10}$}; 
       			\node (p11) at (31,1) [vertexS]{$p_{11}$};
       			\draw[line width=0.04cm] (p2) -- (p3);
       			\draw[line width=0.04cm] (p3) -- (p4);
       			\draw[line width=0.04cm] (p4) -- (p5);
       			\draw[line width=0.04cm] (p6) -- (p7);
       			\draw[line width=0.04cm] (p7) -- (p8);
       			\draw[line width=0.04cm] (p8) -- (p9);
       		\end{tikzpicture} \\ \hline
       		$1/8$ & \tikzstyle{vertexL}=[circle,draw, minimum size=8pt, scale=0.6, inner sep=0.5pt]
       		\begin{tikzpicture}[scale=0.2]
       			\node at (1,2) {  };
       			\node (p1) at (1,1) [vertexL]{$p_1$};\node at (1,-0.4) {{\tiny A}};
       			\node (p2) at (4,1) [vertexL]{$p_2$};
       			\node (p3) at (7,1) [vertexL]{$p_3$};
       			\node (p4) at (10,1) [vertexL]{$p_4$};
       			\node (p5) at (13,1) [vertexL]{$p_5$};\node at (13,-0.4) {{\tiny B}};
       			\node (p6) at (16,1) [vertexL]{$p_6$};\node at (16,-0.4) {{\tiny A}};
       			\node (p7) at (19,1) [vertexL]{$p_7$};\node at (19,-0.4) {{\tiny B}};
       			  \draw[line width=0.04cm] (p2) -- (p3);
       			  \draw[line width=0.04cm] (p5) -- (p6);
       			  \draw[line width=0.04cm] (p6) -- (p7);
       		\end{tikzpicture} &  \tikzstyle{vertexL}=[circle,draw, minimum size=8pt, scale=0.6, inner sep=0.5pt]
       		\tikzstyle{vertexS}=[circle,draw, minimum size=8pt, scale=0.45, inner sep=0.5pt]
       		\begin{tikzpicture}[scale=0.2]
       			\node at (1,2) {  };
       			\node (p1) at (1,1) [vertexL]{$p_1$}; \node at (1,-0.4) {{\tiny A}};
       			\node (p2) at (4,1) [vertexL]{$p_2$};
       			\node (p3) at (7,1) [vertexL]{$p_3$};
       			\node (p4) at (10,1) [vertexL]{$p_4$}; 
       			\node (p5) at (13,1) [vertexL]{$p_5$};
       			\node (p6) at (16,1) [vertexL]{$p_6$};
       			\node (p7) at (19,1) [vertexL]{$p_7$};
       			\node (p8) at (22,1) [vertexL]{$p_8$};
       			\node (p9) at (25,1) [vertexL]{$p_9$}; \node at (25,-0.4) {{\tiny B}};
       			\node (p10) at (28,1) [vertexS]{$p_{10}$}; \node at (28,-0.4) {{\tiny A}};
       			\node (p11) at (31,1) [vertexS]{$p_{11}$}; \node at (31,-0.4) {{\tiny B}};
       			  \draw[line width=0.04cm] (p2) -- (p3);
       			  \draw[line width=0.04cm] (p9) -- (p10);
       			  \draw[line width=0.04cm] (p10) -- (p11);
       		\end{tikzpicture} \\ \hline
       		
       	\end{tabular}
       \end{center}

       We now consider the case when $p_1 p_{n-1} \in E(H)$.  In this case we pick the bipartite subgraphs induced by the following sets (with the given probabilities).
       \begin{itemize}
       	\item Use $\{p_1,p_2,p_3,p_4\}, \{p_5,p_6,p_7,p_8\}, \ldots , \{p_{n-6}, p_{n-5},p_{n-4},p_{n-3}\}, \{p_{n-1},p_{n}\}$ with probability $2/8$.
       	\item Use $ \{p_1,p_2\}, \{p_3,p_4,p_5,p_6\}, \{p_7,p_8,p_{9},p_{10}\}, \ldots , \{p_{n-4},p_{n-3},p_{n-2},p_{n-1}\}$ with probability $2/8$.
       	\item Use $ \{p_2,p_3\}, \{p_4,p_5,p_6,p_7\}, \{p_8,p_9,p_{10},p_{11}\}, \ldots , \{p_{n-3},p_{n-2},p_{n-1},p_{n}\}$ with probability $1/8$.
       	
       	\item Use $ \{p_1,p_2\}, \{p_4,p_5,p_6,p_7\}, \{p_8,p_9,p_{10},p_{11}\}, \ldots , \{p_{n-3},p_{n-2},p_{n-1},p_{n}\}$ with probability $1/8$.
       	
       	\item Use $\{p_2,p_3,p_4,p_5\}, \{p_6,p_7,p_8,p_{9}\}, \ldots , \{p_{n-5},p_{n-4},p_{n-3},p_{n-2}\}, \{p_{n-1},p_n\}$  with probability $1/8$.
       	\item Use $ \{p_2,p_3\}$,$\{p_{n-2},p_{n-1}\}$ with probability $1/8$.
       	
       \end{itemize}
           Now, every edge of $P$ is in $B_P$ with probability at least $5/8$ and every vertex except $p_2$ and $p_{n-1}$ is not in $V(B_P)$ with probability at least $1/8$ (and $p_2$ \AY{and $p_{n-1}$ is incident with the edges $p_2 p_n$ and $p_1 p_{n-1}$, respectively).}
           We illustrate the above probability space for $n=7$  and $n=11$ below.
       
       \begin{center}
       	\begin{tabular}{|c|c|c|} \hline
       		$Prob$ & Subgraph $n=7$ & Subgraph $n=11$ \\ \hline \hline
       		$2/8$ & \tikzstyle{vertexL}=[circle,draw, minimum size=8pt, scale=0.6, inner sep=0.5pt]
       		\begin{tikzpicture}[scale=0.2]
       			\node at (1,2) {  };
       			\node (p1) at (1,1) [vertexL]{$p_1$}; 
       			\node (p2) at (4,1) [vertexL]{$p_2$};
       			\node (p3) at (7,1) [vertexL]{$p_3$};
       			\node (p4) at (10,1) [vertexL]{$p_4$}; 
       			\node (p5) at (13,1) [vertexL]{$p_5$}; 
       			\node (p6) at (16,1) [vertexL]{$p_6$}; 
       			\node (p7) at (19,1) [vertexL]{$p_7$};
       			\draw[line width=0.04cm] (p1) -- (p2);
       			\draw[line width=0.04cm] (p2) -- (p3);
       			\draw[line width=0.04cm] (p3) -- (p4);
       			\draw[line width=0.04cm] (p6) -- (p7);
       		\end{tikzpicture} &  \tikzstyle{vertexL}=[circle,draw, minimum size=8pt, scale=0.6, inner sep=0.5pt]
       		\tikzstyle{vertexS}=[circle,draw, minimum size=8pt, scale=0.45, inner sep=0.5pt]
       		\begin{tikzpicture}[scale=0.2]
       			\node at (1,2) {  };
       			\node (p1) at (1,1) [vertexL]{$p_1$};
       			\node (p2) at (4,1) [vertexL]{$p_2$};
       			\node (p3) at (7,1) [vertexL]{$p_3$};
       			\node (p4) at (10,1) [vertexL]{$p_4$}; 
       			\node (p5) at (13,1) [vertexL]{$p_5$};
       			\node (p6) at (16,1) [vertexL]{$p_6$};
       			\node (p7) at (19,1) [vertexL]{$p_7$};
       			\node (p8) at (22,1) [vertexL]{$p_8$};
       			\node (p9) at (25,1) [vertexL]{$p_9$}; 
       			\node (p10) at (28,1) [vertexS]{$p_{10}$}; 
       			\node (p11) at (31,1) [vertexS]{$p_{11}$};
       			\draw[line width=0.04cm] (p1) -- (p2);
       			\draw[line width=0.04cm] (p2) -- (p3);
       			\draw[line width=0.04cm] (p3) -- (p4);
       			\draw[line width=0.04cm] (p5) -- (p6);
       			\draw[line width=0.04cm] (p6) -- (p7);
       			\draw[line width=0.04cm] (p7) -- (p8);
       			\draw[line width=0.04cm] (p10) -- (p11);
       		\end{tikzpicture} \\ \hline
       		$2/8$ & \tikzstyle{vertexL}=[circle,draw, minimum size=8pt, scale=0.6, inner sep=0.5pt]
       		\begin{tikzpicture}[scale=0.2]
       			\node at (1,2) {  };
       			\node (p1) at (1,1) [vertexL]{$p_1$}; 
       			\node (p2) at (4,1) [vertexL]{$p_2$};
       			\node (p3) at (7,1) [vertexL]{$p_3$}; 
       			\node (p4) at (10,1) [vertexL]{$p_4$};
       			\node (p5) at (13,1) [vertexL]{$p_5$};
       			\node (p6) at (16,1) [vertexL]{$p_6$};
       			\node (p7) at (19,1) [vertexL]{$p_7$};
       			\draw[line width=0.04cm] (p1) -- (p2);
       			\draw[line width=0.04cm] (p3) -- (p4);
       			\draw[line width=0.04cm] (p4) -- (p5);
       			\draw[line width=0.04cm] (p5) -- (p6);
       		\end{tikzpicture} &  \tikzstyle{vertexL}=[circle,draw, minimum size=8pt, scale=0.6, inner sep=0.5pt]
       		\tikzstyle{vertexS}=[circle,draw, minimum size=8pt, scale=0.45, inner sep=0.5pt]
       		\begin{tikzpicture}[scale=0.2]
       			\node at (1,2) {  };
       			\node (p1) at (1,1) [vertexL]{$p_1$};
       			\node (p2) at (4,1) [vertexL]{$p_2$}; 
       			\node (p3) at (7,1) [vertexL]{$p_3$}; 
       			\node (p4) at (10,1) [vertexL]{$p_4$}; 
       			\node (p5) at (13,1) [vertexL]{$p_5$};
       			\node (p6) at (16,1) [vertexL]{$p_6$};
       			\node (p7) at (19,1) [vertexL]{$p_7$};
       			\node (p8) at (22,1) [vertexL]{$p_8$};
       			\node (p9) at (25,1) [vertexL]{$p_9$};
       			\node (p10) at (28,1) [vertexS]{$p_{10}$}; 
       			\node (p11) at (31,1) [vertexS]{$p_{11}$};
       			\draw[line width=0.04cm] (p1) -- (p2);
       			\draw[line width=0.04cm] (p3) -- (p4);
       			\draw[line width=0.04cm] (p4) -- (p5);
       			\draw[line width=0.04cm] (p5) -- (p6);
       			\draw[line width=0.04cm] (p7) -- (p8);
       			\draw[line width=0.04cm] (p8) -- (p9);
       			\draw[line width=0.04cm] (p9) -- (p10);
       		\end{tikzpicture} \\ \hline
       		$1/8$ & \tikzstyle{vertexL}=[circle,draw, minimum size=8pt, scale=0.6, inner sep=0.5pt]
       		\begin{tikzpicture}[scale=0.2]
       			\node at (1,2) {  };
       			\node (p1) at (1,1) [vertexL]{$p_1$};
       			\node (p2) at (4,1) [vertexL]{$p_2$};
       			\node (p3) at (7,1) [vertexL]{$p_3$};
       			\node (p4) at (10,1) [vertexL]{$p_4$};
       			\node (p5) at (13,1) [vertexL]{$p_5$};
       			\node (p6) at (16,1) [vertexL]{$p_6$};
       			\node (p7) at (19,1) [vertexL]{$p_7$};
       			\draw[line width=0.04cm] (p2) -- (p3);
       			\draw[line width=0.04cm] (p4) -- (p5);
       			\draw[line width=0.04cm] (p5) -- (p6);
       			\draw[line width=0.04cm] (p6) -- (p7);
       		\end{tikzpicture} &  \tikzstyle{vertexL}=[circle,draw, minimum size=8pt, scale=0.6, inner sep=0.5pt]
       		\tikzstyle{vertexS}=[circle,draw, minimum size=8pt, scale=0.45, inner sep=0.5pt]
       		\begin{tikzpicture}[scale=0.2]
       			\node at (1,2) {  };
       			\node (p1) at (1,1) [vertexL]{$p_1$};
       			\node (p2) at (4,1) [vertexL]{$p_2$};
       			\node (p3) at (7,1) [vertexL]{$p_3$};
       			\node (p4) at (10,1) [vertexL]{$p_4$}; 
       			\node (p5) at (13,1) [vertexL]{$p_5$};
       			\node (p6) at (16,1) [vertexL]{$p_6$};
       			\node (p7) at (19,1) [vertexL]{$p_7$};
       			\node (p8) at (22,1) [vertexL]{$p_8$};
       			\node (p9) at (25,1) [vertexL]{$p_9$};
       			\node (p10) at (28,1) [vertexS]{$p_{10}$}; 
       			\node (p11) at (31,1) [vertexS]{$p_{11}$};
       			\draw[line width=0.04cm] (p2) -- (p3);
       			\draw[line width=0.04cm] (p4) -- (p5);
       			\draw[line width=0.04cm] (p5) -- (p6);
       			\draw[line width=0.04cm] (p6) -- (p7);
       			\draw[line width=0.04cm] (p8) -- (p9);
       			\draw[line width=0.04cm] (p9) -- (p10);
       			\draw[line width=0.04cm] (p10) -- (p11);
       		\end{tikzpicture} \\ \hline
       		$1/8$ & \tikzstyle{vertexL}=[circle,draw, minimum size=8pt, scale=0.6, inner sep=0.5pt]
       		\begin{tikzpicture}[scale=0.2]
       			\node at (1,2) {  };
       			\node (p1) at (1,1) [vertexL]{$p_1$};
       			\node (p2) at (4,1) [vertexL]{$p_2$};
       			\node (p3) at (7,1) [vertexL]{$p_3$};
       			\node (p4) at (10,1) [vertexL]{$p_4$};
       			\node (p5) at (13,1) [vertexL]{$p_5$};
       			\node (p6) at (16,1) [vertexL]{$p_6$};
       			\node (p7) at (19,1) [vertexL]{$p_7$};
       			\draw[line width=0.04cm] (p1) -- (p2);
       			\draw[line width=0.04cm] (p4) -- (p5);
       			\draw[line width=0.04cm] (p5) -- (p6);
       			\draw[line width=0.04cm] (p6) -- (p7);
       		\end{tikzpicture} &  \tikzstyle{vertexL}=[circle,draw, minimum size=8pt, scale=0.6, inner sep=0.5pt]
       		\tikzstyle{vertexS}=[circle,draw, minimum size=8pt, scale=0.45, inner sep=0.5pt]
       		\begin{tikzpicture}[scale=0.2]
       			\node at (1,2) {  };
       			\node (p1) at (1,1) [vertexL]{$p_1$};
       			\node (p2) at (4,1) [vertexL]{$p_2$};
       			\node (p3) at (7,1) [vertexL]{$p_3$};
       			\node (p4) at (10,1) [vertexL]{$p_4$}; 
       			\node (p5) at (13,1) [vertexL]{$p_5$};
       			\node (p6) at (16,1) [vertexL]{$p_6$};
       			\node (p7) at (19,1) [vertexL]{$p_7$};
       			\node (p8) at (22,1) [vertexL]{$p_8$};
       			\node (p9) at (25,1) [vertexL]{$p_9$};
       			\node (p10) at (28,1) [vertexS]{$p_{10}$}; 
       			\node (p11) at (31,1) [vertexS]{$p_{11}$};
       			\draw[line width=0.04cm] (p1) -- (p2);
       			\draw[line width=0.04cm] (p4) -- (p5);
       			\draw[line width=0.04cm] (p5) -- (p6);
       			\draw[line width=0.04cm] (p6) -- (p7);
       			\draw[line width=0.04cm] (p8) -- (p9);
       			\draw[line width=0.04cm] (p9) -- (p10);
       			\draw[line width=0.04cm] (p10) -- (p11);
       		\end{tikzpicture} \\ \hline
       		$1/8$ & \tikzstyle{vertexL}=[circle,draw, minimum size=8pt, scale=0.6, inner sep=0.5pt]
       		\begin{tikzpicture}[scale=0.2]
       			\node at (1,2) {  };
       			\node (p1) at (1,1) [vertexL]{$p_1$};
       			\node (p2) at (4,1) [vertexL]{$p_2$};
       			\node (p3) at (7,1) [vertexL]{$p_3$};
       			\node (p4) at (10,1) [vertexL]{$p_4$};
       			\node (p5) at (13,1) [vertexL]{$p_5$};
       			\node (p6) at (16,1) [vertexL]{$p_6$};
       			\node (p7) at (19,1) [vertexL]{$p_7$};
       			\draw[line width=0.04cm] (p2) -- (p3);
       			\draw[line width=0.04cm] (p3) -- (p4);
       			\draw[line width=0.04cm] (p4) -- (p5);
       			\draw[line width=0.04cm] (p6) -- (p7);
       		\end{tikzpicture} &  \tikzstyle{vertexL}=[circle,draw, minimum size=8pt, scale=0.6, inner sep=0.5pt]
       		\tikzstyle{vertexS}=[circle,draw, minimum size=8pt, scale=0.45, inner sep=0.5pt]
       		\begin{tikzpicture}[scale=0.2]
       			\node at (1,2) {  };
       			\node (p1) at (1,1) [vertexL]{$p_1$};
       			\node (p2) at (4,1) [vertexL]{$p_2$};
       			\node (p3) at (7,1) [vertexL]{$p_3$};
       			\node (p4) at (10,1) [vertexL]{$p_4$}; 
       			\node (p5) at (13,1) [vertexL]{$p_5$};
       			\node (p6) at (16,1) [vertexL]{$p_6$};
       			\node (p7) at (19,1) [vertexL]{$p_7$};
       			\node (p8) at (22,1) [vertexL]{$p_8$};
       			\node (p9) at (25,1) [vertexL]{$p_9$};
       			\node (p10) at (28,1) [vertexS]{$p_{10}$}; 
       			\node (p11) at (31,1) [vertexS]{$p_{11}$};
       			\draw[line width=0.04cm] (p2) -- (p3);
       			\draw[line width=0.04cm] (p3) -- (p4);
       			\draw[line width=0.04cm] (p4) -- (p5);
       			\draw[line width=0.04cm] (p6) -- (p7);
       			\draw[line width=0.04cm] (p7) -- (p8);
       			\draw[line width=0.04cm] (p8) -- (p9);
       			\draw[line width=0.04cm] (p10) -- (p11);
       		\end{tikzpicture} \\ \hline
       		$1/8$ & \tikzstyle{vertexL}=[circle,draw, minimum size=8pt, scale=0.6, inner sep=0.5pt]
       		\begin{tikzpicture}[scale=0.2]
       			\node at (1,2) {  };
       			\node (p1) at (1,1) [vertexL]{$p_1$};
       			\node (p2) at (4,1) [vertexL]{$p_2$};
       			\node (p3) at (7,1) [vertexL]{$p_3$};
       			\node (p4) at (10,1) [vertexL]{$p_4$};
       			\node (p5) at (13,1) [vertexL]{$p_5$};
       			\node (p6) at (16,1) [vertexL]{$p_6$};
       			\node (p7) at (19,1) [vertexL]{$p_7$};
       			\draw[line width=0.04cm] (p2) -- (p3);
       			\draw[line width=0.04cm] (p5) -- (p6);
       		\end{tikzpicture} &  \tikzstyle{vertexL}=[circle,draw, minimum size=8pt, scale=0.6, inner sep=0.5pt]
       		\tikzstyle{vertexS}=[circle,draw, minimum size=8pt, scale=0.45, inner sep=0.5pt]
       		\begin{tikzpicture}[scale=0.2]
       			\node at (1,2) {  };
       			\node (p1) at (1,1) [vertexL]{$p_1$}; 
       			\node (p2) at (4,1) [vertexL]{$p_2$};
       			\node (p3) at (7,1) [vertexL]{$p_3$};
       			\node (p4) at (10,1) [vertexL]{$p_4$}; 
       			\node (p5) at (13,1) [vertexL]{$p_5$};
       			\node (p6) at (16,1) [vertexL]{$p_6$};
       			\node (p7) at (19,1) [vertexL]{$p_7$};
       			\node (p8) at (22,1) [vertexL]{$p_8$};
       			\node (p9) at (25,1) [vertexL]{$p_9$};
       			\node (p10) at (28,1) [vertexS]{$p_{10}$}; 
       			\node (p11) at (31,1) [vertexS]{$p_{11}$};
       			\draw[line width=0.04cm] (p2) -- (p3);
       			\draw[line width=0.04cm] (p9) -- (p10);
       		\end{tikzpicture} \\ \hline

       	\end{tabular}
       \end{center}\smallQED
       
\vspace{2mm}
       
Recall that we have assumed that $|V(G)|$ is even. Let $B$ be the union of the bipartite subgraphs taken randomly and independently for each cycle and the path from the probability space in Claims \ref{cl:neq5711}-\ref{cl:pathcase}. Note that every edge of the $5$-cycles in $H-M$ is included with probability at least $3/5$ and every other edge of $H-M$ is included with probability at least $5/8$. Let $M=M_{55}' \cup M_5' \cup M'$, where $M_{55}'$ are the edges in $M$ where both end-points lie in $5$-cycles in $H-M$ and $M_{5}'$ are the edges in $M$ where exactly one end-point lies in $5$-cycles in $H-M$ and $M'=M - (M_{55}' \cup M_5')$. Let $C_5'$ be the family of all $5$-cycles in $H-M$.
       
Now, let $B$ include edges $e\in M_{55}'\cup M_{5}'$ if both endpoints of $e$ are not in $B$ at the moment. By Claims \ref{cl:neq5711}-\ref{cl:pathcase}, the endpoints of every edge in $M_{55}'$ are not in $B$ with probability $1/25$ and the endpoints of every edge in $M_{5}'$ are not in $B$ with probability at least $1/40$. Observe that $B$ is still in $\mathcal{B}_b(H)$. Therefore, the following holds. 
       
       \begin{equation}\label{eq:eB}
       \mathbb{E}(w(B)) \geq \frac{3 w(C_5')}{5} + \frac{5 (w(H-M)-w(C_5'))}{8} + \frac{w(M_{55}')}{25} + \frac{w(M_5')}{40}.
       \end{equation} 
       
      Let $B'$ be the union of the bipartite subgraphs taken randomly and independently from the 5-cycles from the probability space in Claim \ref{cl:c5}. Let $B'$ include edges $e\in M$ if both endpoints of $e$ are not in $B'$ at the moment. Since $B'$ only includes vertices on 5-cycles, every edge in $M_{55}'$ can be added to $B'$ with probability at least $1/25$. In addition, every edge in $M_5'$ can be added with probability at least $1/5$, and all edges in $M'$ can be added with probability $1$. Thus, the following holds. 
       
       \begin{equation}\label{eq:eB'}
       \mathbb{E}(w(B')) \geq \frac{3 w(C_5')}{5} + \frac{w(M_{55}')}{25} + \frac{w(M_5')}{5} + w(M').
       \end{equation}
       Then, let $B^*$ be the \AY{bipartite} subgraph obtained by taking $B^*=B$ with probability $24/25$ and $B^*=B'$ with probability $1/25$. Thus, the following holds.
       
\AY{
       \begin{equation}\label{eq:eB*}
\begin{array}{rcl} 
      \mathbb{E}(w(B^*)) & \geq & \frac{24}{25} \left( \frac{3 w(C_5')}{5} + \frac{5 (w(H-M)-w(C_5'))}{8} + \frac{w(M_{55}')}{25} + \frac{w(M_5')}{40} \right) \\ \vspace{0.1cm}
& & + \frac{1}{25} \left( \frac{3 w(C_5')}{5} + \frac{w(M_{55}')}{25} + \frac{w(M_5')}{5} + w(M')   \right) \\ \vspace{0.1cm}
 & = & \frac{3 w(C_5')}{5} + \frac{3 (w(H-M)-w(C_5'))}{5} + \frac{w(M_{55}')}{25} + \frac{4w(M_5')}{125} + \frac{w(M')}{25} \\
 & \geq &  \frac{3(w(H)-w(M))}{5}+\frac{4w(M)}{125}. \\
\end{array}
       \end{equation}
}


              Let $mb(H)$ denote the weight of a maximum bisection in $H$. By Lemma \ref{lem:bbsg}, (\ref{eq:eB*}) implies the following:
       
       \[
       \begin{array}{rcl}
       	mb(H) & \geq & \frac{1}{2} \left( w(H) + \frac{3 (w(H)-w(M))}{5} + \frac{4w(M)}{125} \right) \\
       	& = & \frac{4}{5} w(H) - \frac{71 w(M)}{250}. \\
       \end{array}
       \]
       
       By (\ref{eq:1}), we have that $mb(H) \geq \frac{3}{5} w(H) + \frac{2}{5} w(M) = \frac{3}{5} w(H) + \frac{100}{250} w(M)$. Merging these results give us the following:
       
       \[
       \begin{array}{rcl}
       	mb(H) & \geq & \frac{100}{171} \left( \frac{4}{5} w(H) - \frac{71 w(M)}{250} \right) 
       	+ \frac{71}{171} \left( \frac{3}{5} w(H) + \frac{100}{250} w(M) \right) \\
       	& = & \frac{400 + 213}{5 \times 171} w(H) \\
       	& = & \frac{613}{855} w(H) \approx 0.716959 w(H), \\
       \end{array}     
       \]
        which completes the proof for the even case.
       
Now we consider the case when $|V(G)|$ is odd. We only need to consider the case when $|V_2(G)|=1$. Indeed, if $|V_2(G)|\geq 2$, then $|V_2(G)|\geq 3$ and therefore two of the vertices in $V_2(G)$ are not adjacent to each other (as $G$ is triangle-free). We can add a new vertex $x$ and two edges with weight 0 between $x$ and the two nonadjacent vertices in $V_2(G)$. The new graph is bridgeless with an even number of vertices and therefore has a bisection $(X, Y)$ with the desired weight. Then $(X\setminus\{x\},Y\setminus\{x\})$ is a desired bisection for $G$. Thus, we assume that $V_2(G)=\{y\}$. Since $H$ is triangle-free the two neighbours $y_1$ and $y_2$ of $y$ are not adjacent. Let us obtain a new graph $H'$ by adding $y_1y_2$ with weight 0 and deleting $y$. Note that $H'$ is bridgeless cubic and therefore by Lemma \ref{lem:mt-gvpt}, there is a matching $M_1$ in $H'$ not containing $y_1y_2$. Thus, $M_1$ is also a matching in $G$ that includes all vertices in $G$ but $y$. Now, we add a new vertex $x$ and a new edge $xy$ of weight 0 to $G$ and denote the new graph by $H''$. Observe that $M=M_1\cup \{xy\}$ is a perfect matching of $H''$ and $H''-M$ is a disjoint union of cycles and one isolated vertex $x$. Note that a bisection $(X,Y)$ of $H''$ and the bisection $(X\setminus\{x\},Y\setminus\{x\})$ of $G$ are of the same weight. Thus, we only need to show a desired bound for the bisection of $H''$. 

The argument is similar to the above one for 2-edge-connected graphs. We first use Lemma \ref{lem:triangle-free cubic} to show that $mb(H'')\geq \frac{3}{5}w(H'')+\frac{2}{5}w(M)$ (recall that Lemma \ref{lem:triangle-free cubic} does not require the graph to be 2-edge-connected). Now we only need to construct a random bipartite subgraph $B^*$ with $\mathbb{E}(w(B^*)) \geq \frac{3(w(H'')-w(M))}{5}+\frac{4w(M)}{125}$. Then, again by Lemma \ref{lem:bbsg}, $mb(H'')\geq  \frac{4}{5} w(H'') - \frac{71 w(M)}{250}$. Thus, a convex combination of this lower bound and $mb(H'')\geq \frac{3}{5}w(H'')+\frac{2}{5}w(M)$ implies our result. In the following, we illustrate how to use a similar argument to construct the desired random bipartite subgraph $B^*$ with $\mathbb{E}(w(B^*)) \geq \frac{3(w(H'')-w(M))}{5}+\frac{4w(M)}{125}$.  Let $M'_{55}$, $M'_5$, $M'$, and $C'_5$ have the same definition as above but for this new graph $H''$ and the new matching $M$. 

\begin{itemize}
	\item  Let $B$ be the union of the bipartite subgraphs taken randomly and independently for each cycle in $H''-M$ from the probability space in Claims \ref{cl:neq5711}-\ref{cl:c11}. (Note that $H''-M$ is a collection of cycles and one isolated vertex $x$ so we don't need to use Claim \ref{cl:pathcase}.) Let $B$ include edges $e\in M_{55}'\cup M_{5}'$ if both endpoints of $e$ are not in $B$ at the moment. By a discussion similar to that for (\ref{eq:eB}), we have that $\mathbb{E}(w(B)) \geq \frac{3 w(C_5')}{5} + \frac{5 (w(H''-M)-w(C_5'))}{8} + \frac{w(M_{55}')}{25} + \frac{w(M_5')}{40}$. 
	
	\item Let $B'$ be the union of the bipartite subgraphs taken randomly and independently for each 5-cycle in $H''-M$ from the probability space in Claim \ref{cl:c5}. Let $B'$ include edges $e\in M$ if both endpoints of $e$ are not in $B'$ at the moment. By a discussion similar  to that for (\ref{eq:eB'}), we have that $ \mathbb{E}(w(B')) \geq \frac{3 w(C_5')}{5} + \frac{w(M_{55}')}{25} + \frac{w(M_5')}{5} + w(M') $. 
	
	\item Let $B^*$ be the \AY{bipartite} subgraph obtained by taking $B^*=B$ with probability $24/25$ and $B^*=B'$ with probability $1/25$. And we have $\mathbb{E}(w(B^*)) \geq \frac{3(w(H'')-w(M))}{5}+\frac{4w(M)}{125}$ as desired.
 
\end{itemize}
 
This completes the proof.
\end{proof}

\begin{remark1}
The method for 2-edge-connected triangle-free subcubic graphs cannot be used directly for triangle-free subcubic graphs with bridges in general because the graph may not have a perfect matching and therefore we cannot apply Lemma \ref{lem:triangle-free cubic} to get the first lower bound. Thus, we need more efforts to extend this result to all cases but $K_{1,3}$.
\end{remark1}
The following remark observed from the proof of the above theorem is useful for extending the result to all triangle-free subcubic graphs but $K_{1,3}$. 
\begin{observation} \label{obsX1}
By $(\ref{eq:eB*})$, we note that if $M$ is a matching $($not necessarily perfect since the second bound of the above theorem did not use this fact$)$ of a bridgeless triangle-free subcubic graph $H$ and $H-M$ is a collection of cycles, then we can also use the probability space of bipartite subgraphs $B^*\in \mathcal{B}_b(G)$ defined in the proof of the above theorem $($obtained by taking $B$ with probability $\frac{24}{25}$ and $B'$ with probability $\frac{1}{25}$$)$ of $H-M$, where $\mathbb{E}(w(B^*))\geq\frac{3 (w(H)-w(M))}{5} + \frac{4w(M)}{125}$. In addition, for every vertex $v$ on the cycles in $H-M$, the probability of the \AY{following event 

\begin{itemize}
\item $v$ is not in $V(B^*)$ or the matching edge in $M$ incident to $v$ is in $B^*$,
\end{itemize}
}

is at most $\frac{24}{25}\times \frac{1}{4}+\frac{1}{25}\times 1=\frac{7}{25}$ since if \AY{the event is true} then $v$ must not \AY{be} in $B$ before we start including the edges in $M$ and this probability is at most $1/4$ as shown by Claims $\ref{cl:neq5711}$-$\ref{cl:c11}$. 
\end{observation}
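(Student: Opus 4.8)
The plan is to treat the two assertions of the observation in turn: that the random subgraph $B^*$ from the proof of Theorem \ref{thm:main} together with its expectation bound survive when $M$ is merely a matching (with $H-M$ a disjoint union of cycles), and that the displayed per-vertex event has probability at most $\tfrac{7}{25}$.

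For the first assertion I would re-read the construction of $B$, $B'$ and $B^*$ together with the derivation of $(\ref{eq:eB})$--$(\ref{eq:eB*})$ and isolate precisely where perfectness of $M$ is used — the point being that it never is. Each component of $H-M$ is a cycle, so exactly one of Claims \ref{cl:neq5711}--\ref{cl:c11} applies to it (Claim \ref{cl:pathcase} is not needed, as there is no path), and each resulting piece lies in $\mathcal{B}_b(H[V(C)])$; adding the conditionally-chosen matching edges keeps the union in $\mathcal{B}_b(H)$ since those edges are vertex-disjoint balanced bipartite pieces. The edge-inclusion estimates $3/5$ on $5$-cycles and $5/8$ elsewhere come from the cycle claims, while the matching-edge estimates $1/25$ for $M_{55}'$ and $1/40$ for $M_5'$ use only that the two endpoints of such an edge lie in distinct cycles, hence are independent; this independence follows from triangle-freeness, as a $5$-cycle of a triangle-free graph has no chord and so no matching edge joins two of its vertices. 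The single global identity needed is $w(H-M)=w(H)-w(M)$, valid because $E(H)$ splits as $M\cup E(H-M)$. Consequently $(\ref{eq:eB})$, $(\ref{eq:eB'})$ and $(\ref{eq:eB*})$ hold word for word, and the final reduction $\tfrac{1}{25}w(M_{55}')+\tfrac{4}{125}w(M_5')+\tfrac{1}{25}w(M')\ge\tfrac{4}{125}w(M)$ (using $\tfrac{1}{25}=\tfrac{5}{125}\ge\tfrac{4}{125}$) gives $\mathbb{E}(w(B^*))\ge\frac{3(w(H)-w(M))}{5}+\frac{4w(M)}{125}$.

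For the second assertion I would condition on the coin deciding $B^*=B$ (probability $\tfrac{24}{25}$) or $B^*=B'$ (probability $\tfrac{1}{25}$), and write $B_0$ for the union of the cycle subgraphs produced by Claims \ref{cl:neq5711}--\ref{cl:c11}, i.e.\ $B$ before any matching edge is inserted. The crux is that, conditioned on $B^*=B$, the event $E_v$ (that $v\notin V(B^*)$ or the matching edge of $M$ at $v$ lies in $B^*$) is contained in $\{v\notin V(B_0)\}$: if $v\in V(B_0)$ then $v\in V(B)$, killing the first disjunct, and the matching edge at $v$ is never inserted because insertion requires both of its endpoints to be free, killing the second. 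Since each of Claims \ref{cl:neq5711}--\ref{cl:c11} bounds the probability that a given cycle vertex is omitted from its own cycle subgraph by at most $1/4$, we obtain $\Pr(E_v\mid B^*=B)\le 1/4$; bounding $\Pr(E_v\mid B^*=B')\le 1$ trivially and combining by total probability yields $\Pr(E_v)\le\tfrac{24}{25}\cdot\tfrac14+\tfrac{1}{25}\cdot 1=\tfrac{7}{25}$.

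The main obstacle is the bookkeeping in the second assertion: one must argue rigorously that the ``insert only when both endpoints are free'' rule makes $E_v$ and $\{v\in V(B_0)\}$ disjoint under $B^*=B$, and that the $1/4$ ceiling really holds for every relevant cycle vertex. The one delicate case is the $11$-cycle of Claim \ref{cl:c11}, whose statement asserts the $1/4$ bound only for vertices not incident to a chord; here I would check directly from the tables in Figures \ref{figC11} and \ref{figC11'} whether the chord-incident vertices are also omitted with probability at most $1/4$ (equivalently, covered by $B_C$ with probability at least $3/4$), which is the single place where the blanket phrase ``every vertex on the cycles'' needs verification.
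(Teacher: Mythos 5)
Your proposal is correct and takes essentially the same route as the paper: the expectation bound is inherited verbatim from (\ref{eq:eB})--(\ref{eq:eB*}) because perfectness of $M$ is never used there (only that $H-M$ is a disjoint union of cycles, with independence across cycles and the fact that a matching edge with both ends on one $5$-cycle would be a chord creating a triangle), and the $\frac{7}{25}$ bound is exactly the paper's conditioning argument, namely that the event forces $v\notin V(B)$ before any matching edge is inserted, which has probability at most $\frac14$ by Claims \ref{cl:neq5711}--\ref{cl:c11}, whence $\frac{24}{25}\times\frac14+\frac{1}{25}\times 1=\frac{7}{25}$. The caveat you flag about chord-incident vertices of $11$-cycles (which Claim \ref{cl:c11} formally excludes) resolves exactly as you suggest: in the table of Fig.~\ref{figC11} the chord ends $c_0$ and $c_4$ are omitted from $B_C$ with probability $\frac18$ and $0$ respectively (and their matching edges lie in $M'$, hence are never inserted into $B$), while the configuration of Fig.~\ref{figC11'} already covers all eleven vertices, so the $\frac14$ ceiling indeed holds for every cycle vertex.
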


If $G$ is an unweighted \AY{bridgeless cubic}      graph, then by Petersen's theorem \footnote{\YC{Petersen's theorem states that every cubic graph with at most 2 cut edges contains a perfect matching}}  it has a perfect matching with $\frac{|E(G)|}{3}$ edges. Thus, by Lemma \ref{lem:triangle-free cubic}, we can confirm this case for Conjecture \ref{conj:triangle-free cubic}. In the next subsection, we will extend our bound from bridgeless to all subcubic triangle-free graphs not isomorphic to $K_{1,3}$. Also note that to prove Conjecture \ref{conj:triangle-free cubic} for bridgeless cubic graphs, using Lemma \ref{lem:triangle-free cubic}, all we need to show is that the following weighted version of Peterson's theorem holds, which is a consequence of the famous Berge-Fulkerson conjecture (see e.g. \cite{Seymour:BFC}). We believe that this conjecture of ours is interesting in its own right.
\begin{conjecture}\label{cj:2}
	If $ G $ is a weighted bridgeless cubic  graph, then there exists a perfect matching $M$ such that $w(M)\geq \frac{w(G)}{3}$.
\end{conjecture}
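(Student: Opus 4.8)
The plan is to prove the statement \emph{unconditionally}, rather than deriving it from the Berge--Fulkerson conjecture as suggested above. The key observation is that one does not actually need an integral cover of the edges by six perfect matchings: a \emph{fractional} perfect matching of uniform value $\tfrac13$ already suffices, and its existence is guaranteed by Edmonds' perfect matching polytope theorem. First I would recall Edmonds' characterisation: a vector $x\in\mathbb{R}^{E}_{\ge 0}$ lies in the perfect matching polytope of $G$ (that is, is a convex combination of incidence vectors of perfect matchings) if and only if $\sum_{e\ni v}x_e=1$ for every vertex $v$ and $\sum_{e\in\delta(S)}x_e\ge 1$ for every vertex set $S$ of odd cardinality, where $\delta(S)$ denotes the edge cut separating $S$ from $V\setminus S$.

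I would then test the uniform vector defined by $x_e=\tfrac13$ for all $e\in E$. The degree constraint is immediate since $G$ is cubic: $\sum_{e\ni v}x_e=3\cdot\tfrac13=1$. The heart of the argument is the odd-cut constraint, and this is the step I expect to require the most care. For an odd set $S$ one has $\sum_{e\in\delta(S)}x_e=|\delta(S)|/3$, so the inequality reduces to showing $|\delta(S)|\ge 3$ for every odd $S$. This I would obtain from two facts. A parity count gives $|\delta(S)|\equiv \sum_{v\in S}d(v)=3|S|\pmod 2$, which is odd because $|S|$ is odd. And bridgelessness forbids $|\delta(S)|=1$: a cut of size one is precisely a bridge, and since a cubic graph has even order, any such cut separates off an odd set, so it would violate the hypothesis. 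Hence $|\delta(S)|$ is odd and at least $2$, forcing $|\delta(S)|\ge 3$, and the constraint holds.

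With $x\equiv\tfrac13$ shown to lie in the perfect matching polytope, I would write $x=\sum_i\lambda_i\,\chi^{M_i}$ as a convex combination of perfect matchings $M_i$, where $\lambda_i\ge 0$ and $\sum_i\lambda_i=1$. Pairing with the weight function then yields $\sum_i\lambda_i\,w(M_i)=\langle w,x\rangle=\tfrac13 w(G)$, a weighted average of the values $w(M_i)$. Consequently at least one index $i$ satisfies $w(M_i)\ge \tfrac13 w(G)$, and the corresponding $M_i$ is the desired perfect matching.

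Thus the only genuine obstacle along the Berge--Fulkerson route, namely the openness of that conjecture, is avoided entirely by passing to the fractional relaxation; within the proof itself the single delicate point is the verification of the odd-cut inequality, which the parity-plus-bridgeless argument handles. I would emphasise that this shows the statement is in fact a theorem, following from the (much weaker) perfect matching polytope theorem, even though Berge--Fulkerson would also imply it.
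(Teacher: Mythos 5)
Your proof is correct, and it accomplishes something the paper does not attempt: the paper states this result only as Conjecture \ref{cj:2}, remarking that it would follow from the (open) Berge--Fulkerson conjecture, whereas you prove it unconditionally. Your verification that the uniform vector $x_e\equiv\tfrac13$ lies in the perfect matching polytope is sound: the degree constraints hold because $G$ is cubic, and for every odd set $S$ the parity identity $|\delta(S)|\equiv\sum_{v\in S}d_G(v)=3|S|\pmod 2$ makes $|\delta(S)|$ odd, while bridgelessness excludes $|\delta(S)|=1$, so $|\delta(S)|\ge 3$ and $x(\delta(S))\ge 1$. Edmonds' theorem then expresses $x=\sum_i\lambda_i\chi^{M_i}$ as a convex combination of perfect matchings, and since $\sum_i\lambda_i w(M_i)=\langle w,x\rangle=\tfrac13 w(G)$, some $M_i$ has weight at least $\tfrac13 w(G)$. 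Comparing the two routes: Berge--Fulkerson would supply an integral certificate (six perfect matchings covering each edge exactly twice), from which the same averaging gives the bound, but that conjecture is open; you observe that the averaging only needs a fractional perfect matching of constant value $\tfrac13$, and that is guaranteed by a theorem rather than a conjecture. This observation is in fact classical (it appears, for example, as the starting point of Kaiser, Kr\'al' and Norine's work on unions of perfect matchings in cubic graphs), so the paper's Conjecture \ref{cj:2} is known to be true. Note the payoff inside the paper: combining your result with Lemma \ref{lem:triangle-free cubic} yields, for every weighted bridgeless cubic triangle-free graph, a bisection of weight at least $\tfrac35 w(G)+\tfrac25\cdot\tfrac{w(G)}{3}=\tfrac{11}{15}w(G)$, which settles Conjecture \ref{conj:triangle-free cubic} in the bridgeless cubic case and improves on the $\tfrac{613}{855}$ bound of Theorem \ref{thm:main} for such graphs.
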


\subsection{Extending bridgeless case to all cases except a claw}
Let $G$ be a triangle-free subcubic graph and $H$ a 2-edge-connected component of $G$. We say that $H$ is a {\em leaf} component if it \AY{is} only incident to one bridge in $G$. We say that an odd leaf component $H$ of $G$ is {\em tight} if, and only if, all but one \AY{vertex has} degree 3 in $H$ and exactly one vertex has degree two in $H$ (this vertex is incident to a bridge in $G$). We say $H$ is trivial if $|V(H)|=1$ and otherwise nontrivial.

We call a bridgeless triangle-free subcubic graph of odd order {\em tight} if all but one vertex is of degree 3 and only one vertex has degree 2. Recall that $\theta = \frac{613}{855} \approx 0.716959$.
\begin{theorem}        \label{TightProb}
Let $G$ be a tight bridgeless triangle-free subcubic graph. Let $x$ be the degree 2 vertex in $G$. Then we can find a random bisection of $G$ with average weight at least $\theta \cdot w(G)$, such that the $x$ belongs to the larger part with probability at most $0.79$ and at least $0.74$.
\end{theorem}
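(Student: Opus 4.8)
The plan is to recycle, essentially verbatim, the odd-case machinery from the proof of Theorem \ref{thm:main} and to superimpose on it a bookkeeping of where the degree-$2$ vertex $x$ lands, so that the weight bound is inherited for free and only the position of $x$ needs new analysis. Since $G$ is tight it has odd order and $x$ is its unique degree-$2$ vertex; writing $x_1,x_2$ for its (non-adjacent) neighbours, I would form $H''$ by attaching a pendant vertex $x'$ to $x$ through a weight-$0$ edge $xx'$, and invoke Lemma \ref{lem:mt-gvpt} to obtain a perfect matching $M=M_1\cup\{xx'\}$ of $H''$ for which $H''-M$ is a disjoint union of cycles together with the isolated vertex $x'$. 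Exactly as in Theorem \ref{thm:main}, a $G$-bisection is read off from an $H''$-bisection by deleting $x'$, so the larger part of $G$ is precisely the $H''$-part \emph{not} containing $x'$; hence $x$ lies in the larger part of $G$ if and only if the $H''$-bisection separates $x$ and $x'$.

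I would then realise $\theta\cdot w(G)$ as an honest mixture of the two distributions whose weighted lower bounds were averaged in Theorem \ref{thm:main}: with probability $71/171$ output the bisection $\mathcal{D}_1$ coming from Lemma \ref{lem:triangle-free cubic} applied to $H''$ (expected weight $\geq\tfrac35 w(G)+\tfrac25 w(M)$), and with probability $100/171$ output the bisection $\mathcal{D}_2$ obtained by applying Lemma \ref{lem:bbsg} to the random subgraph $B^*$ of Claims \ref{cl:neq5711}--\ref{cl:c11} (expected weight $\geq\tfrac45 w(G)-\tfrac{71}{250}w(M)$). The convex combination reproduces expected weight $\geq\theta\cdot w(G)$. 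For the separation probability, note that in $\mathcal{D}_1$ the pendant $x'$ touches only $x$, so the contracted vertex $v_{xx'}$ of $H''/M$ carries all its weight on $x$'s two real edges; whether or not $v_{xx'}$ is used by the auxiliary matching of Lemma \ref{lem:triangle-free cubic}, the vertices $x$ and $x'$ end up on opposite sides of a single bipartite block, so $\mathcal{D}_1$ separates them with probability exactly $1$.

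For $\mathcal{D}_2$ I would exploit the freedom, whenever both $x$ and $x'$ are uncovered by $B^*$, to pair them as a balanced block in the sense of Lemma \ref{lem:bbsg}; then $x,x'$ are separated precisely when the event $E=\{x\notin V(B^*)\}\cup\{xx'\in B^*\}$ occurs, and are separated with probability exactly $1/2$ otherwise (independent coin flips), giving $P_{\mathcal{D}_2}(x\in\text{larger})=\tfrac12+\tfrac12\,\mathbb{P}(E)$. Crucially $x$ is incident to no chord of its cycle, since its only two edges are the cycle edges $xx_1,xx_2$; hence the per-vertex exclusion estimates of Claims \ref{cl:neq5711}, \ref{cl:c5}, \ref{cl:c7} and \ref{cl:c11} (exclusion probability in $[1/8,1/4]$, equal to $1/5$ on a $5$-cycle) apply to $x$. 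Combining these with the $24/25$--$1/25$ split defining $B^*$ and with the behaviour of $xx'$ (added inside $B'$ but, depending on whether $x$ lies on a $5$-cycle, not always inside $B$) gives $\tfrac{4}{25}\leq\mathbb{P}(E)\leq\tfrac{7}{25}$, the upper bound being exactly Observation \ref{obsX1}. Thus $P_{\mathcal{D}_2}\in[0.58,0.64]$ and $P(x\in\text{larger})=\tfrac{71}{171}+\tfrac{100}{171}P_{\mathcal{D}_2}$ lies in $[\tfrac{129}{171},\tfrac{135}{171}]\subseteq[0.74,0.79]$.

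The main obstacle is that the weight bound of Theorem \ref{thm:main} has no slack, so the location of $x$ cannot be engineered by trading away weight; it must emerge from structure that is already present. The delicate points are therefore (i) verifying that $\mathcal{D}_1$ separates $x$ and $x'$ with probability $1$ in \emph{every} case of Lemma \ref{lem:triangle-free cubic} (both when $v_{xx'}$ is matched and when $xx'$ survives as a stand-alone block), and (ii) pinning the two-sided estimate on $\mathbb{P}(E)$ tightly enough that the resulting interval genuinely sits inside $[0.74,0.79]$, which relies on the sharp $[1/8,1/4]$ exclusion probabilities for the chordless vertex $x$ and on handling the $5$-, $7$- and $11$-cycle cases separately.
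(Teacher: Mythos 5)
Your proposal is correct and follows essentially the same route as the paper: the same pendant-vertex construction with a perfect matching obtained via Lemma \ref{lem:mt-gvpt} (strictly, applied after suppressing the degree-2 vertex, since $H''$ itself is neither cubic nor bridgeless --- the paper glosses this in the same way), the same two random bisections (the one from Lemma \ref{lem:triangle-free cubic}, which always places $x$ in the larger part, and the one from Lemma \ref{lem:bbsg} applied to the $B^*$ of Observation \ref{obsX1}, with $x$ left unpaired or paired to $x'$ when uncovered), and the same $\frac{71}{171}$--$\frac{100}{171}$ mixture. The only differences are bookkeeping: you track separation of $x$ and $x'$ inside $H''$ where the paper tracks $x$ directly in $G$, and your estimate $\mathbb{P}(E)\in[4/25,7/25]$ is a marginally sharper version of the paper's $p_x\in[1/8,7/25]$; both yield final probabilities inside $[0.74,0.79]$.
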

\begin{proof}
 We construct a new graph $G'$ from $G$ by adding a new vertex $y$ and the edge $xy$ with weight 0. Let $M'$ be the perfect matching containing $xy$ in $G'$ (one can find this perfect matching by the same argument in the paragraph before the last of the proof of Theorem \ref{thm:main}). Then $G'-M'$ is a collection of cycles with an isolated vertex $y$. Let $M=M'-xy$. Next we will prove two claims.

\2
\begin{claim}\label{cl:b1}
 There exists a bisection of $G$ with weight at least $\frac{3}{5}w(G)+\frac{2}{5}w(M)$, where $x$ is in the larger part.
\end{claim}
\2

{\bf The proof of Claim \ref{cl:b1}.} By Lemma \ref{lem:triangle-free cubic}, $G'$ has a bisection $(X,Y)$ with weight at least $\frac{3}{5}w(G')+\frac{2}{5}w(M')$ such that $xy\in (X,Y)$ and therefore $x$ and $y$ are in different parts. Observe that $(X\setminus \{y\},Y\setminus \{y\})$ is now a desired bisection of $G$. This completes the proof of Claim \ref{cl:b1}. \smallQED

\2
\begin{claim}\label{cl:b2}
There exists a random bisection of $G$ with average weight at least  $\frac{4}{5} w(G) - \frac{71 w(M)}{250}$  and such that $x$ belongs to the larger set with probability at 
most $16/25$ and at least $9/16$.
\end{claim}
\2

{\bf The proof of Claim \ref{cl:b2}.} Let $B^*$ be the random bipartite subgraph described in Observation \ref{obsX1}. Let $p_x$ be the probability that $x$ is not in $V(B^*)$. Since $x$ is not incident to any edge in $M$, we can always leave $x$ unpaired as long as $x\notin V(B^*)$ when applying Lemma \ref{lem:bbsg}. Thus, if $x\notin V(B^*)$, then $x$ will always be in the larger part, and the expectation of the weight of the bisection obtained from $B^*$ by applying Lemma \ref{lem:bbsg} is still $\frac{4}{5} w(G) - \frac{71 w(M)}{250}$. Recall that by Claims \ref{cl:neq5711}-\ref{cl:c11}, $p_x\geq \frac{1}{8}$ and by Observation~\ref{obsX1}, $p_x\leq 7/25$. If $x\in V(B^*)$ then there is a 50\% chance that $x$ will belong to the larger part and a 50\% chance it will be in the smaller part when applying Lemma \ref{lem:bbsg}. So the probability that $x$ will belong to the larger part is $p_x+0.5\times (1-p_x)$ which is at most $7/25 + 0.5 \times 18/25 = 16/25$ and at least $1/8+0.5\times 7/8=9/16$. This completes the proof of Claim \ref{cl:b2}. \smallQED

\2 

As in our proof for Theorem \ref{thm:main}, we now obtain the following by picking the bisection from Claim~\ref{cl:b1} with probability $\frac{71}{171}$ and the bisection from Claim~\ref{cl:b2} with probability  \AY{$\frac{100}{171}$,}

\[
\begin{array}{rcl} \vspace{0.1cm}
mb(G) & \geq & \frac{100}{171} \left( \frac{4}{5} w(G) - \frac{71 w(M)}{250} \right)
          + \frac{71}{171} \left( \frac{3}{5} w(G) + \frac{100}{250} w(M) \right) \\ \vspace{0.1cm}
& = & \frac{400 + 213}{5 \cdot 171} w(G) \\
& = & \frac{613}{855} w(G) = \theta \cdot w(G) \approx 0.716959 w(G). \\
\end{array}
\]

We furthermore note that the probability that $x$ belongs to the larger part in the bisection is at most the following:
\[
\frac{100}{171} \times \frac{16}{25} +  \frac{71}{171} \times 1 = \frac{135}{171} < 0.79.
\]
The probability that $x$ belongs to the larger part in the bisection is at least the following:
\[
\frac{100}{171} \times \frac{9}{16} +  \frac{71}{171} \times 1 = \frac{225+284}{4 \cdot 171} > 0.74,
\]
which completes the proof.
\end{proof}     

\YC{Recall that for any $i\in \{2,3\}$, $V_i(G)$ denotes the set of vertices with degree $i$ in $G$. }

\begin{lemma}\label{lem:2component}
Let $G$ be a triangle-free subcubic graph with two 2-edge-connected components $H_1$ and $H_2$, where $|V(H_1)|$ is odd, $V_2(H_1)=N_{H_1}[v]=\{v,v_1,v_2\}$, $V(H_2)=\{u\}$ and $vu$ is bridge. Then, $G$ has a bisection with weight at least $\theta\cdot w(G)$. 
\end{lemma}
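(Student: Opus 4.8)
The plan is to reduce the statement to the tight case already settled in Theorem~\ref{TightProb}. Since $H_2=\{u\}$ is trivial and $vu$ is a bridge, $G$ is obtained from the $2$-edge-connected graph $H_1$ by attaching the pendant vertex $u$ at $v$; thus in $G$ the vertex $v$ has degree $3$, the vertices $v_1,v_2$ have degree $2$, and $u$ has degree $1$. First I would form an auxiliary graph $\hat G$ from $G$ by adding one new vertex $w$ together with the three edges $wv_1$, $wv_2$, $wu$, all of weight $0$. Because $v_1v_2\notin E(G)$ (as $G$ is triangle-free and $v_1,v_2\in N_{H_1}(v)$) and $u$ is adjacent only to $v$, the vertex $w$ creates no triangle; moreover $wv_1,wv_2,wu$ all lie on cycles (for instance $u\,v\,v_1\,w\,u$), so no bridge is created and $\hat G$ is again $2$-edge-connected and triangle-free. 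In $\hat G$ the vertices $v_1,v_2,w$ all have degree $3$ and $u$ is the unique vertex of degree $2$; since $|V(G)|$ is even, $|V(\hat G)|$ is odd. Hence $\hat G$ is a \emph{tight} bridgeless triangle-free subcubic graph with degree-$2$ vertex $u$, and $w(\hat G)=w(G)$ because every added edge has weight $0$. This is exactly where the hypotheses $V_2(H_1)=N_{H_1}[v]$, $V(H_2)=\{u\}$ and triangle-freeness are used.

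Next I would run the construction of Theorem~\ref{TightProb} on $\hat G$ to obtain a random bisection of $\hat G$ of expected weight at least $\theta\,w(\hat G)=\theta\,w(G)$. Recall that this construction takes a convex combination of the bound $\tfrac{3}{5}w+\tfrac{2}{5}w(M)$ coming from Lemma~\ref{lem:triangle-free cubic} (as in Claim~\ref{cl:b1}) with the bound $\tfrac{4}{5}w-\tfrac{71}{250}w(M)$ coming from the random bipartite subgraph $B^\ast\in\mathcal{B}_b(\hat G)$ of Observation~\ref{obsX1}, where $M$ is the near-perfect matching of $\hat G$ (missing only $u$) produced by suppressing $u$ into the edge $vw$ and invoking Lemma~\ref{lem:mt-gvpt}, so that $\hat G-M$ is a disjoint union of cycles.

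The crucial step is to turn this bisection of $\hat G$ into a \emph{balanced} bisection of $G$ by deleting $w$. Since $\hat G$ has odd order, its two parts differ in size by exactly one, and deleting $w$ yields a balanced bisection of the even-order graph $G$ precisely when $w$ lies in the larger part; as all three edges at $w$ have weight $0$, the deletion does not change the cut weight. I would therefore force $w$ into the larger part: because every edge incident to $w$ has weight $0$ (its two cycle-edges in $\hat G-M$ both have weight $0$), $w$ may be excluded from $B^\ast$ without decreasing $\mathbb{E}(w(B^\ast))$, and then, when $B^\ast$ is converted into a bisection via Lemma~\ref{lem:bbsg}, $w$ is a free leftover vertex that I place on the side that becomes larger. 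Deleting $w$ then gives a balanced bisection of $G$ whose expected weight equals that of the $\hat G$-bisection, namely at least $\theta\,w(G)$, so some bisection of $G$ attains weight at least $\theta\,w(G)$.

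The main obstacle is exactly this parity/balance issue: the natural reduction inflates $G$ to the odd-order tight graph $\hat G$, and an arbitrary bisection of $\hat G$ need not restrict to a balanced bisection of $G$. What makes it work is that the auxiliary vertex $w$ carries only weight-$0$ edges, so it can be dropped from the random bipartite subgraph at no cost and then steered into the larger part; this decouples the balance of $G$ from wherever Theorem~\ref{TightProb} happens to place the controlled degree-$2$ vertex $u$, and in particular means the probability bounds on $u$ are not needed for this lemma (they are retained for the subsequent gluing arguments). The remaining routine point is to confirm that $\hat G$ is genuinely tight and bridgeless and that excluding $w$ leaves the weight estimate of Claims~\ref{cl:neq5711}--\ref{cl:c11} intact.
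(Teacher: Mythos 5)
Your construction of $\hat G$ is sound: adding $w$ adjacent to $v_1,v_2,u$ does produce a tight, bridgeless, triangle-free subcubic graph of odd order whose unique degree-2 vertex is $u$, with $w(\hat G)=w(G)$, and it is also true that a vertex left unpaired when Lemma \ref{lem:bbsg} is applied to an odd-order graph always ends up in the larger part. The gap is the step that gets $w$ to be such a vertex: you assert that $w$ ``may be excluded from $B^*$ without decreasing $\mathbb{E}(w(B^*))$'' because its incident edges have weight $0$. This is unjustified, and false as stated. Membership in $\mathcal{B}_b(\hat G)$ requires every component to have partite sets of \emph{equal size}, so deleting $w$ from a component of $B^*$ forces you to delete (or never include) a partner vertex from the opposite side, whose incident edges need not have weight $0$. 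Concretely, in $\hat G-M$ (where $M$ is the near-perfect matching missing $u$) the cycle through $w$ contains the consecutive vertices $v_1,w,u,v$, so the 4-vertex components produced by Claims \ref{cl:neq5711}--\ref{cl:c11} that contain $w$ include $\hat G[\{v_1,w,u,v\}]$ (a 4-cycle, since $vv_1\in E(G)$) and $\hat G[\{w,u,v,z\}]$; removing $w$ and rebalancing costs $\min\{w(uv),w(vv_1)\}$, respectively $\min\{w(uv),w(vz)\}$, and these edges may carry essentially all the weight of $G$. Conditioning instead of deleting is even worse: in the $r=0$ case of Claim \ref{cl:neq5711}, the event $w\notin V(B_C)$ coincides with $B_C$ being the null graph, so conditioning on it destroys the whole $5/8$ edge-coverage guarantee. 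Making ``exclude $w$ for free'' rigorous would require redesigning the probability space on the cycle through $w$ (a path-type analysis in the spirit of Claim \ref{cl:pathcase}), which is essentially the bespoke work your reduction was meant to avoid.

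There is a second, independent hole: Theorem \ref{TightProb} is a convex combination of \emph{two} random bisections, and you only attempt to control $w$ in the one coming from Observation \ref{obsX1} (the analogue of Claim \ref{cl:b2}). With probability $\frac{71}{171}$ the theorem uses the bisection of Claim \ref{cl:b1}, obtained from Lemma \ref{lem:triangle-free cubic} via a maximum-weight matching of $\hat G'/M'$; nothing there controls on which side $w$ falls. For instance, if that matching pairs the contracted edge $uy$ with $w$'s matching edge, the resulting component is the path $y\,u\,w\,v_2$ with partite sets $\{y,w\}$ and $\{u,v_2\}$, so $w$ lies in the same part as $y$ and hence in the \emph{smaller} part once $y$ is deleted; deleting $w$ then leaves parts differing by $2$, which is not a bisection. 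By contrast, the paper never leaves $G$: it builds a genuine perfect matching $M$ of $G$ (Lemma \ref{lem:mt-gvpt} applied to $H_1-v$ with a doubled edge $v_1v_2$, plus the bridge $uv$), applies Lemma \ref{lem:triangle-free cubic} directly to $G$, and then handles $G-M$ --- cycles, the path $v_1vv_2$ and the isolated vertex $u$ --- by a separate case analysis. Your reduction idea is attractive, but both halves of the convex combination need a mechanism forcing $w$ into the larger part, and the proposal supplies neither.
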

\begin{proof}
Let $H'$ be the multigraph obtained from $H_1-v$ by adding two parallel edges $v_1v_2$ with weight 0. Since $H'$ is 2-edge-connected cubic, by Lemma \ref{lem:mt-gvpt}, $H'$ has a perfect matching $M'$ that includes the edge incident to $v_1$ in $H_1-v$ and therefore does not include $v_1v_2$. Then, $M'$ is also a perfect matching of $H_1-v$. Thus, $M=M'\cup\{vw\}$ is a perfect matching of $G$. By Lemma \ref{lem:triangle-free cubic}, $G$ has a bisection $B_a$ with weight at least $\frac{3}{5}w(G)+\frac{2}{5}w(M)$. 

	Observe that $G-M$ is a collection of cycles, one path $P=v_1vv_2$ and one isolated vertex \AY{$u$.}
	
	 \begin{claim}\label{cl:4e}
Let $C$ be a cycle in $G-M$. If $|C|=0 (mod~4)$, then there exists a $B_C\in \mathcal{B}_b(G[V(C)])$ such that $w(B_C)\geq \frac{3}{4}w(C)$. If $|C|=2 (mod~4)$, then there exists a $B_C\in \mathcal{B}_b(G[V(C)])$ such that $w(B_C)\geq \frac{7}{10}w(C)$.
	\end{claim}
	{\bf The proof the Claim \ref{cl:4e}. } Assume that $C=x_1x_2\dots x_nx_1$. If $n=4$ or 6, then since $G$ is triangle-free, $C$ is an induced cycle in $G$ and therefore $B_C=C$ is a bipartite subgraph in $\mathcal{B}_b(G[V(C)])$. 
		
		Now we assume $n\geq 8$, choose $i$ from $[n]$ randomly such that $i$ take any number in $[n]$ with probability $1/n$. Let $B_C\in \mathcal{B}_b(G[V(C)])$ formed by 
		\begin{itemize}
		 \item $\cup_{t=0}^{\frac{n-4}{4}} G[\{x_{i+4t},x_{i+4t+1}, x_{i+4t+2},x_{i+4t+3}\}]$ when $n=0 (mod~4)$,\\
		 \item $(\cup_{t=0}^{\frac{n-6}{4}} G[\{x_{i+4t},x_{i+4t+1}, x_{i+4t+2},x_{i+4t+3}\}])\cup \{x_{i+n-2}x_{i+n-1}\}$ when $n=2 (mod~4)$,
		\end{itemize}
		where indices are taken modulo $n$. Observe that each edge of $C$ is in $B_C$ with the same probability which is 
$\frac{3}{4}$ when \AY{$n=0(mod~4)$} and $\frac{3n-2}{4n}$ when \AY{$n=2(mod~4)$.} By the linearity of expectation there exists a bipartite subgraph $B_C\in \mathcal{B}_b(G[V(C)])$ with weight 
		at least $\frac{3}{4}w(C)$ when $n=0(mod~4)$ and with weight 
		at least $\frac{3n-2}{4n}w(C)\geq \frac{7}{10}w(C)$ when $n=2(mod~4)$. \smallQED
		
		\vspace{2mm}
	
Now we continue with the proof of the lemma.	If $G-M$ has no odd cycle, then let $C$ be an even cycle in $G-M$. By Claim \ref{cl:4e}, let $B_C\in G[V(C)]$ with $w(B_C)\geq \frac{7}{10}w(C)$ and let $G_1=G[\{v_4,v_5,v_6,v_7\}]$ be one of the components in $B_C$. Observe that either $w(B_C-v_5)$ or $w(B_C-v_7)$ has weight at least $\frac{w(B_C)}{2}$. \YC{Assume that $w(B_C-v_i)\geq \frac{w(B_C)}{2}\geq \frac{7}{20} w(C)$ where $i\in \{5,7\}$. } Furthermore, we assume without loss of generality that $w(vv_1)\geq \frac{1}{2}w(P)$. Let $B_1=(B_C-V(G_1))\cup \YC{G[\{v_4,v_6\},\{v_{12-i},u\}]\cup G[\{v,v_i\},\{v_1,v_2\}]}$ and $B_2=B_C\cup\{vv_1\}$. Observe that both $B_1$ and $B_2$ are in $\mathcal{B}_b(G)$. In addition,
\[
		w(B_1)\geq \frac{7}{20} w(C)+w(P)=\frac{7}{20}(w(C)+w(P))+\frac{13}{20}w(P),
\]
	and 
\[
		w(B_2)\geq \frac{7}{10}w(C)+\frac{1}{2}w(P)=\frac{7}{10}(w(C)+w(P))-\frac{1}{5}w(P).\]

Thus, 
	\[
	\max\{w(B_1),w(B_2)\} \geq \frac{4}{17}w(B_1)+\frac{13}{17}w(B_2)\geq\frac{21}{34}(w(C)+w(P)).\]
	
Without loss of generality, we assume that $w(B_1)\geq \frac{21}{34}(w(C)+w(P))$. By Claim \ref{cl:4e}, there exists a bipartite subgraph $B_3$ in $\mathcal{B}_b(G-V(C)-V(P))$ with weight at least $\frac{7}{10}(w(G)-w(M)-w(C)-w(P))\geq \frac{21}{34}(w(G)-w(M)-w(C)-w(P))$. By Lemma \ref{lem:bbsg}, $G$ has a bisection $B_b$ with weight at least $\frac{1}{2}(w(G)+w(B_1)+w(B_3))\geq \frac{55}{68}w(G)-\frac{21}{68}w(M)$. Thus, 

\begin{eqnarray*}
\max\{w(B_a),w(B_b)\}&\geq& \frac{105}{241}\left(\frac{3}{5}w(G)+\frac{2}{5}w(M)\right)+\frac{136}{241}\left(\frac{55}{68}w(G)-\frac{21}{68}w(M)\right) \\
&=& \frac{173}{241}w(G)>\theta \cdot w(G),
\end{eqnarray*}
which completes the proof of this case.

 Thus, we now assume that $G-M$ has an odd cycle $C$. If there are no 5-cycles in $G-M$, then by Claims \ref{cl:neq5711}-\ref{cl:c11}, there is a bipartite subgraph $B$ in $\mathcal{B}_b(G-V(P))$ with weight at least $\frac{5 (w(G)-w(M)-w(P))}{8}$ and every vertex on an odd cycle of $G-M$ in $V(G)\setminus V(B)$ can be used to balance $P$ so that we have a bipartite subgraph in $\mathcal{B}_b(G)$ with weight at least $w(B)+w(P)$. Thus, by Lemma \ref{lem:bbsg}, $G$ has a bisection with weight at least $\frac{13w(G)}{16}-\frac{5w(M)}{16}$. Therefore,
 
 \begin{eqnarray*}
\max\{w(B_a),w(B_c)\}&\geq& \frac{25}{57}\left(\frac{3}{5}w(G)+\frac{2}{5}w(M)\right)+\frac{32}{57}\left(\frac{13}{16}w(G)-\frac{5}{16}w(M)\right) \\
&=& \frac{41}{57}w(G)>\theta \cdot w(G),
 \end{eqnarray*}
 
Thus, we assume that $C$ is a 5-cycle in $G-M$. Let $B^*$ be the random bipartite graph for \AY{$G-M-(V(P)\cup\{u\})$} discribed in Observation \ref{obsX1}. The probability $p_1$ of $V(C)\subseteq V(B^*)$ is the probability that the matching edge in $M$, incident to the isolated vertex in $C$, is contained in $B$, which is at most $7/25$ by Observation \ref{obsX1}. Therefore, there is $1-p_1\geq 18/25$ chance that there is a vertex $v'\in V(C)\setminus V(B^*)$. Let $0<p_2<1$, such that $(1-p_1)\times p_2=\frac{5}{8}$. Thus, when there is such a vertex $v'$, with probability $p_2$, we add $G[v_1,v_2,v,v']$ to $B^*$. Thus, with probability $3/8$, we leave $V(P)$ as isolated vertices, and therefore we can add edges between $V(P)$ and $V(G)\setminus V(P) $ to $V(B^*) $ when the endpoints of the edges in $ V(G)\setminus V(P)$ is not in $V(B^*) $ at the moment. One can show that $\mathbb{E}(w(B^*))\geq \frac{3(w(G)-w(M))}{5}+\frac{4w(M)}{125}$ by observing that the edges in $V(P)$ is included in $B^*$ with probability $5/8$ and every edge in $M$ between $V(G)\setminus V(P)$ and $V(P)$ is included in $B^*$ with probability at least $\frac{3}{8}\times \frac{1}{8}>\frac{1}{25}$. 
So, we are done by a discussion similar  to that in Theorem \ref{thm:main}.
\end{proof}

\begin{theorem}         
Every triangle-free subcubic graph $G$, different than the claw, has a bisection with weight at least $\theta \cdot w(G)$.
\end{theorem}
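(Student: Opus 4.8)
The plan is to induct on the number of bridges of $G$ (equivalently, on the number of $2$-edge-connected components in its bridge-block tree). If $G$ is bridgeless then it is not the claw, and Theorem \ref{thm:main} already gives a bisection of weight at least $\theta\cdot w(G)$; this is the base case. For the inductive step, assume $G$ has a bridge. In the bridge-block tree choose a leaf $2$-edge-connected component $H$; it meets the rest of $G$ through a single bridge $e=xy$ with $x\in V(H)$, so that $x$ has degree $2$ in $H$. Write $G_1=G-V(H)$, a smaller triangle-free subcubic graph to which I intend to apply the induction hypothesis.

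If $H$ has even order the argument is short. By Theorem \ref{thm:main}, $H$ has a balanced bisection of weight at least $\theta\cdot w(H)$, and by induction $G_1$ has a bisection of weight at least $\theta\cdot w(G_1)$ (the degenerate cases in which $G_1$ is empty, a single vertex, or a claw being checked by hand). Since the parts of $H$ are equal, I may flip $H$ relative to $G_1$ so that $x$ and $y$ fall on opposite sides, placing $e$ in the cut without disturbing balance; the resulting bisection of $G$ has weight at least $\theta(w(H)+w(G_1))+w(e)=\theta\cdot w(G)+(1-\theta)w(e)\ge \theta\cdot w(G)$. The real difficulty is the case where $H$ has odd order and is nontrivial, because then every bisection of $H$ has its two parts differing by exactly one, so $H$ contributes an unavoidable imbalance of $\pm1$, and the orientation needed to force $e$ into the cut may conflict with the orientation needed to keep all of $G$ balanced.

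To overcome this I would carry a stronger inductive invariant modelled on Theorem \ref{TightProb} and Lemma \ref{lem:2component}: for a graph with a distinguished interface vertex of degree $2$, produce a random bisection of average weight at least $\theta\cdot w(G)$ in which the interface vertex lies in the larger part with probability confined to a fixed window (the window between $0.74$ and $0.79$ of Theorem \ref{TightProb}). When $H$ is a tight odd leaf, Theorem \ref{TightProb} supplies exactly such a random bisection; when $H$ is odd but not tight, its additional degree-$2$ vertices only provide more slack, so an analogous construction using the random subgraph $B^*$ of Observation \ref{obsX1} together with Lemma \ref{lem:bbsg} applies. Gluing $H$ to $G_1$ then becomes a convex-combination argument: choosing independent random bisections on $H$ and on $G_1$ and mixing the two ways of identifying their sides, the interface-probability window guarantees that the combined parts differ by at most one with the correct probability while $e$ is cut often enough that the expected weight remains at least $\theta\cdot w(G)$; the representative two-block configuration is precisely Lemma \ref{lem:2component}.

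The main obstacle, and the place where the specific constant $\theta=\frac{613}{855}$ and the probability window are doing the work, is exactly this balancing step when two odd blocks are adjacent: one must simultaneously force the shared bridge into the cut and cancel the two $\pm1$ imbalances, and it is only the controlled placement of the interface vertices in the larger part that makes both achievable on average without sacrificing the weight bound. Finally I would dispose of the boundary cases in which peeling produces a claw (or a single vertex, or an empty remainder) by direct inspection, since $K_{1,3}$ is the unique exception and must be excluded by hand wherever it would otherwise arise.
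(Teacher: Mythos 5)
Your overall architecture (induction over the bridge structure, easy even-leaf gluing, a probability-window invariant for odd leaves, convex combinations, claws as exceptions) is the paper's architecture, but two of your load-bearing steps do not hold up. The most serious gap is your treatment of odd leaf components that are \emph{not} tight. You claim their extra degree-$2$ vertices ``only provide more slack'' so that an analogue of Theorem~\ref{TightProb} built from Observation~\ref{obsX1} applies. It does not: the construction behind Theorem~\ref{TightProb} depends precisely on tightness. There, attaching a single pendant vertex to the unique degree-$2$ vertex yields a graph whose deletion of that vertex is bridgeless \emph{cubic}, so Lemma~\ref{lem:mt-gvpt} produces a perfect matching $M$ for which the complement is a disjoint union of cycles --- exactly the structure Claims~\ref{cl:neq5711}--\ref{cl:c11} and Observation~\ref{obsX1} require. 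A non-tight odd leaf component has at least three vertices of degree $2$ (by parity), so no single added vertex makes it cubic, no perfect matching with the needed cycle structure is available, and your ``analogous construction'' has no proof. The paper never builds a random bisection for such components; instead, in its Case~1 it selects degree-$2$ vertices in two different leaf components, verifies they have no common neighbour, and inserts a zero-weight edge between them, which lowers $c_2(G)$ at fixed vertex count and lets induction apply. This zero-weight edge-addition reduction --- which is also what rescues the configuration where peeling an even leaf would leave a claw behind --- is entirely absent from your proposal, and without it (or a genuinely new argument) the non-tight odd case is open.

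The second gap is the gluing step itself. A ``random bisection'' of $G$ whose two parts ``differ by at most one with the correct probability'' is not a bisection: balance must hold in \emph{every} outcome, since otherwise Lemma~\ref{lem:bbsg}-style expectation arguments certify nothing about bisections of $G$. In the paper, the randomness of Theorem~\ref{TightProb} never touches the balance of the glued partition; it is converted into the deterministic inequality $(3-3\theta)\,w_S^{large}+(3\theta-2)\,w_S^{small}\ge \theta\, w(G_S)$, which then feeds a convex combination of four explicitly balanced bisections, and this is used only when the remainder is a claw (Subcases 2.A--2.B). All other configurations are resolved by a deterministic case analysis on whether the bridge endpoints sit in the larger or smaller parts of maximum bisections of $G_S$, $G_T$, $G[T\cup\{u\}]$ or $G[S\cup\{v\}]$ (Subcases 2.C--2.E). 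Relatedly, the claw boundary cases you defer to ``direct inspection'' are exactly Subcases 2.A--2.B: they are not finite checks, because the block attached to the claw is an arbitrary weighted graph, and they are where the window $[0.74,0.79]$ and the value of $\theta$ are actually consumed. Finally, note that your induction measure (number of bridges alone) would not decrease for the auxiliary graphs $G[T\cup\{u\}]$ needed in this gluing, since $uv$ remains a bridge there; the paper inducts on the pair $(|V(G)|,c_2(G))$ in lexicographic order for precisely this reason.
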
           

\begin{proof}     

Let $c_2(G)$ be the number of 2-edge-connected components in $G$. 
We prove this by induction on \AY{the lexicographical order of $(|V(G)|,c_2(G))$.} 
It is trivial when $|V(G)|\leq 3$, and by Theorem \ref{thm:main}, it is true when $c_2(G)=1$. 
	
We assume that $|V(G)|\geq 4$ and $c_2(G)\geq 2$, and this theorem is true for graphs with order less than $|V(G)|$ or \AY{with the order equal to $|V(G)|$ but with fewer} 2-edge-connected components than $G$. If there is a leaf component $H$ with an even order, then by the induction hypothesis, $G-V(H)$ and $H$ have bisections $(X_1, Y_1)$ and $(X_2, Y_2)$ with the desired proportion of weight. Assume without loss of generality that the bridge is between $X_1$ and $Y_2$ then $w(X_1\cup X_2, Y_1\cup Y_2)$ is a bisection of $G$ with weight at least $\theta\cdot w(G)$. Thus, we assume that all leaf components of $G$ are of odd order.

\2

{\bf Case 1:} {\em No 2-edge-connected component in $G$ is tight.}

\2

{\bf Proof for Case 1.} We first assume that $G$ has a nontrivial leaf component. Let $H_1$ be \AY{a nontrivial leaf component and let $H_2$ be} another leaf component. Since $G$ has no tight component, there exist vertices $v\in V(H_1)$ and $w\in V(H_2)$ such that $d_G(v)=2$ and $d_G(w)\leq 2$. \YC{If $c_2(G)>2$, then there is a leaf component such that none of the cut edges is between $H_1$ and this component. Thus, we may further assume that $H_2$ is this component. One can observe that $v$ and $w$ have no common neighbour (as $v$ is not incident to any cut edges). If $c_2(G)=2$ and $H_2$ is nontrivial, one can also observe that $v$ and $w$ have no common neighbour (as both $v$ and $w$ are not incident to any cut edges). Thus, in the above cases, we can add the edge $vw$ with weight 0. $G+vw$ is still triangle-free subcubic and $c_2(G+vw)<c_2(G)$.} By the induction hypothesis, $G+vw$ has a desired bisection, \AY{which corresponds} to a desired bisection of $G$. Now assume that $c_2(G)=2$ and $H_2$ is trivial. Let $xy$ be the bridge between $H_1$ and $H_2$, and $V(H_2)=\{y\}$. Since $H_1$ is not tight we have that $G$ has at least two vertices of degree 2, i.e. $|V_2(H_1)|\geq 2$. Since $|V(H_1)|$ is odd, $|V_2(H_1)|$ is odd and therefore at least 3. If $V_2(H_1)\setminus N_{H_1}[x]\neq \emptyset$, then let $x'$ be a vertex in this set. We add edge $x'y$ with weight 0. The resulting graph is 2-edge-connected and triangle-free, and every bisection of the resulting graph is a bisection of $G$ with the same weight. Therefore, we are done by Theorem \ref{thm:main}. Thus, we may assume that $V_2(H_1)=N_{H_1}[x]$. Then, we are done by Lemma \ref{lem:2component}.
	
Now, we assume that all leaf components of $G$ are trivial. Since $|V(G)|\geq 4$, all leaves are not adjacent to each other. If two leaves $v$ and $w$ in $G$ do not have a common neighbour, we can add an edge $vw$ with weight 0. Then, $G+vw$ is still a triangle-free subcubic graph, and we are done by applying the induction hypothesis. Thus, every pair of leaves have a common neighbour, and therefore $G\cong K_{1,3}$ as $|V(D)|\ge 4$ and $\Delta(G)\le 3$.

\2

{\bf Case 2:}  {\em There exists a 2-edge-connected component in $G$ that is tight.}

\2

{\bf Proof for Case 2.} Let $G_S$ be a tight 2-edge-connected component in $G$ and let $G_T=G[V(G) \setminus V(G_S)]$. Let $S=V(G_S)$ and let $T=V(G_T)$ and let
$uv$ be the unique $(S,T)$-edge in $G$, which exists as $G_S$ only has one vertex of degree two and all other vertices are of degree 3 and $G$ is connected, but not 2-edge-connected.
Note that $(S,T)$ partitions $V(G)$. Let $(X_S,Y_S)$ be a maximum weight bisection of $G_S$ and let $(X_T,Y_T)$ be a maximum weight  bisection of $G_T$. Note that $|S|$ is odd.
We now consider the following subcases, where subcases 2.C-2.E exhaust all possibilities (Subcases 2.A and 2.B are used in the other subcases).

\2

{\bf Subcase 2.A:} {\em $u$ belongs to the larger set of $\{X_S,Y_S\}$ and $G[T \cup \{u\}]$ or $G_T$ is a claw.}

\2

{\bf Proof for Subcase 2.A.} We first consider the case when $G_T$ is a claw.
In this case assume $T=\{v,x_1,x_2,y\}$ and $y$ is the center of the claw $G_T$ (recall that $uv$ is a bridge in $G$).
Without loss of generality assume that $u \in Y_S$ and note that in this case we have $|Y_S|>|X_S|$.
Consider the bisection $B=(X_S \cup \{v,x_1,x_2\}, Y_S \cup \{y\} )$ of $G$.
The weight of $B$ is at least $\theta \cdot w(G_S) + (w(G)-w(G_S)) \geq \theta \cdot w(G)$ as desired.
This completes the case when $G_T$ is a claw.

\2

Now assume that $G[T \cup \{u\}]$ is a claw.
Let $w_S^{large}$ denote the maximum weight of a bisection of $G_S$ where $u$ belongs to the larger set. 
By the statement of Subcase 2.A we note that $w_S^{large}$ is equal to the weight for the bisection $(X_S,Y_S)$. 
Let $w_S^{small}$ denote the maximum weight of a bisection of $G_S$ where $u$ belongs to the smaller set. 
Note that $w_S^{small} \leq w_S^{large}$. 
Let $p$ denote the probability of $u$ belonging to the larger set in the random bisection created in Theorem~\ref{TightProb}.
By Theorem~\ref{TightProb} we note that $p < 0.79 < (3-3\theta)$, as $\theta \approx 0.716959$. The following now holds.
\begin{equation}\label{(*)}
\hspace{1cm} (3-3\theta) \cdot w_S^{large} + (3\theta -2) \cdot w_S^{small} \geq p \cdot w_S^{large} + (1-p) \cdot w_S^{small}  \geq \theta \cdot w(G_S)
\end{equation}
Let $(X_S^s,Y_S^s)$ denote the maximum weight bisection of $G_S$ where $u$ belongs to the smaller set and recall that the weight of this bisection is $w_S^{small}$.
Let $V(G[T \cup \{u\}])= \{u,v,x_1,x_2\}$, where $v$ is the center of the claw.
Without loss of generality assume that $u \in Y_S$ and $u \in Y_S^s$.
Now consider the following bisections.
\[
\begin{array}{ccc}
B_1=(X_S^s \cup \{v\} ,Y_S^s \cup \{x_1,x_2\} ), & &   %
B_2=(X_S \cup \{x_1,x_2\}    ,Y_S \cup \{v\}), \\        
B_3=(X_S \cup \{v,x_2\}      ,Y_S \cup \{x_1\}),     & &   
B_4=(X_S \cup \{v,x_1\}      ,Y_S \cup \{x_2\}).  \\        
\end{array}
\]
 If we pick bisection $B_1$ with probability $3\theta -2$ (which is between $0$ and $1$ as $\theta \approx 0.716959$) 
and each of the other bisections with a probability of $1-\theta$, then 
we note that the average weight of our bisection will be the following, by (\ref{(*)}).
\[
\begin{array}{rclc} 
\multicolumn{3}{l}{\vspace{0.1cm} 3(1-\theta) \cdot w_S^{large} + (3\theta -2) \cdot w_S^{small} + (3\theta -2 + 2(1-\theta)) \cdot w(E(G[T \cup \{u\}]))} & \\ \vspace{0.1cm}
& = & (3-3\theta) \cdot w_S^{large} + (3\theta -2) \cdot w_S^{small} + \theta \cdot w(G[T \cup \{u\}]) & \\ \vspace{0.1cm}
& \geq & \theta \cdot w(G_S) + \theta \cdot w(G[T \cup \{u\}]) & \\
& = & \theta \cdot w(G) .& \\ 
\end{array}
\]
Therefore there must exist a bisection of $G$ with weight at least $\theta \cdot w(G)$.

\2

{\bf Subcase 2.B:} {\em $u$ belongs to the smaller set of $\{X_S,Y_S\}$ and $G_T$ is a claw or an isolated vertex.}

\2

{\bf Proof for Subcase 2.B.} Thus, $(X_S,Y_S)$ has weight $w_S^{small}$ \AY{(where $w_S^{small}$ is defined as in Subcase 2.A).} Without loss of generality assume that $u \in Y_S$, and therefore $|Y_S|<|X_S|$. First consider the case when $G_T$ is an isolated vertex, i.e., $V(G_T)=\{v\}$. Let $(X_S^l,Y_S^l)$ be the bisection with weight $w_S^{large}$ and $u\in X^l_S$. With probability $\theta$, we choose $(X^l_S,Y^l_S\cup\{v\})$. With probability $1-\theta$, we choose $(X_S,Y_S\cup\{v\})$. Thus, the expectation of the weight of our bisection will be the following:
	\[
	\theta\cdot w_S^{large}+(1-\theta)\cdot w_S^{small}+\theta\cdot w(uv)\geq 	p\cdot w_S^{large}+(1-p)\cdot w_S^{small}+\theta\cdot w(uv)\geq \theta \cdot w(G),
	\]
where the second inequality follows from $w_S^{small}\geq w_S^{large}$ and $p>0.74>\theta$.

Now consider the case when $G_T$ is a claw and 
assume that $T=\{v,x_1,x_2,y\}$ and $y$ is the center of the claw $G_T$ (recall that $uv$ is a bridge in $G$).
Consider the following four bisections of $G$.
\[
\begin{array}{ccc}
B_1=(X_S \cup \{y\} ,Y_S \cup \{v,x_1,x_2\}), & &   
B_2=(X_S \cup \{v,y\} ,Y_S \cup \{x_1,x_2\}), \\    
B_3=(X_S \cup \{v,x_2\} ,Y_S \cup \{y,x_1\}), & &   
B_4=(X_S \cup \{v,x_1\} ,Y_S \cup \{y,x_2\}).  \\    
\end{array}
\]
If we pick a bisections from the four above randomly and uniformly, then we note that every edge in $E(G) \setminus E(G_S) = \{uv,yv,yx_1,yx_2\}$ will be included
with probability $3/4$. Therefore, by induction, the following holds (as $\theta \leq 3/4$).
\[              
mb(G) \geq \theta \cdot w(G_S) + \frac{3}{4} \left( w(G)-w(G_S) \right) \geq \theta \cdot w(G).
\]

In the four remaining cases, without loss of generality, we assume that $|Y_S|\ge |X_S|$ and $|Y_T|\ge |X_T|.$

\2



%


{\bf Subcase 2.C:} {\em $u\in X_S$ and $v\in X_T$.}

\2

{\bf Proof for Subcase 2.C:} If $G_T$ is a claw we are done by Subcase 2.B, so assume this is not the case.
Let $X = X_S \cup Y_T$ and $Y=Y_S \cup X_T$ and consider the bisection $B=(X,Y)$. 
As neither $G_S$ nor $G_T$ are claws (as $G_S$ is tight and by Subcase 2.B) we can use induction
on $G_S$ and $G_T$ which implies the following.

\[
mb(G) \geq \theta \cdot w(G_S) + \theta \cdot w(G_T) + w(uv) \geq \theta \cdot w(G).
\]

\2

{\bf Subcase 2.D:} {\em $u\in Y_S$.}

\2

{\bf Proof for Subcase 2.D:} Let $(X_T^+,Y_T^+)$ be a maximum weight bisection of $G[T \cup \{u\}]$. By Subcase 2.A we may assume that $G[T \cup \{u\}]$
is not a claw. Without loss of generality assume that $u \in Y_S \cap Y_T^+$ (otherwise we can swap the names of $X_T^+$ and $Y_T^+$).
Let $X = X_S \cup X_T^+$ and $Y=Y_S \cup Y_T^+$.
As $|Y_S|=|X_S|+1$ (as $u\in Y_S$, $|S|$ is odd, and $u$ belongs to the larger set of $\{X_S,Y_S\}$) and
$|Y|=|Y_S|+|Y_T^+|-1$ (as $u$ is in both $Y_S$ and $Y_T^+$) we note that $(X,Y)$ is a bisection of $G$. By induction the following holds (as
$|S|<|V(G)|$ and $|T|+1<|V(G)|$).

\[
mb(G) \geq \theta \cdot w(G_S) + \theta \cdot w(G[T \cup \{u\}]) \geq \theta \cdot w(G).
\]

\2

{\bf Subcase 2.E:} {\em $v\in Y_T$.}

\2

{\bf Proof for Subcase 2.E.}  Let $(X_S^+,Y_S^+)$ be a maximum weight bisection of $G[S \cup \{v\}]$. 
\AY{Note that as $|S \cup \{v\}|$ is even we have $|X_S^+|=|Y_S^+|$.} As $G_S$ is tight we note that $G[S \cup \{v\}]$
is not a claw. By Subcase 2.B we note that $G_T$ is not a claw or a single vertex.
Without loss of generality assume that $v \in Y_S^+ \cap Y_T$ (otherwise we can swap the names of $X_S^+$ and $Y_S^+$ ).
Let $X = X_S^+ \cup X_T$ and $Y=Y_S^+ \cup Y_T$. As \AY{$|Y_T| \geq |X_T|$} (as $v\in Y_T$ and $v$ belongs to the larger set of $\{X_T,Y_T\}$) and
$|Y|=|Y_S^+|+|Y_T|-1$ (as $v$ is in both $Y_S^+$ and $Y_T$) we note that $(X,Y)$ is a bisection of $G$. By induction the following holds
(as $|S|+1<|V(G)|$ and $|T|<|V(G)|$).

\[
mb(G) \geq \theta \cdot w(G[S \cup \{v\}]) + \theta \cdot w(G_T) \geq \theta \cdot w(G).
\]
This completes the proof.
\end{proof}

\end{document}